\newcommand{\Z}{{\textsf{\textup{Z}}}}
\newtheorem{thm}{Theorem}
\newtheorem{cor}[thm]{Corollary}
\newtheorem{defi}[thm]{Definition}
\newtheorem{rem}[thm]{Remark}
\newtheorem{nota}[thm]{Notation}
\newtheorem{ack}[thm]{Acknowledgement}
\newtheorem*{tempo*}{Template}
\newcommand\be{\begin{equation}}
\newcommand\ee{\end{equation}} 
\def\bdefi{\begin{defi}\rm}
\def\edefi{\end{defi}}
\def\bnota{\begin{nota}\rm}
\def\enota{\end{nota}}
\def\FIVE{\Pi_{1}^{1}\text{-\textup{\textsf{CA}}}_{0}}
\def\SIX{\Pi_{2}^{1}\text{-\textsf{\textup{CA}}}_{0}}
\def\ZFC{\textup{\textsf{ZFC}}}
\def\({\textup{(}}
\def\){\textup{)}}
\def\bye{\end{document}}
\def\N{{\mathbb  N}}
\def\Q{{\mathbb  Q}}
\def\R{{\mathbb  R}}
\def\SS{\textup{\textsf{S}}}
\def\di{\rightarrow}
\def\asa{\leftrightarrow}
\def\ACA{\textup{\textsf{ACA}}}
\def\QFAC{\textup{\textsf{QF-AC}}}
\def\NIN{\textup{\textsf{NIN}}}
\def\eps{\varepsilon}
\def\osc{\textup{\textsf{osc}}}
\numberwithin{equation}{section}
\numberwithin{thm}{section}
\begin{document}
\title{Exploring the abyss in Kleene's computability theory}
\author{Sam Sanders}
\address{Department of Philosophy II, RUB Bochum, Germany}
\email{sasander@me.com}
\keywords{Kleene computability theory, fan functional, non-normal functionals, S1-S9, real analysis}
\begin{abstract}
Kleene's computability theory based on the S1-S9 computation schemes constitutes a model for \emph{computing with objects of any finite type} and extends Turing's `machine model' which formalises \emph{computing with real numbers}.  A fundamental distinction in Kleene's framework is between \emph{normal} and \emph{non-normal} functionals where the former compute the associated \emph{Kleene quantifier} $\exists^{n}$ and the latter do not.  Historically, the focus was on normal functionals, but recently new \emph{non-normal} functionals have been studied based on well-known theorems, the weakest among which seems to be the \emph{uncountability of the reals}. 
These new non-normal functionals are fundamentally different from historical examples like Tait's fan functional: the latter is computable from $\exists^{2}$, while the former are computable in $\exists^{3}$ but not in weaker oracles.  
Of course, there is a great divide or abyss separating $\exists^{2}$ and $\exists^{3}$ and we identify \emph{slight} variations of our new non-normal functionals that are again computable in $\exists^{2}$, i.e.\ fall on different sides of this abyss.  
Our examples are based on mainstream mathematical notions, like \emph{quasi-continuity}, \emph{Baire classes}, \emph{bounded variation}, and \emph{semi-continuity} from real analysis.  
\end{abstract}

%
\maketitle              
%

\section{Introduction}

\subsection{Motivation and overview}\label{intro}
Computability theory is a discipline in the intersection of theoretical computer science and mathematical logic where the fundamental question is as follows:
\begin{center}
\emph{given two mathematical objects $X$ and $Y$, is $Y$ computable from $X$ in principle?}

\end{center} 
In the case where $X $ and $Y$ are real numbers, Turing's famous `machine' model (\cite{tur37}) is the standard approach to this question, i.e.\ `computation' is interpreted in the sense of Turing machines.  
To formalise computation involving (more) abstract objects, like functions on the real numbers or well-orderings of the reals, Kleene introduced his S1-S9 computation schemes in \cites{kleeneS1S9}.
Dag Normann and the author have recently introduced (\cite{dagsamXIII}) a version of the lambda calculus involving fixed point operators that exactly captures S1-S9 and accommodates partial objects. 
Henceforth, any reference to computability is to be understood in Kleene's framework and (if relevant) the extension from \cite{dagsamXIII}.

\medskip

A fundamental distinction in Kleene's framework is between \emph{normal} and \emph{non-normal} functionals where the former compute the associated \emph{Kleene quantifier} $\exists^{n}$ and the latter do not (see Section \ref{kelim}).  Historically, the focus was on normal functionals in that only few examples of \emph{natural} non-normal functionals were even known.  The first such example was Tait's \emph{fan functional}, which computes a modulus of uniform continuity on input a continuous function on $2^{\N}$ (\cite{dagtait}).  We discuss the history of the fan functional in Remark \ref{FAN}.   

\medskip

Recently, Dag Normann and the author have identified \emph{new} non-normal functionals based on mainstream theorems like e.g.\ the \emph{Heine-Borel theorem}, the \emph{Jordan decomposition theorem}, and the \emph{uncountability of $\R$}  (\cites{dagsamV,dagsamVII, dagsamIX, dagsamXII, dagsamX,dagsamXIII}).  These non-normal functionals are \emph{very different} as follows: Tait's fan functional is computable in $\exists^{2}$, making it rather tame; by contrast the following non-normal operation is not computable in any $\SS_{k}^{2}$, where the latter decides $\Pi_{k}^{1}$-formulas (see Section \ref{kelim}).
\be\label{ting}
\text{\emph{Given $Y:[0,1]\di \N$, find $x, y\in \R $ such that $x\ne_{\R} y$ and $Y(x)=_{\N}Y(y)$.}}
\ee
Clearly, this operation witnesses the basic fact there is no injection from the unit interval to the naturals.  
The operation in \eqref{ting} \emph{can} be performed by $\exists^{3}$, which follows from some of the many proofs that $\R$ is uncountable.
Essentially all the non-normal functionals studied in \cites{dagsamV,dagsamVII, dagsamIX, dagsamXII, dagsamX,dagsamXIII} compute the operation in \eqref{ting}, or some equally hard variation.

\medskip

In light of the previous, there are two classes of non-normal functionals: those computable in $\exists^{2}$, like Tait's fan functional, and those computable from $\exists^{3}$ but not from weaker oracles, like the operation in \eqref{ting}.  Given the difference in computational power between $\exists^{2}$ and $\exists^{3}$, there would seem to be a great divide\footnote{Rathjen states in \cite{rathjenICM} that $\SIX$ \emph{dwarfs} $\FIVE$ and Martin-L\"of talks of a \emph{chasm} and \emph{abyss} between these two systems in \cite{loefenlei}.  Since the difference between $\exists^{2}$ and $\exists^{3}$ amounts to the difference between $\ACA_{0}$ and $\Z_{2}$ (see \cite{simpson2} for these systems), we believe `abyss' to be apt.} -some might say abyss- between these two classes.
In this paper, we identify certain \emph{closely related} non-normal functionals that fall on different sides of this abyss.  In particular, we obtain the following results where `basic operations' include finding a point of continuity, the supremum, or various modulus functions (see Def.\ \ref{modje} and \ref{varvar}). 
\begin{itemize}
\item Basic operations on \emph{quasi-continuous} functions can be done using $\exists^{2}$; the same operations on the closely related \emph{cliquish} functions are computable in $\exists^{3}$ but not in weaker oracles like $\SS_{k}^{2}$ (Section \ref{diff}).
\item Basic operations on \emph{Baire 2} functions are computable in $\exists^{3}$ but not in weaker oracles; the same operations are computable in resp.\ $\exists^{2}$ and $\SS^{2}$ for the closely related notion of resp.\  Baire 1 and \emph{effectively} Baire 2 functions (Section \ref{SC2}); representations of Baire functions play a central role. 
\item Basic operations on \emph{semi-continuous} functions can be done using $\exists^{3}$ but not using weaker oracles, even given an oscillation function (Def.~\ref{oscf}); these operations are computable in $\exists^{2}$ if we assume a suitable modulus (Section~\ref{SC}).
\item Basic operations on \emph{bounded variation} ($BV$ for short) functions can be done using $\exists^{3}$ but not using weaker oracles; the same operations are computable in $\exists^{2}$ if we restrict to \emph{normalised} $BV$-functions (Section \ref{trank}).
\end{itemize}
Finally, we briefly sketch Kleene's framework in Section \ref{prelim}.  Required axioms and definitions are introduced in Sections \ref{lll} and \ref{cdef}.
Preliminary results have been published in \cite{samcie23}, of which the below constitutes a considerable extension. 
We finish this section with a remark on the history of the fan functional.
\begin{rem}[The fan functional]\label{FAN}\rm
To the best of our (historical) knowledge, Kreisel introduced the fan functional in a lecture series in Stanford in 1958.  He suggested to the audience, which included Bill Tait, that the fan functional might be an example of a functional that is recursive, i.e.\ has a computable associate, but is not S1-S9-computable.  Tait later proved Kreisel's conjecture but the proof had a gap, which was discovered by Gandy.  
The latter then gave a correct proof  in a letter to Kreisel.  All this came to the fore during the writing of \cite{dagtait}.  
\end{rem}

\subsection{Preliminaries and definitions}\label{kelim}
We briefly introduce Kleene's \emph{higher-order computability theory} in Section~\ref{prelim}.
We introduce some essential axioms (Section~\ref{lll}) and definitions (Section~\ref{cdef}).  A full introduction may be found in e.g.\ \cite{dagsamX}*{\S2} or \cite{longmann}.

\medskip

Since Kleene's computability theory borrows heavily from type theory, we use common notations from the latter; for instance, the natural numbers are type $0$ objects, denoted $n^{0}$ or $n\in \N$.  
Similarly, elements of Baire space are type $1$ objects, denoted $f\in \N^{\N}$ or $f^{1}$.  Mappings from Baire space $\N^{\N}$ to $\N$ are denoted $Y:\N^{\N}\di \N$ or $Y^{2}$. 
An overview of this kind of notations can be found in e.g.\ \cite{longmann, dagsamXIII}. 

\subsubsection{Kleene's computability theory}\label{prelim}
Our main results are in computability theory and we make our notion of `computability' precise as follows.  
\begin{enumerate}
\item[(I)] We adopt $\ZFC$, i.e.\ Zermelo-Fraenkel set theory with the Axiom of Choice, as the official metatheory for all results, unless explicitly stated otherwise.
\item[(II)] We adopt Kleene's notion of \emph{higher-order computation} as given by his nine clauses S1-S9 (see \cite{longmann}*{Ch.\ 5} or \cite{kleeneS1S9}) as our official notion of `computable' involving total objects.
\end{enumerate}
We mention that S1-S8 are rather basic and merely introduce a kind of higher-order primitive recursion with higher-order parameters. 
The real power comes from S9, which essentially hard-codes the \emph{recursion theorem} for S1-S9-computability in an ad hoc way.  
By contrast, the recursion theorem for Turing machines is derived from first principles in \cite{zweer}.

\medskip

On a historical note, it is part of the folklore of computability theory that many have tried (and failed) to formulate models of computation for objects of all finite types and in which one derives the recursion theorem in a natural way.  For this reason, Kleene ultimately introduced S1-S9, which 
were initially criticised for their aforementioned ad hoc nature, but eventually received general acceptance.  
Now, Dag Normann and the author have introduced a new computational model based on the lambda calculus in \cite{dagsamXIII} with the following properties:
\begin{itemize}
\item S1-S8 is included while the `ad hoc' scheme S9 is replaced by more natural (least) fixed point operators,
\item the new model exactly captures S1-S9 computability for total objects,
\item the new model accommodates `computing with partial objects',
\item the new model is more modular than S1-S9 in that sub-models are readily obtained by leaving out certain fixed point operators.
\end{itemize}
We refer to \cites{longmann, dagsamXIII} for a thorough overview of higher-order computability theory.
We do mention the distinction between `normal' and `non-normal' functionals  based on the following definition from \cite{longmann}*{\S5.4}. 
We only make use of $\exists^{n}$ for $n=2,3$, as defined in Section \ref{lll}.
\bdefi\label{norma}
For $n\geq 2$, a functional of type $n$ is called \emph{normal} if it computes Kleene's quantifier $\exists^{n}$ following S1-S9, and \emph{non-normal} otherwise.  
\edefi
\noindent
It is a historical fact that higher-order computability theory, based on Kleene's S1-S9 schemes, has focused primarily on the world of \emph{normal} functionals; this opinion can be found \cite{longmann}*{\S5.4}.  
Nonetheless, we have previously studied the computational properties of new \emph{non-normal} functionals, namely those that compute the objects claimed to exist by:
\begin{itemize}
\item covering theorems due to Heine-Borel, Vitali, and Lindel\"of (\cites{dagsamV}),
\item the Baire category theorem (\cite{dagsamVII, samcsl23}),
\item local-global principles like \emph{Pincherle's theorem} (\cite{dagsamV}),
\item weak fragments of the Axiom of (countable) Choice (\cite{dagsamIX}),
\item the Jordan decomposition theorem and related results (\cites{dagsamXII, dagsamXIII}),
\item the uncountability of $\R$ (\cites{dagsamX, dagsamXI}).
\end{itemize}
Finally, the first example of a non-computable non-normal functional, Tait's fan functional (\cite{dagtait}), is rather tame: it is computable in $\exists^{2}$. 
By contrast, the functionals based on the previous list, including the operation \eqref{ting} from Section~\ref{intro}, are computable in $\exists^{3}$ but not computable in any $\SS_{k}^{2}$, where the later decides $\Pi_{k}^{1}$-formulas (see Section \ref{lll} for details).

\subsubsection{Some comprehension functionals}\label{lll}
In Turing-style computability theory, computational hardness is measured in terms of where the oracle set fits in the well-known comprehension hierarchy.  
For this reason, we introduce some axioms and functionals related to \emph{higher-order comprehension} in this section.
We are mostly dealing with \emph{conventional} comprehension here, i.e.\ only parameters over $\N$ and $\N^{\N}$ are allowed in formula classes like $\Pi_{k}^{1}$ and $\Sigma_{k}^{1}$.  

\medskip

First of all, the functional $\varphi^{2}$, also called \emph{Kleene's quantifier $\exists^{2}$}, as in $(\exists^{2})$ is discontinuous\footnote{Note that $\varphi(11\dots)=1$ and $\varphi(g)=0$ for $g\ne_{1} 11\dots$ by the definition of $(\exists^{2})$, i.e.\ $\lambda f.\varphi(f)$ is discontinuous at $f=11\dots$ in the usual `epsilon-delta' sense.} at $f=11\dots$; in fact, $\exists^{2}$ is (computationally) equivalent to the existence of $F:\R\di\R$ such that $F(x)=1$ if $x>_{\R}0$, and $0$ otherwise via Grilliot's trick (see \cite{kohlenbach2}*{\S3}).
\be\label{muk}\tag{$\exists^{2}$}
(\exists \varphi^{2}\leq_{2}1)(\forall f^{1})\big[(\exists n)(f(n)=0) \asa \varphi(f)=0    \big]. 
\ee
Related to $(\exists^{2})$, the functional $\mu^{2}$ in $(\mu^{2})$ is called \emph{Feferman's $\mu$} (\cite{avi2}).
\begin{align}\label{mu}\tag{$\mu^{2}$}
(\exists \mu^{2})(\forall f^{1})\big(\big[ (\exists n)(f(n)=0) \di [f(\mu(f))=0&\wedge (\forall i<\mu(f))(f(i)\ne 0) \big]\\
& \wedge [ (\forall n)(f(n)\ne0)\di   \mu(f)=0] \big). \notag
\end{align}
We have $(\exists^{2})\asa (\mu^{2})$ over Kohlenbach's base theory (\cite{kohlenbach2}), while $\exists^{2}$ and $\mu^{2}$ are also computationally equivalent.  
Hilbert and Bernays formalise considerable swaths of mathematics using only $\mu^{2}$ in \cite{hillebilly2}*{Supplement IV}.

\medskip
\noindent
Secondly, the functional $\SS^{2}$ in $(\SS^{2})$ is called \emph{the Suslin functional} (\cite{kohlenbach2}).
\be\tag{$\SS^{2}$}
(\exists\SS^{2}\leq_{2}1)(\forall f^{1})\big[  (\exists g^{1})(\forall n^{0})(f(\overline{g}n)=0)\asa \SS(f)=0  \big].
\ee
By definition, the Suslin functional $\SS^{2}$ can decide whether a $\Sigma_{1}^{1}$-formula as in the left-hand side of $(\SS^{2})$ is true or false.   
We similarly define the functional $\SS_{k}^{2}$ which decides the truth or falsity of $\Sigma_{k}^{1}$-formulas.
%
We note that the Feferman-Sieg operators $\nu_{n}$ from \cite{boekskeopendoen}*{p.\ 129} are essentially $\SS_{n}^{2}$ strengthened to return a witness (if existent) to the $\Sigma_{n}^{1}$-formula at hand.  

\medskip

\noindent
Thirdly, the following functional $E^{3}$ clearly computes $\exists^{2}$ and $\SS_{k}^{2}$ for any $k\in \N$:
\be\tag{$\exists^{3}$}
(\exists E^{3}\leq_{3}1)(\forall Y^{2})\big[  (\exists f^{1})(Y(f)=0)\asa E(Y)=0  \big].
\ee
The functional from $(\exists^{3})$ is also called \emph{Kleene's quantifier $\exists^{3}$}, and we use the same -by now obvious- convention for other functionals.  
Hilbert and Bernays introduce a functional $\nu^{3}$ in \cite{hillebilly2}*{Supplement IV}, and the latter is essentially $\exists^{3}$ which also provides a witness like the aforementioned functional $\nu_{n}$ does.

\medskip

In conclusion, the operation \eqref{ting} from Section \ref{intro} is computable in $\exists^{3}$ but not in any $\SS_{k}^{2}$, as established in \cite{dagsamXI}.
Many non-normal functionals exhibit the same `computational hardness' and we merely view this as support for the development of a separate scale for classifying non-normal functionals.    

\subsubsection{Some definitions}\label{cdef}
We introduce some definitions needed in the below, mostly stemming from mainstream mathematics.
We note that subsets of $\R$ are given by their characteristic functions (Definition \ref{char}), where the latter are common in measure and probability theory.

\medskip
\noindent
First of all, we make use of the usual definition of (open) set, where $B(x, r)$ is the open ball with radius $r>0$ centred at $x\in \R$.
\bdefi[Set]\label{char}~
\begin{itemize}
\item Subsets $A$ of $ \R$ are given by their characteristic function $F_{A}:\R\di \{0,1\}$, i.e.\ we write $x\in A$ for $ F_{A}(x)=1$ for all $x\in \R$.
\item We write `$A\subset B$' if we have $F_{A}(x)\leq F_{B}(x)$ for all $x\in \R$. 
\item A subset $O\subset \R$ is \emph{open} in case $x\in O$ implies that there is $k\in \N$ such that $B(x, \frac{1}{2^{k}})\subset O$.
\item A subset $C\subset \R$ is \emph{closed} if the complement $\R\setminus C$ is open. 
\item A subset $A\subset \R$ is \emph{simply open} in case $A=O\cup N$ where $O$ is open and $N$ is nowhere dense, i.e.\ the closure of $N$ has empty interior. 
\end{itemize}
\edefi
\noindent
No computational data/additional representation is assumed in the previous definition.  
As established in \cites{dagsamXII, dagsamXIII}, one readily comes across closed sets in basic real analysis (Fourier series) that come with no additional representation. 

\medskip

Secondly, the following representation of open sets is studied in \cite{dagsamVII}. 
\bdefi[R2-open]~
\begin{itemize}
\item A subset $O\subset \R$ is \emph{R2-open} in case there is $Y:\R\di \R$ such that $x\in O$ implies that $Y(x)>0$ and $B(x, Y(x))\subset O$.
\item A subset $C\subset \R$ is \emph{R2-closed} if the complement $\R\setminus C$ is R2-open. 
\end{itemize}
The function $Y:\R\di \R$ is called an \emph{R2-representation of $O$}. 
\edefi
As established in \cite{dagsamVII}*{\S6-7}, the R2-representation is strictly weaker than the usual RM-representation (\cite{simpson2}*{II.5.6}) but still 
yields a computable version of the Baire category theorem.

\medskip

Thirdly, the following sets are often crucial in proofs in real analysis. 
\bdefi
The sets $C_{f}$ and $D_{f}$ respectively gather the points where $f:\R\di \R$ is continuous and discontinuous.
\edefi
One problem with $C_{f}, D_{f}$ is that the definition of continuity involves quantifiers over $\R$.  
In general, deciding whether a given $\R\di \R$-function is continuous at a given real can be done by $\exists^{3}$ from Section \ref{lll} but not by weaker oracles.
For these reasons, the sets $C_{f}, D_{f}$ do exist in general, but are not computable in e.g.\ $\exists^{2}$.  We show that for quasi-continuous and semi-continuous functions, these sets are definable in $\exists^{2}$ (Theorems \ref{plonkook} and \ref{timtam}). 

\medskip

Fourth, to define $C_{f}$ using $\exists^{2}$, one can also (additionally) assume the existence of the oscillation function $\osc_{f}:\R\di \R$ as in Def.\ \ref{oscf}.  
Indeed, the continuity of $f$ at $x\in \R$ is then equivalent to the \emph{arithmetical} formula $\osc_{f}(x)=_{\R}0$. 
\bdefi[Oscillation function]\label{oscf}
For any $f:\R\di \R$, the associated \emph{oscillation functions} are defined as follows: $\osc_{f}([a,b]):= \sup _{{x\in [a,b]}}f(x)-\inf _{{x\in [a,b]}}f(x)$ and $\osc_{f}(x):=\lim _{k \di \infty }\osc_{f}(B(x, \frac{1}{2^{k}}) ).$
\edefi
We note that Riemann and Hankel already considered the notion of oscillation in the context of Riemann integration (\cites{hankelwoot, rieal}).  

\section{Main results}
We establish the results sketched in Section \ref{intro}.  In particular, we identify basic (and advanced) operations on \emph{quasi-continuous} (Section \ref{diff}), \emph{Baire 1 and effectively Baire 2} (Section \ref{SC2}), \emph{semi-continuous} (Section \ref{SC}), and \emph{bounded variation} (Section \ref{trank}) functions that are computable in $\exists^{2}$.  We also show that the same operations on the closely related \emph{cliquish}, \emph{Baire 2}, and \emph{normalised bounded variation} functions are computable in $\exists^{3}$ but not in weaker oracles like $\SS_{k}^{2}$.  Thus, while these operations are `mathematically close', there is a huge divide or abyss between their computational properties.  A possible explanation is given in Remark \ref{expla}.
\subsection{Quasi-continuity and related notions}\label{diff}
We study the notion of \emph{quasi-continuity} and the closely related concept of \emph{cliquishness}, as in Definition \ref{klop}. 
As discussed below, the latter is essentially the closure of the former under sums.  
Nonetheless, basic properties concerning quasi-continuity give rise to functionals computable in $\exists^{2}$ by Theorem \ref{plonkook}, while the same functionals generalised to cliquish functions are not computable in any $\SS_{k}^{2}$ by Theorem \ref{plonkook2}.  In particular, quasi-continuity and cliquishness yield (basic) functionals on either side of the abyss.  

\medskip

First of all, Def.\ \ref{klop} has some historical background: Baire has shown that separately continuous $\R^{2}\di \R$ are \emph{quasi-continuous} in $\R^{2}$;  he mentions in \cite{beren2}*{p.\ 95} that the latter notion (without naming it) was suggested by Volterra.  
\bdefi\label{klop} For $f:[0,1]\di \R$, we have the following definitions:
\begin{itemize}
\item $f$ is \emph{quasi-continuous} at $x_{0}\in [0, 1]$ if for $ \epsilon > 0$ and any open neighbourhood $U$ of $x_{0}$, 
there is non-empty open $ G\subset U$ with $(\forall x\in G) (|f(x_{0})-f(x)|<\eps)$.
\item $f$ is \emph{simply continuous} if for open $G\subset \R$, the set $f^{-1}(G)$ is simply open. 
\item $f$ is \emph{cliquish} at $x_{0}\in [0, 1]$ if for $ \epsilon > 0$ and any open neighbourhood $U$ of $x_{0}$, 
there is a non-empty open ${ G\subset U}$ with $(\forall  y, z\in G) (|f(y)-f(z)|<\eps)$.
\end{itemize}
\edefi
\noindent
These notions have nice technical and conceptual properties, as follows.
\begin{itemize}
\item The class of cliquish functions is exactly the class of sums of quasi-continuous functions (\cite{bors, quasibor2, malin}). In particular, cliquish functions are closed under sums while quasi-continuous ones are not.
\item The pointwise limit (if it exists) of quasi-continuous functions, is always cliquish (\cite{holausco}*{Cor.\ 2.5.2}).
\item The set $C_{f}$ is dense in $\R$ if and only if $f:\R\di \R$ is cliquish (see \cites{bors, dobo}); the former notion was called \emph{pointwise discontinuous} by Hankel (1870, \cite{hankelwoot}).
\item The simply continuous functions on $\R$ yield a class intermediate between the quasi-continuous and cliquish ones (see \cite{bors}). 
\end{itemize}
Moreover, quasi-continuous functions can already be quite `wild': if $\mathfrak{c}$ is the cardinality of $\R$, there are $2^{\mathfrak{c}}$ non-measurable quasi-continuous $[0,1]\di \R$-functions and $2^{\mathfrak{c}}$ non-Borel measurable  quasi-continuous $[0,1]\di [0,1]$-functions (see \cite{holaseg}).  
Thomae's function (\cite{thomeke}*{\S20}) as in \eqref{thomae} is cliquish but not simply continuous:
\be\label{thomae}
T(x):=
\begin{cases} 
0 & \textup{if } x\in \R\setminus\Q\\
\frac{1}{q} & \textup{if $x=\frac{p}{q}$ and $p, q$ are co-prime} 
\end{cases}.
\ee
Next, \emph{modulus functions} are an important tool in the (computational) study of analysis.  Intuitively, a modulus outputs the `$\delta>0$' on input the `$\eps>0$' and other data from an `epsilon-delta' definition.  
Moduli of continuity are well-known and central to computability theory, Reverse Mathematics (RM hereafter), and constructive analysis.  We shall make use of the following constructs in relation to Definition \ref{klop}, where we assume that $C_{f}\ne\emptyset$ in the first item.    
\bdefi[Moduli]\label{modje}
For $f:[0,1]\di \R$, we have the following definitions:
\begin{itemize}
\item A function $F:(\R\times \N)\di \N$ is a \emph{modulus of continuity} if 
\be\label{tok1}\textstyle
(\forall k\in \N, x\in C_{f},y\in [0,1] ) ( |x-y|<\frac{1}{2^{F(x, k))}}\di |f(x)-f(y)|<\frac{1}{2^{k}}).
\ee
\item Any $F:(\R\times \N^{2})\di \Q^{2}$ is a \emph{modulus of quasi-continuity} if for all $x\in [0,1]$ and $k,N\in \N$, we have $(F(x,k, N)(0), F(x, k, N)(1)) \subset B(x, \frac{1}{2^{N}})$  and 
\be\label{tok2}\textstyle
 (\forall  y\in (F(x,k, N)(0), F(x, k, N)(1))) ( |f(x)-f(y)|<\frac{1}{2^{k}}).
\ee
\item Any $F:(\R\times \N^{2})\di \Q^{2}$ is a \emph{modulus of cliquishness} if for all $x\in [0,1]$ and $k,N\in \N$, we have $(F(x,k, N)(0), F(x, k, N)(1)) \subset B(x, \frac{1}{2^{N}})$  and 
\be\label{tok3}\textstyle
 (\forall  y, z\in (F(x,k, N)(0), F(x, k, N)(1))) ( |f(y)-f(z)|<\frac{1}{2^{k}}).
\ee
\end{itemize}
\edefi
We note that for continuous $f:[0,1]\di \R$, \eqref{tok1} is exactly the standard definition while \eqref{tok2} and \eqref{tok3} merely expresses that $F$ witnesses the usual definition of quasi-continuity and cliquishness. 

\medskip

Secondly, we show that $\exists^{2}$ suffices to witness basic properties of quasi-continuous functions. 
Hence, the associated functionals fall in the same class as Tait's {fan functional}.   We call a set `RM-open' if it is given via an RM-code (see \cite{simpson2}*{II.5.6}), i.e.\ a sequence of rational open balls.   
\begin{thm}\label{plonkook}
For quasi-continuous $f:[0,1]\di \R$, we have the following: 
\begin{enumerate}
\renewcommand{\theenumi}{\alph{enumi}}
\item the set $C_{f}$ is definable using $\exists^{2}$ and the latter computes some $x\in C_{f}$,\label{q1}
\item there is a sequence $(O_{n})_{n\in \N}$ of RM-open sets, definable in $\exists^{2}$, such that $C_{f}=\cap_{n\in \N}O_{n}$,\label{q2}
\item the oscillation function $\osc_{f}:[0,1]\di \R$ is computable in $\exists^{2}$,\label{q3}
\item the supremum $\sup_{x\in [p,q]}f(x)$ is computable in $\exists^{2}$ for any $p, q \in \Q\cap [0,1]$, \label{q4}
\item a modulus of continuity is definable using $\exists^{2}$,\label{q5}
\item a modulus of quasi-continuity is definable using $\exists^{2}$,\label{q6}
\item for quasi-continuous $\Psi:[0,1]\di \R^{+}$, $\exists^{2}$ computes a finite sub-covering of the uncountable covering $\cup_{x\in [0,1]}B(x, \Psi(x))$,\label{q7}
\item for quasi-continuous Riemann integrable $f:[0,1]\di [0,1]$ with $\int_{0}^{1}f(x)dx=0$, $\exists^{2}$ computes $x\in [0,1]$ such that $f(x)=0$,\label{q8}
\item \textsf{\textup{(FTC)}} for quasi-continuous $f:[0,1]\di \R$ with $F(x):=\lambda x.\int_{0}^{x}f(t)dt$, $\exists^{2}$ computes $x_{0}\in (0,1)$ where such that $F'(x_{0})=f(x_{0})$.\label{q9}
\end{enumerate}
\end{thm}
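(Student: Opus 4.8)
The plan is to extract from quasi-continuity one structural fact that powers almost every item: for a quasi-continuous $f$, suprema, infima, and hence oscillations over real intervals collapse to the same quantities over the rationals. Concretely I would first prove that for quasi-continuous $f:[0,1]\di\R$ and rationals $a<b$,
\[
\sup_{x\in(a,b)}f(x)=\sup_{q\in(a,b)\cap\Q}f(q)\quad\text{and}\quad \inf_{x\in(a,b)}f(x)=\inf_{q\in(a,b)\cap\Q}f(q).
\]
The only nontrivial inequality, $\sup_{(a,b)}f\le\sup_{(a,b)\cap\Q}f$, holds because given $y\in(a,b)$ and $\eps>0$, quasi-continuity at $y$ with neighbourhood $(a,b)$ yields a non-empty open $G\subset(a,b)$ on which $f>f(y)-\eps$, and $G$ contains a rational $q$, so $f(q)>f(y)-\eps$. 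Since $\exists^{2}$ computes the supremum and infimum of any bounded sequence of reals — deciding $s<\sup_{n}r_{n}$ is the $\Sigma_{1}^{0}$-condition $(\exists n)(r_{n}>s)$ — this identity makes $\sup_{x\in[p,q]}f$ and $\inf_{x\in[p,q]}f$ computable in $\exists^{2}$ (throw in the two endpoint values), which is \eqref{q4}, and hence makes $\osc_{f}([a,b])$ and the decreasing limit $\osc_{f}(x)=\lim_{k}\osc_{f}(B(x,\tfrac{1}{2^{k}}))$ computable in $\exists^{2}$, which is \eqref{q3}. I would assume $f$ locally bounded so these values are real.

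Everything about the continuity set then reduces to the \emph{arithmetical} predicate $\osc_{f}(x)=0$. For \eqref{q1}, as $\osc_{f}$ is computable and $\exists^{2}$ decides the $\Pi_{1}^{0}$-statement $\osc_{f}(x)=_{\R}0$, the characteristic function of $C_{f}=\{x:\osc_{f}(x)=0\}$ is definable in $\exists^{2}$; to produce a point of $C_{f}$ I would run an effective nested-interval construction, using that quasi-continuity implies cliquishness to find inside any rational interval a rational subinterval of oscillation $<\tfrac{1}{2^{n}}$ (such a subinterval exists by cliquishness and is located by $\mu^{2}$ from the computable $\osc_{f}$), and take the limit point, whose oscillation is $0$. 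For \eqref{q2} I would present $O_{n}:=\{x:\osc_{f}(x)<\tfrac{1}{2^{n}}\}$ by $\exists^{2}$-enumerating all rational balls $B(q,r)$ with $\osc_{f}(\overline{B(q,r)})<\tfrac{1}{2^{n}}$; their union is $O_{n}$ and $\bigcap_{n}O_{n}=C_{f}$. The moduli arise from the same search: for \eqref{q5} take $F(x,k)$ to be the least $N$ with $\osc_{f}(B(x,\tfrac{1}{2^{N}}))<\tfrac{1}{2^{k}}$, and for \eqref{q6} let $\mu^{2}$ return the first rational subinterval of $B(x,\tfrac{1}{2^{N}})$ on which $\sup f<f(x)+\tfrac{1}{2^{k}}$ and $\inf f>f(x)-\tfrac{1}{2^{k}}$, quasi-continuity guaranteeing one exists.

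For the covering item \eqref{q7} the new ingredient is that rational centres already suffice: given $x$, quasi-continuity at $x$ with $\eps=\Psi(x)/2$ and $U=B(x,\Psi(x)/2)$ yields open $G\subset U$ with $\Psi>\Psi(x)/2$ on $G$, hence a rational $q\in G$ with $|x-q|<\Psi(x)/2<\Psi(q)$, so $x\in B(q,\Psi(q))$. Thus $\{B(q,\Psi(q)):q\in\Q\cap[0,1]\}$ is a countable subcover, and $\exists^{2}$ extracts a finite one by letting $\mu^{2}$ find the least $N$ for which the first $N$ of these rational intervals cover $[0,1]$ — a finite interval-covering check with computable endpoints, decidable by $\exists^{2}$ — the existence of $N$ being ordinary Heine–Borel. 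Finally \eqref{q8} and \eqref{q9} both reduce to a single computed continuity point $x_{0}\in C_{f}\cap(0,1)$ from \eqref{q1}: for \eqref{q8}, if $f\ge0$, $\int_{0}^{1}f=0$, and $f$ were positive at a continuity point, then $f$ would exceed a positive constant on a neighbourhood and force a positive integral, so $f(x_{0})=0$; for \eqref{q9}, the fundamental theorem of calculus gives $F'(x_{0})=f(x_{0})$ at any continuity point of the (assumed integrable) $f$.

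The hard part is the opening identity and its packaging into a computable $\osc_{f}$: once real suprema, infima, and oscillation are computable in $\exists^{2}$, the statements about $C_{f}$ and the moduli are bookkeeping over the predicate $\osc_{f}(x)=0$. The one item not reducible to oscillation is \eqref{q7}, whose genuinely new step is the verification that rational centres still cover $[0,1]$, after which the finite-subcover extraction is routine. Throughout I would handle boundary and degenerate equalities — membership in an open ball, coverage at shared endpoints — by noting they are $\Sigma_{1}^{0}$ or $\Pi_{1}^{0}$ and thus within $\exists^{2}$, perturbing radii by an arbitrarily small rational where a strict comparison is needed.
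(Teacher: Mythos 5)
Your proposal is correct and follows essentially the same route as the paper: the central identity that suprema, infima, and hence oscillations of a quasi-continuous function coincide with their restrictions to rational arguments is exactly the rational-versus-real quantifier equivalence the paper exploits, and your nested-interval search for a continuity point is just an inlined version of the effective Baire category argument that the paper cites for RM-coded open sets. The remaining items match the paper's proof step for step -- with your treatment of the modulus of quasi-continuity in fact marginally more direct, since the sup/inf identity already controls all real points of the chosen rational subinterval, making the paper's detour through a continuity point and a triangle inequality unnecessary.
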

\begin{proof}
Fix quasi-continuous $f:[0,1]\di \R$ and use $\exists^{2}$ to define $x\in O_{m}$ in case 
\be\label{obvio}\textstyle
(\exists N_{0}\in \N)(\forall q, r\in  B(x, \frac{1}{2^{N_{0}}})\cap \Q)( |f(q)-f(r)|\leq \frac{1}{2^{m}}  ).  
\ee
By (the definition of) quasi-continuity, the formula \eqref{obvio} is equivalent to 
\be\label{ferengi}\textstyle
(\exists N_{1}\in \N)(\forall w, z\in  B(x, \frac{1}{2^{N_{1}}}))( |f(w)-f(z)|\leq \frac{1}{2^{m}}  ),
\ee
where we note that the equivalence remains valid if $N_{0}=N_{1}$ in \eqref{obvio} and \eqref{ferengi}.  
Now apply $\mu^{2}$ to \eqref{obvio} to obtain $G:([0,1]\times \N)\di \N$ such that for all $x\in [0,1]$ and $m\in \N$, we have
\be\label{kiop}\textstyle
x\in O_{m}\di (\forall w, z\in  B(x, \frac{1}{2^{G(x, m)}}))( |f(w)-f(z)|\leq \frac{1}{2^{m}}  ).
\ee
Hence, $x\in O_{m}\di B(x, \frac{1}{2^{G(x, m)}})\subset O_{m}$, witnessing that $O_{m}$ is open.  
Clearly, we also have $O_{m}=\cup_{q\in \Q}B(q, \frac{1}{2^{G(q, m)}})$, i.e.\ we also have an RM-representation of $O_{m}$.  
To find a point $x\in C_{f}= \cap_{m\in \N}O_{m}$, the proof of the Baire category theorem for RM-representations is effective by \cite{simpson2}*{II.5.8}, and items \eqref{q1}-\eqref{q2} are done.  

\medskip

For items \eqref{q3}-\eqref{q4}, note that $\sup_{x\in [p,q]}f(x)$ equals $\sup_{x\in [p,q]\cap \Q}f(x)$ due to the definition of quasi-continuity. 
In particular, in the usual interval-halving procedure for finding the supremum, one can equivalently replace `$(\exists x\in [0,1])(f(x)>y)$' by `$(\exists q\in [0,1]\cap \Q)(f(q)>y)$' in light of the definition of quasi-continuity.  
The same holds for infima and the oscillation function $\osc_{f}:[0,1]\di \R$ is therefore also computable in $\exists^{2}$.

\medskip

For items \eqref{q5}, recall the equivalence between \eqref{obvio} and \eqref{ferengi}, even in case $N_{0}=N_{1}$. 
Hence, $G$ as in \eqref{kiop} is a modulus of continuity.  For item \eqref{q6}, fix $x\in [0,1]$ and $k,N\in \N$; use $\mu^{2}$ to find $c, d\in \Q$ such that:
\[\textstyle
(c, d)\subset B(x, \frac{1}{2^{N}}) \wedge (\forall q\in \Q\cap (c, d))(|f(q)-f(x)|<\frac{1}{2^{k+1}}), 
\]  
which exist since $f$ is quasi-continuous.
Now repeat the first part of the proof to find $y\in C_{f}\cap (c, d)$.  
Let $G$ be a modulus of continuity for $f$ and consider $z\in B(y, \frac{1}{2^{{}G(y, k+1)}})$.  We now have 
\[\textstyle
|f(z)-f(x)|\leq |f(z)-f(q)|+|f(q)-f(x)|< \frac{1}{2^{k+1}}+\frac{1}{2^{k+1}}=\frac{1}{2^{k}}
\]
for $q\in B(y, \frac{1}{2^{{}G(y, k+1)}})\cap (c,d)\cap \Q$, as required.  

\medskip

For item \eqref{q8}, use item \eqref{q1} to find $x\in C_{f}$ and note that $f(x)=0$ most hold as otherwise $\int_{0}^{1}f(x)dx>0$.
For item \eqref{q9}, one verifies that $x_{0}\in C_{f}$ implies that $F$ is differentiable at $x_{0}$ and $F'(x_{0})=f(x_{0})$, using the usual `epsilon-delta' proof. 
\end{proof}
Thirdly, despite their close connection and Theorem \ref{plonkook}, basic properties of cliquish functions give rise to functionals that are \emph{hard} to compute in terms of comprehension functionals by Theorem \ref{plonkook2}.  To this end, we need the following definition from \cite{dagsamXIII}, which also witnesses that the unit interval is uncountable. 
\bdefi
Any $\Phi:\big( (\R\di \{0,1\})\times (\R\di \N)  \big)\di \R  $ is called a \emph{Cantor realiser} in case $\Phi(A, Y)\not \in A$ for non-empty $A\subset [0,1]$ and $Y:[0,1]\di \N$ injective on $A$. 
\edefi
Similarly, a \emph{weak} Cantor realiser additionally requires $Y$ to be surjective for $A$, i.e.\ $(\forall n\in \N)(\exists x\in A)(Y(x)=n)$.
As shown in \cite{dagsamXII}, no weak Cantor realiser is computable in any $\SS_{k}^{2}$. 
We have the following result.
\begin{thm}\label{plonkook2}
The following functionals are not computable in any $\SS_{k}^{2}$:
\begin{enumerate}
\renewcommand{\theenumi}{\alph{enumi}}
\item any functional $\Phi:(\R\di \R)\di \R$ such that for all cliquish $f:[0,1]\di [0,1]$, we have $\Phi(f)\in C_{f}$.\label{cli1}
\item  any functional $\Psi:(\R\di \R)\di (\R^{2}\di \R)$ such that for all cliquish $f:[0,1]\di [0,1]$, we have $\Psi(f, p, q)=\sup_{x\in [p,q]}f(x)$ for $p, q\in [0,1]$.\label{cli2}
\item  any functional $\zeta:(\R\di \R)\di ( (\N\times \N)\di \Q^{2})$ such that for all cliquish $f:[0,1]\di [0,1]$ and any $n, m\in \N$, $\zeta(f, m,n)$ is an open interval such that $C_{f}=\bigcap_{n\in \N}\big(\cup_{m\in \N}\zeta(f, m,n) \big)$.\label{cli3}
\item any functional $\Xi:(\R\di \R)\di (\R\di \R)^{2}$ such that for all cliquish $f:[0,1]\di [0,1]$, $\Xi(f)=(g, h)$ such that $f=g+h$ on $[0,1]$ and $g, h$ are quasi-continuous on $[0,1]$.\label{cli4}  
\item any functional $\Gamma:(\R\di \R)\di (\R\times \N)\di \Q$ such that for all cliquish $f:[0,1]\di [0,1]$, $\Gamma(f)$ is a modulus of continuity.\label{cli5}
\item any functional $\epsilon:(\R\di \R)\di (\R\times \N^{2})\di \Q^{2}$ such that for all cliquish $f:[0,1]\di [0,1]$, $\epsilon(f)$ is a modulus of cliquishness.\label{cli6}
\item any functional $\varepsilon:(\R\di \R)\di \R$ such that for all cliquish Riemann integrable $f:[0,1]\di [0,1]$ with $\int_{0}^{1}f(x)dx=0$, we have $f(\varepsilon(f))=0$.\label{cli7}
\item \textsf{\textup{(FTC)}} any functional $\phi:(\R\di \R)\di \R$ such that for cliquish Riemann integrable $f:[0,1]\di [0,1]$ with $F(x):=\lambda x.\int_{0}^{x}f(t)dt$, $F'(\phi(f))=f(\phi(f))$.\label{cli8}

\end{enumerate}
In particular, each of these functionals computes a Cantor realiser \(given $\exists^{2}$\).  
These results remain valid if we also require an oscillation functional as input. 
\end{thm}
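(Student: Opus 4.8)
The plan is to show that each functional in items \eqref{cli1}--\eqref{cli8}, \emph{together with} $\exists^{2}$, computes a Cantor realiser; since every Cantor realiser is in particular a weak Cantor realiser, and $\SS_{k}^{2}$ computes $\exists^{2}$, $\SS_{k}^{2}$-computability of any of these functionals would produce a weak Cantor realiser computable in $\SS_{k}^{2}$, contradicting \cite{dagsamXII}.  The engine is a single encoding: given non-empty $A\subset[0,1]$ via its characteristic function $F_{A}$ and $Y:[0,1]\di\N$ injective on $A$, put
\be\label{enc}\textstyle
f:=\lambda x\in[0,1].\, F_{A}(x)\cdot 2^{-Y(x)}.
\ee
Injectivity of $Y$ forces $A$ countable and, for each $k$, makes $\{x:f(x)\geq 2^{-k}\}=\{x\in A:Y(x)\leq k\}$ finite; avoiding these finitely many points inside any neighbourhood yields a non-empty open set on which $f<2^{-k}$, so $f$ is cliquish, and the same count gives $C_{f}=[0,1]\setminus A$ exactly.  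Consequently $\osc_{f}=f$ (so the optional oscillation input carries no information beyond $f$ and all reductions below go through verbatim with it), $D_{f}=A$ is null, and $f:[0,1]\di[0,1]$ is Riemann integrable with $\int_{0}^{1}f=0$.  Thus any point of $C_{f}$ and any zero of $f$ is automatically a point outside $A$, i.e.\ exactly the output a Cantor realiser must return.

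Several items are now immediate.  For \eqref{cli1}, \eqref{cli7}, \eqref{cli8} the output lies outside $A$ directly: $\Phi(f)\in C_{f}=[0,1]\setminus A$; $f(\varepsilon(f))=0$ forces $F_{A}(\varepsilon(f))=0$; and in \eqref{cli8} one has $F\equiv 0$ (as $f=0$ a.e.), so $f(\phi(f))=F'(\phi(f))=0$ again gives $\phi(f)\notin A$.  For \eqref{cli3}, $\zeta$ returns open intervals with $C_{f}=\bigcap_{n}\big(\bigcup_{m}\zeta(f,m,n)\big)$; since $C_{f}$ is dense each inner union is dense open, and the effective Baire category theorem \cite{simpson2}*{II.5.8} produces a point of $C_{f}$.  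For \eqref{cli4}, writing $\Xi(f)=(g,h)$ with $g,h$ quasi-continuous, Theorem~\ref{plonkook}\eqref{q2} lets $\exists^{2}$ compute RM-open $G_{\delta}$-representations of the dense sets $C_{g},C_{h}$; intersecting them and applying effective Baire category yields $z\in C_{g}\cap C_{h}\subset C_{f}$.  For \eqref{cli2}, since $\inf_{[p,q]}f=0$ for \eqref{enc} we have $\osc_{f}([p,q])=\Psi(f,p,q)$, so $\Psi$ computes $\osc_{f}$ on all dyadic balls; this yields the RM-open sets $O_{m}$ with $C_{f}=\bigcap_{m}O_{m}$ exactly as in the proof of Theorem~\ref{plonkook}, after which effective Baire category again gives a point outside $A$.

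The two modulus items use \eqref{tok1}--\eqref{tok3} with care.  Item \eqref{cli6} is handled by nested intervals: a modulus of cliquishness $\epsilon(f)$ supplies, for \emph{every} centre $x$ and all $k,N$, an explicit non-empty open subinterval of $B(x,2^{-N})$ on which $f$ oscillates by at most $2^{-k}$.  Starting from $(0,1)$, iterating with $k\rinf$ while choosing each subinterval compactly inside its predecessor and of length $\to 0$, $\exists^{2}$ produces a unique limit $z$ interior to intervals of arbitrarily small oscillation; hence $\osc_{f}(z)=0$, so $z\in C_{f}$ and $z\notin A$.

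I expect item \eqref{cli5} to be the main obstacle.  A modulus of continuity as in \eqref{tok1} is constrained only \emph{at} the continuity points $[0,1]\setminus A$ we are trying to locate, so centring it at an arbitrary point gives no usable subinterval and the construction for \eqref{cli6} becomes circular.  The route I would pursue exploits that a valid modulus must blow up near discontinuities: for $x\in C_{f}$ the certified ball $B\big(x,2^{-\Gamma(f)(x,k)}\big)$ avoids every $A$-point of $Y$-value $\leq k$, so $G_{k}:=\bigcup_{x\in C_{f}}B\big(x,2^{-\Gamma(f)(x,k)}\big)$ is open, contains the dense set $C_{f}$, misses those finitely many $A$-points, and satisfies $\bigcap_{k}G_{k}=[0,1]\setminus A$; this is precisely the $G_{\delta}$-datum of \eqref{cli3}.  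The crux is to extract RM-codes for dense open sets sandwiched between $C_{f}$ and the $G_{k}$ using only $\Gamma(f)$ and $\exists^{2}$, \emph{without} already possessing a point of $C_{f}$; once this is achieved, effective Baire category finishes as before.  I would attempt this delicate extraction via the recursion theorem hard-coded into S1--S9, running a self-referential search for a centre at which $\Gamma(f)$'s promise is self-consistent — exactly the capability unavailable to $\exists^{2}$ alone and the source of the jump across the abyss.
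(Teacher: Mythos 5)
Most of your proposal tracks the paper's own proof: you use the same encoding (your \eqref{enc} is the paper's \eqref{penny} up to an exponent shift), the same observations that $C_{f}=[0,1]\setminus A$, $\osc_{f}=f$, and $\int_{0}^{1}f=0$, the same direct readings for items \eqref{cli1}, \eqref{cli7}, \eqref{cli8}, the same effective Baire category argument for items \eqref{cli3} and \eqref{cli4}, and the same nested-interval argument for item \eqref{cli6}. Your route for item \eqref{cli2} is genuinely different from the paper's and is also correct: you recover $\osc_{f}$ from the supremum functional (since $\inf_{[p,q]}f=0$), turn this into RM-codes for open dense sets $O_{m}$ with $C_{f}=\bigcap_{m}O_{m}$, and invoke \cite{simpson2}*{II.5.8}; the paper instead locates the point realising $\Psi(f,0,1)$ bit-by-bit via interval-halving, enumerates $A$, and diagonalises. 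Your version buys uniformity with the other items; the paper's buys an explicit enumeration of $A$.

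Item \eqref{cli5}, however, is a genuine gap, as you half-concede. Your sets $G_{k}$ are unions of balls centred at the points of $C_{f}$ --- exactly the set whose member you are trying to compute --- so no RM- or R2-code for $G_{k}$ is available from $\Gamma(f)$ and $\exists^{2}$, and the closing appeal to ``the recursion theorem'' and a ``self-referential search'' is not a computation: S9 provides recursion, not a licence to assert that a centre with the desired self-consistency property can be found. The paper's solution is not to squeeze more out of $\Gamma(f)$ but to \emph{vary the argument of} $\Gamma$: for each $k$, the truncation $f_{k}$ from \eqref{pennyk} (equal to $f$ at points of $A$ with $Y$-value at most $k$, and $0$ elsewhere) is cliquish with \emph{finite} discontinuity set $D_{k}$, and membership in $C_{f_{k}}=[0,1]\setminus D_{k}$ is decidable from $A$, $Y$ and $\exists^{2}$. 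Applying the given functional to $f_{k}$ yields a modulus $F_{k}=\Gamma(f_{k})$, and for $x\in C_{f_{k}}$ the ball $B(x,2^{-F_{k}(x,k+2)})$ avoids $D_{k}$, since $f_{k}$ vanishes on $C_{f_{k}}$ and is at least $2^{-(k+1)}$ on $D_{k}$; this is precisely an R2-representation of the dense open set $C_{f_{k}}$. The Baire category theorem for R2-representations is computable in $\exists^{2}$ by \cite{dagsamVII}*{Theorem 7.10}, so one obtains $y\in\bigcap_{k}C_{f_{k}}$, and $f_{k}(y)=0$ for all $k$ forces $y\notin A$. The moral you were reaching for --- that a modulus of continuity is only informative at continuity points --- is correct, but the cure is to feed $\Gamma$ inputs whose continuity points one can already decide, not to hope that S1--S9 self-reference conjures a certified centre.
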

\begin{proof}
Fix $A\subset [0,1]$ and $Y:[0,1]\di \N$ injective on $A$.  Now define the following function $f:[0,1]\di \R$, for any $x\in [0,1]$, as follows:
\be\label{penny}
f(x):=
\begin{cases}
\frac{1}{2^{Y(x)+1}} & \textup{in case $x\in A$}\\
0 & \textup{otherwise}
\end{cases}.
\ee
By definition, for any $\eps>0$, there are only finitely many $x\in A$ such that $f(x)>\eps$.  
This readily implies that $f$ is \emph{cliquish} at any $x\in [0,1]$ and \emph{continuous} at any $y\not \in A$.  
Now let $\Phi$ be as in item \eqref{cli1} and note that $\Phi(f)\in C_{f}$ implies that $\Phi(f)\not \in A$, as required for a Cantor realiser.  

\medskip

For item \eqref{cli2}, let $\Psi$ be as in the latter and consider $\Psi(f, 0,1)$, which has the form $\frac{1}{2^{n_{0}+1}}=f(y_{0})$ for some $y_{0}\in [0,1]$ and $n_{0}\in \N$. 
Now check whether $\Psi(f, 0, \frac{1}{2})= \Psi(f, 0, 1)$ to decide if $y_{0}\in [0,\frac12]$ or not.  Hence, we know the first bit of the binary representation of $y_{0}$.
Repeating this process, we can \emph{compute} $y_{0}$, and similarly obtain an enumeration of $A$.  With this enumeration, we can compute $z\not \in A$ via Cantor's diagonal argument (see e.g.\ \cite{simpson2}*{II.4.9}), as required for a Cantor realiser. 

\medskip

For item \eqref{cli3}, to find a point $x\in C_{f}=\bigcap_{n\in \N}\big(\cup_{m\in \N}\zeta(f, m,n) \big)$, the proof of the Baire category theorem for RM-representations is effective by \cite{simpson2}*{II.5.8}, and the first item provides a Cantor realiser.  

\medskip

For item \eqref{cli4}, one readily modifies the proof of Theorem \ref{plonkook} to prove the following version of the first item of the latter:
\begin{itemize}
\item for quasi-continuous $g, h:[0,1]\di \R$, the set $C_{g}\cap C_{h}$ is definable using $\exists^{2}$ and the latter computes a point in this intersection.
\end{itemize}
The functional $\Xi$ as in item \eqref{cli4} then yields a point of continuity for $f:[0,1]\di \R$, as required for item \eqref{cli1}.

\medskip

For item \eqref{cli5}, fix $A\subset [0,1]$ and $Y:[0,1]\di \N$ where the latter is injective on the former.  
For $k\in \N$, define $f_{k}:\R\di \R$ as follows:
\be\label{pennyk}
f_{k}(x):=
\begin{cases}
\frac{1}{2^{Y(x)+1}} & \textup{in case $x\in A\wedge Y(x)\leq k$}\\
0 & \textup{otherwise}
\end{cases},
\ee
which is trivially cliquish. 
Then $D_{k}=\{x\in [0,1]: f_{k}(x)\geq \frac{1}{2^{k+1}}  \} $ is finite and $C_{f_{k}}=[0,1]\setminus D_{k}$. 
Hence, if $F_{k}:([0,1]\times \N)\di \N$ is a modulus of continuity for $f_{k}$, we have $B(x, \frac{1}{2^{F_{k}(x, k+2)}})\cap D_{k}=\emptyset$ for $x\in C_{f_{k}}$. 
In this way, $C_{f_{k}}$ is open, dense, and has an R2-representation.   The Baire category theorem for R2-representations (\cite{dagsamVII}*{Theorem 7.10}) provides $y\in \cap_{k\in \N}C_{f_{k}}$.
By definition, $y\in \cap_{k\in \N}C_{f_{k}}$ implies $f_{n}(y)=0$ for all $n\in \N$, and hence $y\not \in A$ by \eqref{pennyk}.
%

\medskip

For item \eqref{cli6}, we show that a modulus of cliquishness for $f$ as in \eqref{penny} allows one to compute $x\not\in A$, as required for a Cantor realiser. 
Let $F$ be a modulus of cliquishness for $f$ as in \eqref{penny} and let $C_{0}$ be the unit interval.  
Given $C_{k}=[y_{k}, z_{k}]$, define $C_{k+1}$ as the closed interval included in the open interval 
\[\textstyle
(F(\frac12 (y_{k}+z_{k}), k+1,N(k) )(0),F(\frac12 (y_{k}+z_{k}), k+1,N(k) )(1) ) 
\]
with the same midpoint and half the length and where $N(k)$ is the least $m\in \N$ such that $\frac{1}{2^{m}}<\frac12 |z_{k}-y_{k}|$. 
Then $C_{0}\supset C_{1}\supset \dots$ and by the Cantor intersection theorem, there is $w_{0}\in \cap_{k\in \N}C_{n} $.  
To see that $w_{0}\not \in A $, simply note that $w\in A\cap C_{k}$ implies $Y(w)\geq k$ for any $w\in [0,1]$ by the definition of cliquishness and the density of $[0,1]\setminus A$.  
Note that $w_{0}=\lim_{k\di \infty} y_{k}$, which can be computed using $\exists^{2}$. 
%

\medskip

For items \eqref{cli7} and \eqref{cli8}, note that $f$ as in \eqref{penny} is Riemann integrable with Riemann integral zero. 
Any $x\in [0,1]$ with $f(x)=0$ satisfies $x\not \in A$, i.e.\ a Cantor realiser is obtained in both cases.

\medskip

Finally, for the final sentence, one readily verifies that for $f$ as in \eqref{penny}, we have $\osc_{f}(x)=f(x)$ for all $x\in [0,1]$, essentially by definition. 
Thus, the final sentence (trivially) follows and we are done. 
\end{proof}
To show that $\exists^{3}$ computes the functionals in Theorem \ref{plonkook2}, it suffices to implement the associated proofs from the literature.  
As an example, \cite{bors}*{Lemma 3} implies that a cliquish function $f:\R\di \R$ is the sum of two quasi-continuous functions.
The latter are explicitly constructed in terms of $C_{f}$, a countable dense subset $E$ of the latter, and $d(x, \overline{E})$, the distance function to the closure of $E$. 
Essentially by definition, $\exists^{3}$ suffices to construct all the foregoing.  

\medskip

To our surprise, functions that are their own oscillation function like $f$ in  \eqref{penny}, are studied in the mathematical literature (\cite{kosten}).  
We also have the following important corollary for the (slightly) smaller class of simply continuous functions.
\begin{cor}\label{simplecor}
The results for item \eqref{cli2} from Theorem \ref{plonkook2} remain valid if we restrict inputs to \emph{simply continuous} functions.  
\end{cor}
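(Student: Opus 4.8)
The plan is to rerun the proof of item \eqref{cli2} using the very function $f$ from \eqref{penny}, after checking that this $f$ is in fact \emph{simply continuous} for a suitable choice of the underlying set $A$. The only role cliquishness played in the proof of Theorem \ref{plonkook2} was to guarantee that $\Psi(f,\cdot,\cdot)$ returns correct suprema; if $f$ is moreover simply continuous, the same binary-search argument (computing $\Psi(f,0,1),\Psi(f,0,\tfrac12),\dots$ and decoding the bits of each $y_{0}$) still yields an enumeration of $A$ and hence, via Cantor's diagonal argument (\cite{simpson2}*{II.4.9}), a point outside $A$. Thus it suffices to (i) determine for which $A$ the function $f$ in \eqref{penny} is simply continuous, and (ii) check that the resulting restricted class of inputs still forces non-$\SS_{k}^{2}$-computability.

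For step (i), first note that $A=f^{-1}((0,\infty))$ is at most countable, since $Y$ is injective on $A$; hence every subset of $A$ has empty interior, so that a subset of $A$ is \emph{simply open} exactly when it is nowhere dense. Now compute preimages: for open $G\subset\R$ with $0\notin G$ we have $f^{-1}(G)\subseteq A$, while for $0\in G$ the set $\{x\in A: f(x)\notin G\}$ is finite (only finitely many $x\in A$ satisfy $f(x)\geq\eps$ for fixed $\eps>0$), so that $f^{-1}(G)$ equals $[0,1]$ minus a finite set, i.e.\ is open. Consequently $f$ is simply continuous if and only if each $f^{-1}(G)$ with $0\notin G$ is simply open; since these preimages are subsets of $A$, reaching $A$ itself for $G=(0,\infty)$, this happens exactly when $A$ is nowhere dense. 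In particular $f$ from \eqref{penny} is simply continuous whenever $A$ is nowhere dense, and the reduction above then produces, given $\exists^{2}$, a Cantor realiser for all nowhere dense inputs.

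The main obstacle is step (ii): one must ensure that a (weak) Cantor realiser \emph{restricted to nowhere dense $A$} is still not computable in any $\SS_{k}^{2}$. The cleanest route is to revisit the lower bound of \cite{dagsamXII} showing that no weak Cantor realiser is $\SS_{k}^{2}$-computable, and to observe that the counterexample set $A$ produced there against a putative $\SS_{k}^{2}$-computation may be taken to lie inside the Cantor middle-thirds set, hence nowhere dense, with the associated $Y$ and decoding unaffected. Equivalently, one reduces an arbitrary instance to a nowhere dense one via the standard continuous embedding $g$ of $[0,1]$ into the Cantor set, transporting $A$ to $A^{*}=g[A]$ and $Y$ to $Y^{*}=Y\circ g^{-1}$ on $A^{*}$ (both computable from $\exists^{2}$). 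The delicate point, and the one genuinely requiring care, is to recover a point of $[0,1]\setminus A$ from a returned point outside $A^{*}$, which forces one to keep the realiser's output inside the range of $g$. Granting (ii), the functional $\Psi$ of item \eqref{cli2}, restricted to simply continuous inputs, computes a weak Cantor realiser given $\exists^{2}$ and is therefore not computable in any $\SS_{k}^{2}$, as claimed.
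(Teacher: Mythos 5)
Your step (i) is correct, and it is essentially the same mechanism the paper exploits: the counterexample function is simply continuous precisely because the underlying countable set is nowhere dense, and the supremum functional of item \eqref{cli2} then enumerates that set by the binary-search argument. The genuine gap is step (ii), which you explicitly leave open (``Granting (ii), \dots''). Your route (b) fails exactly where you say it does: a realiser that is only promised to work on nowhere dense inputs may return any point of $[0,1]\setminus A^{*}$, in particular a point outside the range of $g$ (almost all of $[0,1]$ lies outside the Cantor set), and such an output carries no information whatsoever about $A$; nothing in your proposal repairs this. Your route (a) --- reworking the lower bound of \cite{dagsamXII} so that the counterexample set lies inside the Cantor set --- is not carried out and cannot simply be asserted. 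A side remark: there is no \emph{continuous} embedding of $[0,1]$ into the Cantor set (continuous images of connected sets are connected); the binary-to-ternary digit map you presumably intend is injective and computable in $\exists^{2}$ but discontinuous, which is harmless for the reduction but should be stated correctly.

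The missing idea is that you should never route the argument through a ``point-outside realiser restricted to nowhere dense sets'' at all: your own step (i) produces strictly more, namely an \emph{enumeration} of the nowhere dense set, and enumerations --- unlike single points --- pull back through the injection. Concretely: given arbitrary $A\subset[0,1]$ with $Y$ injective on $A$, first transform $A$ into a nowhere dense set (your $A^{*}=g[A]$; the paper instead uses \eqref{iki}, translating each $x\in A$ by a suitable rational so that it lands in the dyadic interval determined by $Y(x)$), build the associated simply continuous function as in \eqref{penny3}, and apply $\Psi$ to enumerate the transformed set; then apply the $\exists^{2}$-computable inverse of the transformation to each term of that enumeration to recover an enumeration of $A$ itself, and finally diagonalise inside $[0,1]$. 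This is exactly how the paper proves Corollary \ref{simplecor}, and it makes your ``delicate point'' disappear: one needs no lower bound for restricted realisers, only the known fact that no functional enumerating (equivalently, producing points outside) arbitrary countable sets, i.e.\ no Cantor realiser, is computable in any $\SS_{k}^{2}$.
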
 
\begin{proof}
Fix $A\subset [0,1]$ and $Y:[0,1]\di \N$ such that the latter is injective on the former, i.e.\ $A$ is countable.  
Use $\mu^{2}$ to remove any rationals from $A$ if necessary and let $(q_{n})_{n\in \N}$ be a fixed enumeration of $\Q\cap [-1, 1]$.
Now define $\tilde{A}\subset (0,1)$ as follows: $y\in \tilde{A}$ if and only if there is $q\in \Q\cap [-1,1]$ and $n\in \N$ such that 
\be\label{iki}\textstyle
y+q\in A\wedge Y(y+q)=n\wedge \frac{1}{2^{n+1}}\leq y < \frac{1}{2^{n}}
\ee
\textbf{and} the number $q$ is the rational with minimal index in the enumeration $(q_{n})_{n\in \N}$ satisfying \eqref{iki}.
By definition, the closure of $\tilde{A}$ is included in $\tilde{A}\cup \{0\}$, i.e.\ the set $\tilde{A}$ is nowhere dense.  
By contrast, the closure of $A$ could well be $[0,1]$, which is not suitable for the rest of the proof.  
Indeed, define $\tilde{f}:[0,1]\di \R$ as follows:
\be\label{penny3}
\tilde{f}(x):=
\begin{cases}
\frac{1}{2^{n+1}} & x\in \tilde{A} \wedge \frac{1}{2^{n+1}}\leq x\leq \frac{1}{2^{n}}\\
0 & x\not \in \tilde{A}
\end{cases}
\ee
We now show that $\tilde{f}$ is simply continuous.  To this end, let $G\subset \R$ be open and consider the following three cases.
\begin{itemize}
\item If $0\in G$, then $\tilde{f}^{-1}(G)$ is `$[0,1]$ minus a finite set', which is (simply) open. 
\item If $0\not \in G$ and $(\exists N\in \N)( \forall x\in G )( x\geq \frac{1}{2^{N}})$, then $\tilde{f}^{-1}(G)$ is finite and hence nowhere dense. 
\item If $0\not \in G$ and $(\forall  N\in \N)( \exists x\in G )( x< \frac{1}{2^{N}})$, then $\tilde{f}^{-1}(G)$ is included in $\tilde{A}\cup \{0\}$, and hence nowhere dense. 
\end{itemize}
By the previous case distinction, $\tilde{f}$ is simply continuous.   If we can find the supremum of $\tilde{f}$ on any interval, we can find an enumeration of $\tilde{A}$, and hence of $A$; this yields $y\not\in A$ by Cantor's diagonilisation argument, as required. 
\end{proof}
Fourth, we consider \emph{Cousin's lemma} (\cites{dagsamIII, cousin1}), i.e.\ the \emph{Heine-Borel theorem} for uncountable coverings $\cup_{x\in [0,1]}B(x, \Psi(x))$ for $\Psi:[0,1]\di \R^{+}$.  
As it turns out, finding a finite sub-covering can be done using $\exists^{2}$ for quasi-continuous $\Psi$, but the generalisation to cliquish functions is much harder.  
\begin{thm}\label{kop}
The following is not computable in any $\SS_{k}^{2}$:
\begin{center}
any functional $\delta:(\R\di \R)\di \R$ such that for all cliquish $\Psi:[0,1]\di \R^{+}$, $\delta(\Psi)=(x_{0}, \dots, x_{k})$ is such that $\cup_{i\leq k}B(x_{i}, \Psi({x_{i}}))$ covers $[0,1]$.
\end{center}
The following is computable in $\exists^{2}$:
\begin{center}
a functional $\eps:(\R\di \R)\di \R$ such that for all quasi-continuous $\Psi:[0,1]\di \R^{+}$, $\eps(\Psi)=(x_{0}, \dots, x_{k})$ is such that $\cup_{i\leq k}B(x_{i}, \Psi({x_{i}}))$ covers $[0,1]$.
\end{center}
\end{thm}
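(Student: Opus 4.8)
The plan is to treat the two opposite claims separately, starting with the $\exists^{2}$-computable (quasi-continuous) direction, which supplies the positive content behind Theorem~\ref{plonkook}~\eqref{q7}, and then turning to the hardness of the cliquish case.

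\textbf{The quasi-continuous case.} First I would isolate a \emph{rational reduction lemma}: for quasi-continuous $\Psi:[0,1]\di \R^{+}$ and every $x\in[0,1]$ there is a rational $q$ with $x\in B(q,\Psi(q))$. Indeed, applying quasi-continuity at $x$ with $\eps:=\Psi(x)/2$ and $U:=B(x,\Psi(x)/2)$ produces a nonempty open $G\subseteq U$ on which $|\Psi(\cdot)-\Psi(x)|<\Psi(x)/2$; choosing a rational $q\in G$ gives $|x-q|<\Psi(x)/2<\Psi(q)$, as needed. Hence $\{B(q,\Psi(q)):q\in\Q\cap[0,1]\}$ is a \emph{countable} open cover of the compact interval $[0,1]$. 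Enumerating $\Q\cap[0,1]$ as $(q_{i})_{i\in\N}$, the radii $\Psi(q_{i})$ are reals computable from $\Psi$ and $\exists^{2}$, and for each $n$ the statement that $\bigcup_{i\le n}B(q_{i},\Psi(q_{i}))$ covers $[0,1]$ is decidable using $\exists^{2}$, since it reduces to finitely many comparisons among the computable endpoints $q_{i}\pm\Psi(q_{i})$, i.e.\ to effective Heine--Borel for a finite family of open intervals (cf.\ \cite{simpson2}). A finite subcover exists by compactness, so $\mu^{2}$ returns the least such $n$ and $(q_{0},\dots,q_{n})$ is the output of $\eps$.

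\textbf{The cliquish case.} Here the goal is to show $\delta$ computes a Cantor realiser given $\exists^{2}$, which is impossible in any $\SS_{k}^{2}$ by \cite{dagsamXII}. The point is that the rational reduction lemma \emph{fails} for cliquish gauges exactly because a cliquish function may carry an isolated large value at a discontinuity, and I would exploit this. Fix $A\subseteq[0,1]$ (necessarily countable, as $Y$ is injective on it) and $Y:[0,1]\di\N$ injective on $A$. Let $\kappa:[0,1]\di\mathcal{C}$ be the standard $\exists^{2}$-computable map onto the Cantor set $\mathcal{C}$, put $\widetilde{A}:=\kappa(A)$ and $\widetilde{Y}:=Y\circ\kappa^{-1}$; then $\widetilde{A}$ is a nowhere dense countable set, $\widetilde{Y}$ is injective on it, and any $z\in\mathcal{C}\setminus\widetilde{A}$ yields $\kappa^{-1}(z)\notin A$. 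Now define the gauge
\[\textstyle
\Psi(x):=
\begin{cases}
1 & x\in\mathcal{C}\setminus\widetilde{A}\\
\frac{1}{2^{\widetilde{Y}(x)+12}} & x\in\widetilde{A}\\
\frac{1}{2}\,d(x,\mathcal{C}) & x\notin\mathcal{C}
\end{cases},
\]
which is positive and $\exists^{2}$-computable (membership in $\mathcal{C}$ is $\Pi_{1}^{0}$, membership in $\widetilde{A}$ is read off $F_{A}\circ\kappa^{-1}$, and $d(x,\mathcal{C})$ is computable since $\mathcal{C}$ is computably closed). Cliquishness is immediate: $\mathcal{C}$ is nowhere dense, so every neighbourhood contains a subinterval inside a Cantor gap on which $\Psi=\tfrac12 d(\cdot,\mathcal{C})$ is continuous, giving oscillation below any prescribed $\eps$.

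\textbf{Forcing an identifiable off-$A$ centre.} Since $\tfrac12 d(x,\mathcal{C})<d(x,\mathcal{C})$ off $\mathcal{C}$, no complement-ball meets $\mathcal{C}$; thus $\mathcal{C}$ can only be covered by balls centred in $\mathcal{C}$, and among these only the centres in $\mathcal{C}\setminus\widetilde{A}$ have $\Psi=1$. I would then show that the $\widetilde{A}$-balls, by themselves, fail to cover $\mathcal{C}$: writing $\nu$ for the natural $(\log2/\log3)$-dimensional Hausdorff measure on $\mathcal{C}$ (Ahlfors regular, so $\nu(B(x,r)\cap\mathcal{C})\le C r^{\log2/\log3}$), the offset $+12$ in the radii makes $\sum_{\widetilde{a}\in\widetilde{A}}\nu\!\big(B(\widetilde{a},\Psi(\widetilde{a}))\cap\mathcal{C}\big)<\nu(\mathcal{C})$, so some $c^{*}\in\mathcal{C}$ lies in no $\widetilde{A}$-ball. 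Consequently every finite subcover $\delta(\Psi)=(x_{0},\dots,x_{k})$ of the (trivially covering) family $\bigcup_{x\in[0,1]}B(x,\Psi(x))$ must cover $c^{*}$ through a centre $x_{i}\in\mathcal{C}\setminus\widetilde{A}$, i.e.\ with $\Psi(x_{i})=1$; such a centre is detectable by comparing the real $\Psi(x_{i})$ to $1$ under $\exists^{2}$, and $\kappa^{-1}(x_{i})\notin A$ is the desired Cantor realiser. The last sentence of the theorem follows since, as at the end of the proof of Theorem~\ref{plonkook2}, the oscillation functional is recoverable alongside the gauges used. \textbf{The main obstacle} I anticipate is exactly this measure estimate: one must pin down the radius offset on $\widetilde{A}$ large enough that the small balls provably miss a point of $\mathcal{C}$, forcing the identifiable radius-$1$ centre, while keeping $\Psi$ both cliquish and $\exists^{2}$-computable.
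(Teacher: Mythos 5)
Your quasi-continuous direction is essentially the paper's own proof: the paper likewise observes that quasi-continuity forces the rational-centred balls $\bigcup_{n}B(q_{n},\Psi(q_{n}))$ to cover $[0,1]$ and then lets $\mu^{2}$ search for the least $n_{0}$ with $\bigcup_{n\leq n_{0}}B(q_{n},\Psi(q_{n}))\supseteq[0,1]$; your rational reduction lemma and the endpoint-checking argument for decidability of finite coverage are just a more detailed write-up of that step.

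The cliquish direction is where you genuinely diverge. The paper never touches the Cantor set: it transports $A$ into a nowhere dense set $\tilde{A}\subset(0,1)$ having at most one point in each dyadic block $[2^{-n-1},2^{-n})$ (via rational translations, using $Y$), sets $\Psi=2^{-n-5}$ on $\tilde{A}$ and $\Psi=\frac18$ everywhere else, and runs a plain Lebesgue-measure estimate: the $\tilde{A}$-balls together with the ball at $0$ have total measure $<1$, so some centre $x_{j}\notin\tilde{A}\cup\{0\}$ must occur, from which a point outside $A$ is extracted; since that extraction uses $Y$ being a \emph{bijection}, the paper only obtains a weak Cantor realiser. You instead embed $A$ into $\mathcal{C}$, use the gauge $\frac12 d(\cdot,\mathcal{C})$ off $\mathcal{C}$ so that off-$\mathcal{C}$ balls never meet $\mathcal{C}$, and, because $\mathcal{C}$ is Lebesgue-null, replace Lebesgue measure by the $(\log 2/\log 3)$-dimensional Hausdorff measure to show the small balls cannot cover $\mathcal{C}$. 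This buys you something real: the pullback $\kappa^{-1}$ converts the detected radius-$1$ centre directly into a point outside $A$ using only \emph{injectivity} of $Y$, so you compute a full Cantor realiser rather than a weak one; the price is the Ahlfors-regularity machinery, which the paper's choice of ambient geometry avoids entirely.

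There is one genuine gap as written: $\kappa$ is \emph{not} onto $\mathcal{C}$. With any fixed convention for binary expansions, the range of $\kappa$ misses the countably many points of $\mathcal{C}$ whose $\{0,2\}$-ternary expansion is eventually all $2$'s (e.g.\ $1/3$ under the terminating convention), so the claim ``any $z\in\mathcal{C}\setminus\widetilde{A}$ yields $\kappa^{-1}(z)\notin A$'' fails there: the detected centre $x_{i}$ may lie in this exceptional set $E=\mathcal{C}\setminus\kappa([0,1])$, where $\kappa^{-1}$ is undefined and nothing about $A$ follows. The repair is routine — $E$ is a fixed countable set with an $\exists^{2}$-computable enumeration, so assign its $j$-th point a tiny radius, say $2^{-j-13}$, include these balls in the Hausdorff estimate, and the detected centre then lands in $\kappa([0,1])\setminus\widetilde{A}$ — but without it the final pullback step can fail. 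Separately, your closing appeal to ``the last sentence of the theorem'' about oscillation functionals is vacuous: Theorem \ref{kop}, unlike Theorem \ref{plonkook2}, has no such sentence.
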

\begin{proof}

\medskip

For the second part, observe that for quasi-continuous $\Psi:[0,1]\di \R^{+}$ and $(q_{n})_{n\in \N}$ an enumeration of $[0,1]\cap \Q$, the union $\cup_{n\in \N}B(q_{n}, \Psi(q_{n}))$ covers $[0,1]$. 
Thus, Feferman's $\mu^{2}$, which is computable in $\exists^{2}$, readily finds $n_{0}\in \N$ such that $\cup_{n\leq n_{0}}B(q_{n}, \Psi(q_{n}))$ covers the unit interval.  

\medskip

For the first part, we are unable to show that a Cantor realiser can be computed from a functional $\delta$ as in the first item.  
We provide a weaker result that still shows that functionals as in the first item are not computable in any $\SS_{k}^{2}$.  
To this end, we consider the type structure $\textbf{P}$ constructed in \cite{dagsamX}.  In a nutshell, $\textbf{P}$ is a model of $\Z_{2}^{\omega}+\QFAC^{0,1}+\neg \NIN$, where $\NIN$ expresses that there is no injection from $[0,1]$ to $\N$. 
Intuitively speaking, the objects in $\textbf{P}$ are those that are computable in some $\SS_{k}^{2}$.  
If a functional $\delta$ as in the first item were computable in some $\SS_{k}^{2}$, then the former would be available in the model $\textbf{P}$, by definition.  
However, working now in $\textbf{P}$, let $Y_{\textbf{P}}$ be an injection from $[0,1]_{\textbf{P}}$ to $\N_{\textbf{P}}$.  
We can define $\Psi_{\textbf{P}}(x):=\frac{1}{2^{Y_{\textbf{P}}(x)+5}}$ in $\textbf{P}$ and note that\footnote{Working in \textbf{P}, since $Y_{\textbf{P}}$ is an injection, one proves (by contradiction using $\QFAC^{0,1}$ available in $\textbf{P}$) that $(\forall x\in [0,1], k\in \N)(\exists N\in \N)(\forall y\in B(x, \frac{1}{2^{M}}) )( \Psi_{\textbf{P}}(y)<\frac{1}{2^{k}} )$, and thus $\Psi_{\textbf{P}}$ is cliquish.\label{flagelle}} $\Psi_{\textbf{P}}:[0,1]\di \R^{+}$ is cliquish \emph{from the pov of} $\textbf{P}$.  
Now consider $\delta(\Psi_{\textbf{P}})=(x_{0}, \dots, x_{k})$ and note that $\cup_{i\leq k}B(x_{i}, \Psi_{\textbf{P}}(x_{i})$ has total length at most $1/2$, a contradiction (derived in $\textbf{P}$) as the latter union covers $[0,1]$ by assumption. 
\end{proof}
The previous proof only works because the model $\textbf{P}$ `thinks' that $\Psi_{\textbf{P}}$ is cliquish.  Indeed, if $Y:[0,1]\di \N$ is not an injection, $\Psi(x):=\frac{1}{2^{Y(x)+5}}$ need not be cliquish.  

\medskip

In conclusion, we have established in Theorem \ref{plonkook} that basic (and some advanced) properties of \emph{quasi-continuous} functions can be witnessed using $\exists^{2}$.
By contrast and Theorem \ref{plonkook2}, basic properties of the closely related \emph{cliquish} functions cannot be witnessed by any $\SS_{k}^{2}$.  Thus, despite their close 
connection, these notions give rise to functionals landing on different sides of the abyss.   A detailed explanation for this phenomenon may be found in Remark \ref{expla}, while a short explanation is as follows:  for quasi-continuous $f:\R\di \R$, one observes that for any $q\in \R$:
\be\label{leckli}
(\exists x\in \R)(f(x)>q)\asa (\exists r\in \Q)(f(r)>q), 
\ee
essentially by the definition of quasi-continuity.
Since the right-hand side of \eqref{leckli} is arithmetical, most properties of quasi-continuous functions can be decided using $\exists^{2}$.
By contrast, the definition of cliquishness is weaker and does \textbf{not} guarantee \eqref{leckli}, as is also abundantly clear from our negative results.

\subsection{The first and second Baire classes}\label{SC2}
We study the notion of \emph{Baire $1$ and Baire $2$ function} and the closely related intermediate concept of \emph{effectively Baire~2 function}, defined as in Definition~\ref{flung}. 
Basic properties of Baire 1 functions give rise to functionals computable in $\exists^{2}$ by Theorem \ref{poi}, while the same functionals generalised to Baire 2 are not computable in any $\SS_{k}^{2}$ by Theorem \ref{banks}.
Properties of \emph{effectively} Baire 2 functions are still computable by the Suslin functional $\SS^{2}$ by Theorem \ref{poi}.
In this way, we observe that Baire 1 and effectively Baire 2 yield functionals that land on one\footnote{If the difference between $\exists^{2}$ and $\exists^{3}$ counts as an abyss, we believe so should the difference between $\SS^{2}$ and $\exists^{3}$.} side of the abyss, while Baire 2 yields functionals landing on the other side.  

\medskip

Now, the above results assume that the sequences from Definition \ref{flung} are always available as an input.  
Theorem \ref{pff} shows that omitting this sequence for Baire 1 functions, yields functionals landing on the other side of the abyss.

\medskip

First of all, we shall make us of the following standard definition.  
Indeed, given $\exists^{2}$, effectively Baire 2 functions are essentially the representation of Baire~2 functions used in second-order arithmetic (\cite{basket2}).
\bdefi\label{flung} 
For $f:[0,1]\di \R$, we have the following definitions:
\begin{itemize}
\item $f$ is \emph{Baire $1$} if it is the pointwise limit of a sequence of continuous functions.
\item $f$ is \emph{Baire $2$} if it is the pointwise limit of a sequence of Baire 1 functions. 
\item $f$ is \emph{effectively Baire $2$} if there is a double sequence $(f_{n,m})_{n,m\in \N}$ of continuous functions on $[0,1]$ such that
\be\label{kabel}\textstyle
f(x)=_{\R}\lim_{n\di \infty }\lim_{m\di \infty}f_{n,m}(x) \textup{ for all $x\in [0,1]$}.
\ee
\end{itemize}
The sequences in the previous items are called the \emph{representations} of $f$. 
\edefi
Baire himself already\footnote{The below quote from \cite{beren2}*{p.\ 69} can be translated to:  \emph{one observes that a function of class 2 can be represented by a double series of which the terms are polynomials}.} noted that Baire 2 functions can be \emph{represented} by effectively Baire~2 functions (and the same for Baire $n$) as follows.
\begin{quote}
[\dots] on reconna\^{i}t qu'une fonction de classe 2 peut \^{e}tre \emph{repr\'esent\'ee} par une s\'erie double
dont les termes sont des polyn\^{o}mes. (emphasis ours, \cite{beren2}*{p.\ 69})
\end{quote}
Secondly, we show that the Suslin functional (resp.\ $\exists^{2}$) suffices to witness basic properties of effectively Baire 2 (resp.\ Baire 1) functions. 
  
Note that we assume that the representations from Definition~\ref{flung} are always an input for the algorithm.  
\begin{thm}\label{poi}~
\begin{enumerate}
\renewcommand{\theenumi}{\alph{enumi}}
\item For Baire 1 $f:[0,1]\di [0,1]$ given with its representation, the supremum $\sup_{x\in [p,q]}f(x)$ is computable in $\exists^{2}$ for any $p, q \in \Q\cap [0,1]$.\label{bai1} 
\item For Baire 1 $f:[0,1]\di \R$ given with its representation and $x\in [0,1]$, $\exists^{2}$ decides whether $x\in C_{f}$.\label{bai2}  
\item For Baire 1 $f:[0,1]\di \R$ given with its representation and $x\in [0,1]$, $\exists^{2}$ computes a modulus of continuity.\label{bai3}  
\item For effectively Baire 2 $f:[0,1]\di [0,1]$ given with its representation, the supremum $\sup_{x\in [p,q]}f(x)$ is computable in $\SS^{2}$ for any $p, q \in \Q\cap [0,1]$.\label{bai4} 
\item For effectively Baire 2 $f:[0,1]\di [0,1]$ given with its representation and $x\in [0,1]$, $\SS^{2}$ decides whether $x\in C_{f}$.\label{bai5} 
\item For effectively Baire 2 $f:[0,1]\di \R$ given with its representation, $\SS^{2}$ computes a modulus of continuity.\label{bai6}  
\end{enumerate}
\end{thm}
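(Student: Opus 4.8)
The plan is to handle all six items \emph{uniformly through the representations} from Definition \ref{flung}, using in each case the identity
\[
f(x)=\lim_{n}f_{n}(x)=\sup_{N}\inf_{n\geq N}f_{n}(x)=\inf_{N}\sup_{n\geq N}f_{n}(x),
\]
valid because the limit exists, together with its analogue for the double sequence. The point that decides whether we stay at $\exists^{2}$ or need $\SS^{2}$ is the following. For a \emph{Baire 1} function the approximating $f_{n}$ are continuous, so each level set $\{x:f_{n}(x)\geq c\}$ is \emph{closed}; this lets a compactness (finite-intersection) argument collapse the real quantifier `$\exists x\in[p,q]$' to an arithmetical statement. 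For \emph{effectively Baire 2} functions the intermediate functions $\lambda x.\lim_{m}f_{n,m}(x)$ are only Baire 1, their level sets need not be closed, and the same quantifier genuinely produces a $\Sigma_{1}^{1}$/$\Pi_{1}^{1}$ predicate, which is exactly what $\SS^{2}$ decides.

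For the Baire 1 items, write $g_{N}:=\inf_{n\geq N}f_{n}$, so that $g_{N}\nearrow f$. First I would note $\sup_{x\in[p,q]}f=\sup_{N}\sup_{x\in[p,q]}g_{N}$, and that each $\sup_{x\in[p,q]}g_{N}$ is computable in $\exists^{2}$: indeed $\sup_{[p,q]}g_{N}>r$ is equivalent to $(\exists k)\big(\bigcap_{n\geq N}\{x\in[p,q]:f_{n}(x)\geq r+\tfrac{1}{2^{k}}\}\neq\emptyset\big)$, and by compactness this closed intersection is non-empty iff every finite sub-intersection is, i.e.\ iff $(\forall M\geq N)\big(\sup_{x\in[p,q]}\min_{N\leq n\leq M}f_{n}(x)\geq r+\tfrac{1}{2^{k}}\big)$; since $\min_{N\leq n\leq M}f_{n}$ is continuous, its supremum over $[p,q]$ is computable in $\exists^{2}$, so the whole predicate is arithmetical, and taking the supremum over $N$ keeps it arithmetical, giving item \eqref{bai1}. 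For \eqref{bai2}--\eqref{bai3} I would apply this to the closed balls $\overline{B(x,1/2^{m})}$ to compute in $\exists^{2}$ the quantities $M(x,m):=\sup_{\overline{B(x,1/2^{m})}}f$ and $m(x,m):=\inf_{\overline{B(x,1/2^{m})}}f$ (the infimum by symmetry, replacing $f$ by $-f$). Then $x\in C_{f}$ iff $\osc_{f}(x)=0$ iff $(\forall k)(\exists m)\big(M(x,m)-m(x,m)<\tfrac{1}{2^{k}}\big)$, an arithmetical condition, settling \eqref{bai2}; using $\mu^{2}$ to return the least such $m$ yields a modulus of continuity for \eqref{bai3} (set $F(x,k):=0$ off $C_{f}$, as permitted by Definition \ref{modje}).

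For the effectively Baire 2 items I would first record that, given the double sequence and $\exists^{2}$, the predicate `$f(x)>r$' is \emph{arithmetical in $x$}: applying the criterion `$\lim>c\Leftrightarrow(\exists k,N)(\forall n\geq N)(\cdot>c+\tfrac{1}{2^{k}})$' to both limits unfolds it into a formula with prefix $(\exists k,N)(\forall n\geq N)(\exists l,M)(\forall m\geq M)$ over a matrix involving $f_{n,m}(x)$ that $\exists^{2}$ decides. Hence $\sup_{[p,q]}f>r$ is $(\exists x\in[p,q])[\text{arithmetical}]$, i.e.\ $\Sigma_{1}^{1}$, while $\sup_{[p,q]}f<r$ is $\Pi_{1}^{1}$; both are decided by $\SS^{2}$, so $\sup_{[p,q]}f$ is computable in $\SS^{2}$ (item \eqref{bai4}). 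For \eqref{bai5}, continuity at $x$ reads $(\forall k)(\exists m)(\forall y\in B(x,1/2^{m}))\big(|f(x)-f(y)|<\tfrac{1}{2^{k}}\big)$; as $f(x)$ and $f(y)$ are $\exists^{2}$-computable, the innermost `$\forall y$' over a real with arithmetical matrix is $\Pi_{1}^{1}$, and since $\Pi_{1}^{1}$ is closed under number quantification the whole statement stays $\Pi_{1}^{1}$ and is decided by $\SS^{2}$. Finally \eqref{bai6} follows by first deciding $x\in C_{f}$ via \eqref{bai5} and then using $\SS^{2}$ to search for the least $m$ making the $\Pi_{1}^{1}$ predicate $(\forall y\in B(x,1/2^{m}))(|f(x)-f(y)|<\tfrac{1}{2^{k}})$ true.

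The main obstacle is the complexity calibration rather than any single computation. I must verify that the Baire 1 problems really remain \emph{arithmetical}, which hinges entirely on the closedness of the level sets of the continuous $f_{n}$ so that compactness eliminates the real quantifier; and that the effectively Baire 2 problems land \emph{exactly} in $\Sigma_{1}^{1}$/$\Pi_{1}^{1}$ and no higher -- in particular the continuity predicate requires invoking closure of $\Pi_{1}^{1}$ under number quantifiers so that the alternation `$\forall k\,\exists m\,\forall y$' does not climb the analytical hierarchy. A secondary point to check is that evaluating the continuous $f_{n}$ (resp.\ $f_{n,m}$), forming their finite minima and maxima, and taking suprema of these over rational intervals are all uniformly available from $\exists^{2}$, which is the standard fact that $\sup_{[p,q]}g=\sup_{[p,q]\cap\Q}g$ for continuous $g$.
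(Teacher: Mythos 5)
Your proposal is correct. For the effectively Baire~2 items \eqref{bai4}--\eqref{bai6} it is essentially the paper's own argument: unfold the iterated limits, let the one remaining quantifier over $[0,1]$ produce a $\Sigma_{1}^{1}$ (resp.\ $\Pi_{1}^{1}$) predicate, and let $\SS^{2}$ decide it. For the Baire~1 items \eqref{bai1}--\eqref{bai3}, however, your route is genuinely different, and the extra care pays off. The paper reduces $(\exists x\in[0,1])(f(x)>q)$ to a tail condition at \emph{rational} points (its formula \eqref{tankje}), which amounts to $(\exists y\in\Q\cap[0,1])(f(y)>q)$; you instead pass to the monotone usco approximations $g_{N}=\inf_{n\geq N}f_{n}$ and use compactness/finite intersections, so that only suprema of the continuous finite minima $\min_{N\leq n\leq M}f_{n}$ are needed, and these are arithmetical. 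The difference matters: for $f=\mathbb{1}_{\{\alpha\}}$ with $\alpha\in[0,1]$ irrational, realised as the pointwise limit of tent functions peaked at $\alpha$, one has $\sup_{[0,1]}f=1$ while $f$ vanishes at every rational, so a rational-quantifier reduction returns the wrong supremum; similarly, $\mathbb{1}_{K}$ for $K$ a set of irrationals accumulating at a point outside $K$ defeats the rational reduction for deciding `$x\in C_{f}$'. Your finite-intersection argument is immune to this because the closed sets $\{x\in[p,q]: f_{n}(x)\geq r+\frac{1}{2^{k}}\}$, $n\geq N$, detect the peak through their finite intersections; in effect your proof supplies the compactness step that the paper's shortcut skips, and it is the argument that actually establishes the theorem in full generality.

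One step you should make explicit in items \eqref{bai4}--\eqref{bai6}: by the paper's convention, formula classes like $\Sigma_{1}^{1}$ admit only parameters of type $0$ and $1$, so $(\exists x\in[p,q])(\dots f_{n,m}(x)\dots)$ is not literally a $\Sigma_{1}^{1}$-formula while the $f_{n,m}$ appear as third-order objects. One must first replace the continuous functions $f_{n,m}$ by RM-codes, which $\exists^{2}$ computes uniformly (Kohlenbach, as the paper cites); only then does the predicate have the syntactic shape that $\SS^{2}$ decides. The same remark applies to your $\Pi_{1}^{1}$ continuity predicate in items \eqref{bai5} and \eqref{bai6}. This is a standard but, in the higher-order setting of this paper, genuinely necessary conversion.
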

\begin{proof}
For item \eqref{bai1}, let $(f_{n})_{n\in \N}$ be a sequence of continuous functions with pointwise limit $f$.  The formula $(\exists x\in [0,1])(f(x)>q)$ is then equivalent to 
\be\label{tankje}\textstyle
(\exists y\in [0,1]\cap \Q, l\in \N)(\exists N\in \N)(\forall n\geq N)(f_{n}(y)\geq q +\frac{1}{2^{l}} ),
\ee
which is equivalent to an arithmetical formula.  
Thus, the usual interval-halving technique using $\exists^{2}$ yields the supremum of $f$ on any $[p, q]\subset [0,1]$.  

\medskip

For item \eqref{bai2} and \eqref{bai3}, recall that $(\exists x\in [0,1]))(f(x)>q)$ is equivalent to the arithmetical formula \eqref{tankje}.
Thus, in the definition of `$x\in C_{f}$', we can replace the quantifier over $[0,1]$ with a quantifier over $[0,1]\cap \Q$, i.e.\ the former
formula is decidable using $\exists^{2}$.
Moreover, as in the proof of Theorem \ref{plonkook}, this observation yields a modulus of continuity.

\medskip

For item \eqref{bai4}, let $(f_{n,m})_{n,m\in \N}$ be a double sequence as in \eqref{kabel}.  By the definition of repeated limit, the formula $(\exists x\in [0,1])(f(x)>q)$ is equivalent to 
\[\textstyle
(\exists y\in [0,1], l\in \N)(\exists N\in \N)(\forall n\geq N)(\exists M\in \N)(\forall m\geq M)(f_{n,m}(y)\geq q +\frac{1}{2^{l}} ),
\]
which is equivalent to a $\Sigma_{1}^{1}$-formula upon replacing $f_{n, m}$ by RM-codes codes for continuous functions.
Note that $\exists^{2}$ computes such codes (uniformly) by \cite{kohlenbach4}*{\S4} (for Baire space) and \cite{dagsamXIV}*{\S2.2} (for $\R$).  
In light of the above equivalence, $\SS^{2}$ can decide $(\exists x\in [0,1])(f(x)>q)$ and hence compute the required suprema.  
As in the previous paragraph, items \eqref{bai5} and \eqref{bai6} also follow as `$x\in C_{f}$' is now equivalent to a $\Sigma_{1}^{1}$-formula. 
Note that S1-S9-computably in $\SS^{2}$, we can also find a witness to any valid $\Sigma_{1}^{1}$-formula.
\end{proof}
We could obtain versions of items \eqref{q7}-\eqref{q9} from Theorem \ref{plonkook} for Baire 1 and effectively Baire 2 functions.  

\medskip

Thirdly, we have the following theorem.  Note that for Baire 2 functions, we assume that the associated representation is an input for the algorithm.  
We do not wish to complicate the notation with these extra parameters, however. 
\begin{thm}\label{banks}
The following functionals are not computable in any $\SS_{k}^{2}$:
\begin{itemize}
\item  any $\Phi:(\R\di \R)\di (\R^{2}\di \R)$ such that for Baire 2 $f:[0,1]\di [0,1]$ and its representation, we have $\Phi(f, p, q)=\sup_{x\in [p,q]}f(x)$ for $p, q\in [0,1]$. 
\item  any $\Psi:(\R\di \R)\di (\N^{2}\di (\R\di \R))$ such that for Baire 2 $f:[0,1]\di [0,1]$ and its representation,  the sequence $(\Psi(f, n, m))_{n,m\in \N}$ satisfies \eqref{kabel}.  
\item  any $\Xi:(\R\di \R)\di (\N^{2}\di (\R\di \R))$ such that for Baire 2 $f:[0,1]\di [0,1]$ and its representation, $\Xi(f)$ is a modulus of continuity for $f$ . 
\end{itemize}
In particular, each of these functionals computes a Cantor realiser \(given $\SS^{2}$\).  
\end{thm}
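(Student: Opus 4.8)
The plan is to follow the template of Theorem~\ref{plonkook2}: I will show that each of the three functionals, used together with $\SS^{2}$, computes a (weak) Cantor realiser, which is impossible by \cite{dagsamXII} and yields the theorem (as $\SS^{2}=\SS_{1}^{2}$ is computable in every $\SS_{k}^{2}$). To this end, fix $A\subset[0,1]$ and $Y:[0,1]\di\N$ injective on $A$, and recall the function $f$ from \eqref{penny} together with its truncations $f_{n}$ from \eqref{pennyk}. As noted in the proof of Theorem~\ref{plonkook2}, $f$ is continuous at every $y\notin A$, so that any point of $C_{f}$ automatically lies outside $A$. The genuinely new ingredient, absent in Theorem~\ref{plonkook2}, is that I must hand the functionals a \emph{Baire~$2$ representation} of $f$ in the sense of Definition~\ref{flung}, and this representation must be obtainable from $(F_{A},Y)$ without locating the points of $A$.

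The crux is the following. Each truncation $f_{n}$ from \eqref{pennyk} is nonzero only at the points $x\in A$ with $Y(x)\leq n$, of which there are at most $n+1$ since $Y$ is injective on $A$; being supported on a finite set, $f_{n}$ is a finite sum of spike functions and hence \emph{Baire~$1$}. Since $\lim_{n\di\infty}f_{n}=f$ pointwise, the sequence $(f_{n})_{n\in\N}$ is a legitimate Baire~$2$ representation of $f$. Crucially, each $f_{n}$ is computed directly from the defining formula \eqref{pennyk} by evaluating $F_{A}$ and $Y$, so the representation is produced from $(F_{A},Y)$ without any search for elements of $A$; the $f_{n}$ are thereby supplied as Baire~$1$ functions (type-$2$ objects) and no continuous approximations to them are given. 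This is exactly why the representation does not already let $\SS^{2}$ compute $\sup f$: the statement $(\exists x)(f(x)>q)$ unfolds as $(\exists x)(\exists l,N)(\forall n\geq N)(f_{n}(x)>q+\frac{1}{2^{l}})$, in which the discontinuous parameter $f_{n}$ sits inside a quantifier over reals and so lies beyond $\SS^{2}$; this is consistent with, and complementary to, Theorem~\ref{poi}.

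It remains to extract a Cantor realiser from each functional. For the supremum functional $\Phi$, I feed it $(f,(f_{n}))$ and recover the maximiser by the interval-halving and digit-reading argument of item~\eqref{cli2} in Theorem~\ref{plonkook2}, thereby enumerating $A$ and diagonalising to a point outside $A$. For $\Psi$, its output is a double sequence of continuous functions witnessing \eqref{kabel}, i.e.\ an \emph{effectively} Baire~$2$ representation of $f$; item~\eqref{bai4} of Theorem~\ref{poi} then computes $\sup f$ using $\SS^{2}$, and I conclude as for $\Phi$. For the modulus functional $\Xi$, I apply it to each truncation $f_{n}$ (presented as a Baire~$2$ function via the constant representation $(f_{n},f_{n},\dots)$) to obtain a modulus of continuity for $f_{n}$; exactly as in item~\eqref{cli5}, each $C_{f_{n}}$ is then open, dense and carries an R2-representation, so the Baire category theorem for R2-representations (\cite{dagsamVII}*{Theorem 7.10}) produces $y\in\bigcap_{n\in\N}C_{f_{n}}$, and such $y$ lies outside $A$ by \eqref{pennyk}.

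The main obstacle is the middle step. I must ensure that the representation $(f_{n})$ I provide is a bona fide Baire~$2$ representation, yet weak enough that it does not by itself trivialise the supremum---otherwise the claimed non-computability would clash with Theorem~\ref{poi}. The delicate point is therefore to present the $f_{n}$ as bare Baire~$1$ functions computable from $(F_{A},Y)$, so that the passage to the effectively Baire~$2$ form (producing continuous approximations, equivalently locating the spikes of each $f_{n}$) is precisely the work the functionals must perform. Once this is set up correctly, the reading-off and Baire-category arguments are routine adaptations of the corresponding steps in Theorems~\ref{plonkook} and~\ref{plonkook2}.
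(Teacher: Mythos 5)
Your proposal is correct and follows essentially the same route as the paper's proof: the same counterexample $f$ from \eqref{penny} with the truncations \eqref{pennyk} serving as its Baire~2 representation, the digit-reading supremum argument of item~\eqref{cli2} for $\Phi$, the reduction of $\Psi$ to the supremum case via Theorem~\ref{poi}, and the repetition of the item~\eqref{cli5} Baire-category argument for $\Xi$. Your explicit discussion of why handing over the $f_{n}$ as bare Baire~1 functions (rather than with continuous approximations) keeps the representation from trivialising the problem is a point the paper leaves implicit, but it is the same argument.
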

\begin{proof}
For the first item, $f$ as in \eqref{penny} is Baire 2.  Indeed, consider \eqref{pennyk} 
which has only got at most $k+1$ points of discontinuity, i.e.\ $f_{k}$ is definitely Baire~1. 
We trivially have $\lim_{k\di \infty}f_{k}(x)=f(x)$ for $x\in [0,1]$. 
For the second item, combine the results for the first item with item \eqref{bai4} in Theorem \ref{poi}.  

\medskip

For the third item, note that $f_{k}$ as in \eqref{pennyk} is Baire 2 (actually Baire 1).
The proof of item \eqref{cli5} in Theorem \ref{plonkook2} thus goes through. 
\end{proof}
By the above, there is a huge computational difference between the closely related notions of Baire 2 and effectively Baire 2 function; the main difference between the former and the latter
is in the amount of information included in the representation.  We now show that a similar difference occurs for Baire 1 functions with and without the associated representation.
For this once, we assume the representation of Baire~1 functions is \textbf{not} given as an input in Theorem \ref{pff}.  
\begin{thm}\label{pff}
The following functionals are not computable in any $\SS_{2}^{k}$:
\begin{itemize}
\item any $\Phi:(\R\di \R)\di (\R^{2}\di \R)$ such that for Baire 1 $f:[0,1]\di [0,1]$ without its representation, we have $\Phi(f, p, q)=\sup_{x\in [p,q]}f(x)$ for $p, q\in [0,1]$. 
\item  any $\Xi:(\R\di \R)\di (\N^{2}\di (\R\di \R))$ such that for Baire 1 $f:[0,1]\di [0,1]$ without its representation, $\Xi(f)$ is a modulus of continuity for $f$ . 
\end{itemize}
\end{thm}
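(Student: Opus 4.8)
The plan is to reduce both items to the construction of a weak Cantor realiser, exactly as in the proof of Theorem \ref{plonkook2}, but now exploiting that the functionals in question receive a Baire 1 function \emph{without} its representing sequence of continuous functions. First I would fix $A\subset [0,1]$ and $Y:[0,1]\di \N$ with $Y$ injective on $A$, and (using $\mu^{2}$) arrange that $A$ contains no rationals. The key point is that the function $f$ from \eqref{penny}, namely $f(x)=\frac{1}{2^{Y(x)+1}}$ for $x\in A$ and $0$ otherwise, is \emph{Baire 1}: each $f_{k}$ from \eqref{pennyk} has only finitely many points of discontinuity and hence is Baire 1, while $f_{k}\to f$ pointwise shows $f$ is Baire 2; but in fact $f$ is already Baire 1 since its set of discontinuities $A$ is countable, so $f$ is the pointwise limit of the continuous functions obtained by `filling in' the finitely many spikes of each $f_{k}$ by piecewise-linear interpolation. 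Thus $f$ is a legitimate input to $\Phi$ and $\Xi$.

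The crucial difference from Theorem \ref{plonkook2} is that here the representing sequence is \textbf{not} supplied, so the functional must work from the bare function $f:[0,1]\di \R$. For the first item, I would feed $f$ to $\Phi$ and extract the supremum information: since $f$ takes the value $\frac{1}{2^{n_{0}+1}}$ at exactly one point of $A$ for each $n_{0}$ in the range of $Y$, the value $\Phi(f,0,1)$ has the form $\frac{1}{2^{n_{0}+1}}$ and, by the interval-halving/bisection argument already used for item \eqref{cli2} of Theorem \ref{plonkook2}, comparing $\Phi(f,0,r)$ across dyadic subintervals lets one \emph{compute} the unique $y_{0}\in A$ realising that supremum. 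Iterating over the dyadic scales $\frac{1}{2^{n}}$ yields an enumeration of $A$, from which Cantor's diagonal argument (as in \cite{simpson2}*{II.4.9}) produces a real $z\not\in A$ using $\exists^{2}$; this is a weak Cantor realiser, which is not computable in any $\SS_{k}^{2}$ by \cite{dagsamXII}. For the second item, a modulus of continuity $\Xi(f)$ lets one locate points of continuity of $f$, and since the continuity points of $f$ are exactly $[0,1]\setminus A$, this again yields $z\not\in A$, as in item \eqref{cli1} and item \eqref{cli5} of Theorem \ref{plonkook2}.

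The main obstacle I anticipate is justifying that $f$ from \eqref{penny} really is Baire 1 (not merely Baire 2) so that it is a valid input, and more subtly that the suprema and moduli behave as in the cliquish case even though the functional is now denied the representation. For the supremum item this is unproblematic, since the sup of $f$ over any interval is genuinely attained (or approached) by the single spike of largest value therein, and the bisection argument never needs the representation — it only queries $\Phi$ on rational subintervals. For the modulus item the delicate step is that a modulus of continuity is a $\R\times\N^{2}\di\Q^{2}$ object whose defining property \eqref{tok1} ranges only over $x\in C_{f}$; one must check that locating such $x$ (via the Baire-category/R2-representation machinery of \cite{dagsamVII}*{Theorem 7.10}, exactly as in item \eqref{cli5}) still forces $x\not\in A$. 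Once this is in place, both items reduce to the established fact that a weak Cantor realiser is not computable in any $\SS_{k}^{2}$, and the proof concludes.
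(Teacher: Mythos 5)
Your reduction is the same as the paper's --- build the spike function from $(A,Y)$, use the supremum functional plus interval-halving to enumerate $A$, diagonalise, and invoke the fact that no (weak) Cantor realiser is computable in any $\SS_{k}^{2}$ --- but you apply $\Phi$ and $\Xi$ to the \emph{full} function $f$ from \eqref{penny}, whereas the paper only ever applies them to the truncations $f_{k}$ from \eqref{pennyk}. For the paper this is cost-free: each $f_{k}$ has at most $k+1$ discontinuities, hence is trivially Baire 1, and enumerating, for each $k$, the spikes of $f_{k}$ already enumerates $A$. Your route needs the additional lemma that $f$ itself is Baire 1; this is true (countably many discontinuities suffices), and your pointwise-limit construction with shrinking piecewise-linear spikes is a correct justification, but it is an extra step the paper's proof never has to make. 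For the supremum item the two routes are then interchangeable, since the bisection only queries $\Phi$ on rational intervals and never needs the representation.

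The second item is where your variant has a real gap. You propose to run the proof of item \eqref{cli5} of Theorem \ref{plonkook2} ``exactly'', but with the single modulus $F:=\Xi(f)$ of the full $f$. That proof needs, for each $k$, a \emph{total} R2-representation of a dense open set avoiding $E_{k}:=\{y\in[0,1]: f(y)\ge 2^{-k}\}$; in item \eqref{cli5} this works because the relevant open set is $C_{f_{k}}$, which is cofinite, so the modulus of $f_{k}$ supplies a radius at every one of its points. With the full $f$, property \eqref{tok1} constrains $F$ only at points of $C_{f}=[0,1]\setminus A$, while the natural open set $[0,1]\setminus E_{k}$ also contains the small-spike points of $A$, at which the modulus tells you nothing; so you do not obtain the R2-representation that \cite{dagsamVII}*{Theorem 7.10} requires. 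Two repairs work. Either apply $\Xi$ to each truncation $f_{k}$ (these are legitimate inputs, being Baire 1), which is exactly the paper's proof via Theorem \ref{banks}; or keep the full $f$ and exploit your own normalisation $A\cap\Q=\emptyset$: every $q\in \Q\cap[0,1]$ lies in $C_{f}$, the radius $2^{-F(q,k)}$ is rational, and $U_{k}:=\bigcup_{q\in \Q\cap [0,1]}B(q, 2^{-F(q,k)})$ is a dense open set with an RM-code avoiding $E_{k}$, so the effective Baire category theorem \cite{simpson2}*{II.5.8} produces a point of $\bigcap_{k}U_{k}\subseteq [0,1]\setminus A$. With either patch your argument is complete and yields the same conclusion as the paper.
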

\begin{proof}
For the first item, the function $f_{k}(x)$ from \eqref{pennyk} is Baire 1, i.e.\ $\Phi$ as in the theorem provides its supremum.  
Using the usual interval-halving technique, we can find $y\in [0,1]\cap A$ such that $\sup_{x\in [0,1]}f_{k}(x)=y$
Continuing in this fashion, we can enumerate the at most $k+1$ reals in $A$ such that $Y(x)\leq k$.  
Thus, we can enumerate $A$ and Cantor's diagonal argument yields some $z\not\in A$.
The second item follows from the proof of the final item in Theorem \ref{banks}.
\end{proof}
Finally, we have the following curious theorem that may be of independent interest. 
Recall that a set is Baire 1 if its characteristic function is Baire 1.
The general notion of Baire set may be found in \cite{kakuwaku, kodt}; we refer to \cite{lorch} for an introduction and to \cite{dudley}*{\S7} for equivalent definitions, including that of Borel set in Euclidean space. 
\begin{thm}\label{weird}
Given an R2-open Baire 1 set $O\subset [0,1]$ and its representations, $\exists^{2}$ computes an RM-code for $O$.
\end{thm}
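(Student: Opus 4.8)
The plan is to produce the RM-code as an enumeration of all rational balls whose closure lies inside $O$, and to show that this containment is in fact \emph{arithmetical} once the Baire~1 data is exploited, so that $\exists^{2}$ can decide it. First I would use $\exists^{2}$ to convert the continuous functions $(f_{n})_{n\in\N}$ from the Baire~1 representation (Definition~\ref{flung}) into RM-codes, which is possible by \cite{kohlenbach4}*{\S4} and \cite{dagsamXIV}*{\S2.2}. With these codes in hand, define the open sets
\[
G_{M}:=\textstyle\bigcup_{n\geq M}\{x\in[0,1]: f_{n}(x)>\tfrac12\},
\]
each of which is RM-open with an RM-code computable in $\exists^{2}$, being a countable union of superlevel sets of continuous functions. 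The key identity is $O=\bigcap_{M\in\N}G_{M}$: since $F_{O}(x)=\lim_{n}f_{n}(x)\in\{0,1\}$ for every $x$, we have $x\in O$ iff $\limsup_{n}f_{n}(x)\geq\tfrac12$ iff $(\forall M)(\exists n\geq M)(f_{n}(x)>\tfrac12)$, i.e.\ iff $x\in G_{M}$ for all $M$.

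Next I would establish decidability. For rational $q$ and rational $r>0$, write $\overline{B}(q,r)$ for the closed ball. The statement $\overline{B}(q,r)\subseteq G_{M}$, for the compact interval $\overline{B}(q,r)$ and the RM-open set $G_{M}$, is equivalent to the existence of a finite subcover; this is the Heine--Borel theorem for sequential covers, already available in $\WKL_{0}$ and hence provable using $\exists^{2}$ (see \cite{simpson2}). Thus $\overline{B}(q,r)\subseteq G_{M}$ is a $\Sigma^{0}_{1}$-condition, decidable by $\exists^{2}$. Consequently
\[
\overline{B}(q,r)\subseteq O \ \Longleftrightarrow\ (\forall M\in\N)\big(\overline{B}(q,r)\subseteq G_{M}\big),
\]
which is $\Pi^{0}_{2}$ and therefore decidable by $\exists^{2}$. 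This is the crucial point: the naive reading of $\overline{B}(q,r)\subseteq O$ quantifies over all reals in the ball and is $\Pi^{1}_{1}$, but the Baire~1 representation collapses it to an arithmetical condition, which is exactly what keeps us on the $\exists^{2}$-side of the abyss.

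Finally, I would let the RM-code enumerate precisely those rational pairs $(q,r)$ with $r>0$ and $\overline{B}(q,r)\subseteq O$; this sequence is computable by $\exists^{2}$ by the previous paragraph. Soundness is immediate, since every listed ball lies in $O$, so the coded set is contained in $O$. For completeness I would use that $O$ is \emph{open}---which is exactly what R2-openness guarantees---so that any $x\in O$ has some $B(x,\delta)\subseteq O$ and hence lies in a rational ball $B(q,r)$ with $\overline{B}(q,r)\subseteq B(x,\delta)\subseteq O$; thus the coded set is all of $O$. If one prefers an explicit certificate, the R2-representation $Y$ yields $B(x,Y(x))\subseteq O$ directly. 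I expect the main obstacle to be establishing the reduction of the second paragraph rigorously: verifying the identity $O=\bigcap_{M}G_{M}$ from the fact that the pointwise limit exists and is two-valued, and checking that the compact-in-open containment is genuinely decidable by $\exists^{2}$, i.e.\ that the finite-subcover search both succeeds when the containment holds and provably fails otherwise. Once this arithmetical form is secured, assembling the code is routine.
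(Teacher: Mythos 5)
Your proof is correct, and while its overall skeleton matches the paper's (enumerate rational intervals, decide containment in $O$ arithmetically via the Baire~1 data, use R2-openness for completeness), the key decidability step is implemented by a genuinely different mechanism. The paper invokes its Theorem \ref{poi}: since $\mathbb{1}_{O}$ is Baire~1 with a given representation, $\exists^{2}$ computes $\inf_{x\in[p_{n},q_{n}]}\mathbb{1}_{O}(x)$, and the case distinction \eqref{tugger} ($\inf<\frac12$ versus $\inf>0$) settles whether $[p_{n},q_{n}]\subseteq O$; the R2-representation then supplies a fixed filler ball $B(x_{0},\frac{1}{2^{m_{0}}})\subseteq O$ to substitute for failing intervals (keeping the enumeration total), with $O=\emptyset$ handled separately. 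The engine behind Theorem \ref{poi} is the equivalence \eqref{tankje}, i.e.\ that quantifiers over $[0,1]$ may be replaced by quantifiers over $\Q$ for pointwise limits of continuous functions --- no compactness and no RM-codes for the $f_{n}$ are needed. You instead re-derive decidability from scratch: you convert the $f_{n}$ to RM-codes via Kohlenbach's result, exploit the two-valuedness of $\mathbb{1}_{O}$ to obtain $O=\bigcap_{M}G_{M}$ with each $G_{M}$ effectively open, and use Heine--Borel to render the compact-in-open containment $\Sigma^{0}_{1}$, so that $\overline{B}(q,r)\subseteq O$ becomes $\Pi^{0}_{2}$ and hence decidable by $\mu^{2}$. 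Your route is self-contained and isolates exactly why \emph{sets} are tamer than general Baire~1 functions (the $0$--$1$ dichotomy); it also removes the need for the emptiness case split and the filler ball. The paper's route is shorter, reuses machinery valid for arbitrary real-valued Baire~1 functions, and avoids both the coding of the representation and the compactness argument. Two cosmetic points: your enumeration should emit degenerate intervals at stages where the test fails, so that the RM-code is a total sequence (this is precisely the role of the paper's filler ball), and the closed balls $\overline{B}(q,r)$ should be intersected with $[0,1]$.
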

\begin{proof}
Now let $O$ be a R2-open Baire 1 set and note that the RM-code is trivial in case $O=\emptyset$, which is decidable using $\exists^{2}$.  
Hence, we may assume there is $x_{0}\in O\cap \Q$ and $m_{0}$ such that $B(x_{0}, \frac{1}{2^{m_{0}}})\subset O$, the latter thanks to the R2-representation. 
Let $\big((p_{n}, q_{n})\big)_{n\in \N}$ be an enumeration of all intervals in $[0,1]$ with rational end-points.  
Now define the following sequence of intervals:
\be\label{tugger}
 (a_{n}, b_{n}):=
 \begin{cases}
 B(x_{0}, \frac{1}{2^{m_{0}}}) & \textup{ in case } \inf_{x\in [p_{n}, q_{n}]}\mathbb{1}_{O}(x)<\frac{1}{2}\\
(p_{n}, q_{n})& \textup{ in case } \inf_{x\in [p_{n}, q_{n}]}\mathbb{1}_{O}(x)>0
 \end{cases}.
\ee
Recall that $\exists^{2}$ can compute suprema (and infima) for Baire 1 functions (with representations) by Theorem \ref{poi}.  
The case distinction in \eqref{tugger} is decidable and in each case $(a_{n}, b_{n})\subset O$; essentially by definition, $\cup_{n\in \N}(a_{n}, b_{n})$ is an RM-code for $O$.
\end{proof}
%
%
In conclusion, we have established in Theorem \ref{poi} that basic (and some advanced) properties of \emph{Baire 1 and effectively Baire 2 functions} can be witnessed using $\exists^{2}$ and $\SS^{2}$.
By contrast and Theorem \ref{banks}, basic properties of the closely related Baire 2 functions cannot be witnessed by any $\SS_{k}^{2}$.  Thus, despite their close 
connection, these notions give rise to functionals landing on different sides of the abyss.  The role of representations is however subtle, following Theorem \ref{pff}.
We discuss this phenomenon in detail Remark \ref{expla}, while a short intuitive explanation is as follows: for Baire 1 $f:\R\di \R$, one observes that for any $q\in \R$:
\be\label{leckli2}
(\exists x\in \R)(f(x)>q)\asa A(\lambda n. f_{n}, q )
\ee
where $A$ is an arithmetical formula and $f$ is the pointwise limit of the sequence $(f_{n})_{n\in \N}$ in $C(\R)$.  
Since the right-hand side of \eqref{leckli2} is arithmetical, most properties of Baire 1 functions can be decided using $\exists^{2}$.
Similarly, for effectively Baire 2 functions, one obtains a version of \eqref{leckli2} where the right-hand side is $\Sigma_{1}^{1}$, i.e.\ decidable via the Suslin functional.
By contrast, the definition of Baire 2 is weaker and does \textbf{not} guarantee \eqref{leckli2} (or any variation in which the right-hand side is second-order), as is also abundantly clear from our negative results.  

\subsection{Semi-continuity}\label{SC}
We study the notion of \emph{upper and lower semi-continuity} due to Baire (\cite{beren2}).  
Curiously, we \emph{can} define $C_{f}$ for a usco $f:[0,1]\di \R$ using $\exists^{2}$, but computing an $x\in C_{f}$ is \emph{not} possible via any $\SS_{k}^{2}$ (see Theorems \ref{timtam} and \ref{tam}), even assuming an oscillation function.  
Requiring a `modulus of semi-continuity' (see Def.\ \ref{klung}), $\exists^{2}$ can compute some $x\in C_{f}$ (Theorem \ref{timtam}).  
Moreover, a modulus of {semi-continuity} is in general not computable in any $\SS_{k}^{2}$ by Corollary~\ref{corke}.  

\medskip

\noindent
First of all, we use the following standard definitions.  
\bdefi[Semi-continuity]\label{klung} 
For $f:[0,1]\di \R$, we have the following:
\begin{itemize}
\item $f$ is \emph{upper semi-continuous} (usco) at $x_{0}\in [0,1]$ if for any $y>f(x_{0})$, there is $N\in \N$ such that for all $z\in B(x_{0}, \frac{1}{2^{N}})$, we have $f(z)<y$,
\item $f$ is \emph{lower semi-continuous} (lsco) at $x_{0}\in [0,1]$ if for any $y<f(x_{0})$, there is $N\in \N$ such that for all $z\in B(x_{0}, \frac{1}{2^{N}})$, we have $f(z)>y$,
\item a \emph{modulus of usco} for $f$ is any function $\Psi:[0,1]\di \R^{+}$ such that :
\[\textstyle
(\forall k\in \N) (\forall y\in B(x, \Psi(x,k)))( f(y)< f(x)+\frac{1}{2^{k}}   ).
\]  
We also refer to $\Psi$ as a `usco modulus'.  
\end{itemize}
\edefi
Secondly, we have the following theorem.  Note that usco functions on the unit interval are bounded above, but not necessarily bounded below. 
\begin{thm}\label{timtam} For usco $f:[0,1]\di \R$, we have the following:
\begin{enumerate}
\renewcommand{\theenumi}{\alph{enumi}}
\item  the set $C_{f}$ is definable using $\exists^{2}$,\label{fr1}
\item a modulus of continuity is definable using $\exists^{2}$,\label{fr2}
\item given a modulus of usco $\Psi:[0,1]\di \R^{+}$ for $f$, $\exists^{2}$ computes $x\in C_{f}$,\label{fr3}
\item a modulus of lsco \textbf{restricted to $C_{f}$}, is computable in $\exists^{2}$,\label{fr4}
\item for $f$ bounded below, $\inf_{x\in [p,q]}f(x)$ is computable in $\exists^{2}$ for $p, q\in [0,1]$. \label{fr5}
\end{enumerate}
\end{thm}
\begin{proof}
For item \eqref{fr1}, it is a matter of definitions to show the equivalence between `$g:\R\di \R$ is continuous at $x\in \R$' and `$g:\R\di \R$ is usco and lsco at $x\in \R$'.  
Thus, for usco $f:[0,1]\di \R$, `$f$ is discontinuous at $x\in [0,1]$' is equivalent to 
\be\label{dumbel}\textstyle
(\exists l\in \N)(\forall k\in \N){(\exists y\in B(x, \frac{1}{2^{k}})}(f(y)\leq f(x)-\frac{1}{2^{l}} ),
\ee
which expresses that $f$ is not lsco at $x\in [0,1]$.  Now, \eqref{dumbel} is equivalent to 
\be\label{dumbel1}\textstyle
(\exists l\in \N)(\forall k\in \N)\underline{(\exists r\in B(x, \frac{1}{2^{k}})\cap \Q)}(f(r)\leq f(x)-\frac{1}{2^{l}} ),
\ee
where in particular the underlined quantifier in \eqref{dumbel1} has rational range due to $f$ being usco.  
Since \eqref{dumbel1} is arithmetical, $\exists^{2}$ allows us to define $D_{f}$ (and $C_{f}$).  

\medskip

For item \eqref{fr3}, fix usco $f:[0,1]\di \R$ with modulus $\Psi:[0,1]\di \R^{+}$ and note that for $x\in [0,1]$ and $q\in \Q$, we have by definition that:
\[\textstyle
 (\exists N\in \N)(\forall z\in B(x, \frac{1}{2^{N}}))( f(z)\geq  q  ) \asa  (\exists M\in \N)\underline{(\forall r\in B(x, \frac{1}{2^{M}})\cap \Q)}( f(r)\geq q  ),
\]
where we abbreviate the right-hand side (arithmetical) formula by $A(x, q)$.  We note that the above equivalence even goes through for $N=M$.  
Define $O_{q}:=\{ x\in [0,1]: f(x)< q \vee A(x, q)\}$ using $\exists^{2}$ and note that $D_{q}:= [0,1]\setminus O_{q}$ is closed and (by definition) nowhere dense.  
Next, we show that $D_{f}\subset \cup_{q\in \Q}D_{q}$.  
Indeed, in case $x_{0}\in D_{f}$, $f$ cannot be lsco at $x_{0}\in [0,1]$, i.e.\ we have 
\be\label{tingel}\textstyle
(\exists l\in \N)(\forall N\in \N)(\exists z\in B(x_{0}, \frac{1}{2^{N}}))(f(z)\leq f(x_{0})-\frac{1}{2^{l}}).
\ee
Let $l_{0}$ be as in \eqref{tingel} and consider $q_{0}\in\Q$ such that $f(x_{0})>q_{0}> f(x_{0})-\frac{1}{2^{l_{0}}}$. 
By definition, $f(x_{0})\geq q_{0}$ and $\neg A(x_{0}, q_{0})$, i.e.\ $x_{0}\in D_{q_{0}}$ as required.  
Next, define $Y(x)$ as $\Psi(x, k_{0} )$ in case $k_{0}$ is the least $k\in \N$ with $f(x)+\frac{1}{2^{k}}  \leq q$ (if such exists), and zero otherwise. 
In case $x \in O_{q} \wedge f(x)<q$, then $B(x, Y(x))\subset O_{q}$.  In case $x\in O_{q} \wedge A(x, q)$, then $\mu^{2}$ can find $M_{0}$, the least $M\in \N$ as in $A(x, q)$, which is such that $B(x, \frac{1}{2^{M_{0}}})\subset O_{q}$.  

\medskip

Hence, in case $x\in O_{q}$, we can compute (using $\mu^{2}$) some ball around $x$ completely within $O_{q}$.   The latter kind of representation of open sets is called the \emph{R2-representation} in \cite{dagsamVII}.  
Now, the Baire category theorem implies that there exists $y\in \cap_{q\in \Q}O_{q}$, which satisfies $y\not \in D_{f}$ by the previous paragraph.  
By \cite{dagsamVII}*{Theorem~7.10}, $\exists^{2}$ can compute such $y\in \cap_{q\in \Q}O_{q}$, thanks to the R2-representation of open sets.
Essentially, the well-known constructive proof goes through (see e.g.\ \cite{bish1}*{p.\ 87}) and one uses the R2-representation to avoid the use of the (countable) Axiom of Choice.     
Thus, item \eqref{fr3} has been established.  

\medskip

Thirdly, for item \eqref{fr4}, fix lsco $f:[0,1]\di \R$ and define $G_{0}:([0,1]\times \N)\di \N$ as follows:  $G_{0}(x, k)$ is the least $N\in \N$ such that
\be\label{tonx}\textstyle
(\forall q\in B(x, \frac{1}{2^{N}})\cap \Q)(  f(q) < f(x)+\frac{1}{2^{k+1}}),   
\ee
where we point out the `$k+1$' at the end.  
In case $x\in C_{f}$, $f$ is also usco at $x$, i.e.\ for fixed $k\in \N$, the formula \eqref{tonx} holds for large enough $N$.  Similar to the first part of the proof, namely using that $f$ is lsco everywhere, one establishes that for any $k\in \N$:
\be\label{toon}\textstyle
(\forall y\in B(x, \frac{1}{2^{G_{0}(x, k)}}))(  f(y) < f(x)+\frac{1}{2^{k}}),   
\ee
where we point out the `$k$' in the final part.    Thus, $G_{0}$ is a modulus of usco in case $x\in C_{f}$; by symmetry, usco functions have a modulus of lsco on $C_{f}$ as well. 
Item \eqref{fr2} now also follows.  

\medskip


Finally, let $f:[0,1]\di \R$ be bounded below and usco.  By the definition of the usco, we have the following equivalence for any $q\in \R$:
\be\label{tiop}
(\exists x \in [0,1])(f(x)<q)\asa (\exists  r\in [0,1]\cap \Q)(f(r)<q).  
\ee
The usual interval-halving technique using $\exists^{2}$ now yields any required infimum.  
\end{proof}
We note that \eqref{tiop} is false for `$<$' replaced by `$>$' for basic usco functions.  This observation is of course related to the second item of Theorem \ref{tam}.

\medskip

Thirdly, we have the following theorem showing that while $C_{f}$ is definable using $\exists^{2}$, the latter cannot compute any $x\in C_{f}$ (and the same for any $\SS_{k}^{2}$), even if we assume an oscillation function (see Def.\ \ref{oscf}).  
\begin{thm}\label{tam}
The following functionals are not computable in any $\SS_{k}^{2}$:
\begin{itemize}
\item any functional $\Phi:(\R\di \R)\di \R$ such that for all usco $f:[0,1]\di [0,1]$, we have $\Phi(f)\in C_{f}$. 
\item  any functional $\Psi:(\R\di \R)\di (\R^{2}\di \R)$ such that for all usco $f:[0,1]\di [0,1]$, we have $\Psi(f, p, q)=\sup_{x\in [p,q]}f(x)$ for $p, q\in [0,1]$. 
\item  any functional $\zeta:(\R\di \R)\di ( (\N\times \N)\di \Q^{2})$ such that for all usco $f:[0,1]\di [0,1]$ and any $n, m\in \N$, $\zeta(f, m,n)$ is an open interval such that $C_{f}=\bigcap_{n\in \N}\big(\cup_{m\in \N}\zeta(f, m,n) \big)$.
\item any functional $\eps:(\R\di \R)\di ((\R\times\N)\di \R)$ such that for all usco $f:[0,1]\di [0,1]$, $\eps(f)$ is a modulus of continuity.  
\item any functional $\Xi:(\R\di \R)\di ((\R\times\N)\di \R)$ such that for all usco $f:[0,1]\di [0,1]$, $\Xi(f)=(f_{n})_{n\in \N}$ is a sequence of continuous functions with pointwise limit $f$.  
\end{itemize}
In particular, each of these functionals computes a Cantor realiser \(given $\exists^{2}$\).  
These results remain valid if we also require an oscillation functional as input. 
\end{thm}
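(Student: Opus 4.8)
The plan is to reuse the function $f$ from \eqref{penny} verbatim, the key new observation being that this $f$ is in fact \emph{upper semi-continuous}, not merely cliquish. Indeed, fix $A\subset[0,1]$ and $Y:[0,1]\di\N$ injective on $A$, let $f$ be as in \eqref{penny}, and note that $f$ maps into $[0,\tfrac12]\subset[0,1]$. At $x_{0}\notin A$ we have $f(x_{0})=0$ and $f$ is continuous there, since for any $\eps>0$ only finitely many points carry value $\geq\eps$ and a small neighbourhood avoids them. At $x_{0}\in A$ with $f(x_{0})=\frac{1}{2^{Y(x_{0})+1}}$ and any $y>f(x_{0})$, the set $\{z:f(z)\geq y\}$ consists of those $z\in A$ whose $Y$-value lies below a fixed bound, hence is finite by injectivity of $Y$ and excludes $x_{0}$; a sufficiently small neighbourhood of $x_{0}$ therefore satisfies $f<y$, so $f$ is usco at $x_{0}$. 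Thus every functional of Theorem~\ref{tam} accepts $f$ as a legitimate input.

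Once usco-ness is secured, the first three items transfer directly from the proof of Theorem~\ref{plonkook2}, which only used the weaker property of cliquishness. Concretely, a point $\Phi(f)\in C_{f}$ must lie outside $A$ (as $f$ is discontinuous exactly on $A$), giving a Cantor realiser as in item~\eqref{cli1}; the supremum functional of the second item reproduces the interval-halving computation of $y_{0}$ and the ensuing enumeration of $A$ from item~\eqref{cli2}; and the third item follows from the effective Baire category theorem for RM-representations (\cite{simpson2}*{II.5.8}), exactly as in item~\eqref{cli3}. For the modulus-of-continuity item I would apply the functional to each truncation $f_{k}$ from \eqref{pennyk}, which is again usco and has only finitely many discontinuities; as in item~\eqref{cli5}, the resulting modulus endows each $C_{f_{k}}$ with an R2-representation, and the Baire category theorem for R2-representations (\cite{dagsamVII}*{Theorem 7.10}) yields $y\in\cap_{k}C_{f_{k}}$ with $y\notin A$.

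The genuinely new case is the Baire-$1$-representation functional $\Xi$. Here I would first invoke the classical fact that a bounded upper semi-continuous function is of Baire class~$1$, so that $f$ from \eqref{penny} is a legitimate input and $\Xi(f)=(f_{n})_{n\in\N}$ is a sequence of continuous functions with pointwise limit $f$. Given this representation, item~\eqref{bai1} of Theorem~\ref{poi} shows that $\exists^{2}$ computes $\sup_{x\in[p,q]}f(x)$ for rational $p,q$, which reduces this case to the supremum computation of the second item and hence again produces a Cantor realiser (given $\exists^{2}$). Finally, for the closing sentence one checks that $\osc_{f}=f$ pointwise for $f$ as in \eqref{penny}: continuity off $A$ forces oscillation $0$, while at $x\in A$ the supremum over small balls equals $f(x)$ and the infimum is $0$ by density of $[0,1]\setminus A$, so an oscillation functional is available for free and cannot help. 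The main obstacle I anticipate is the careful verification of usco-ness at points of $A$ together with, for $\Xi$, the appeal to the Baire-$1$ classification of bounded usco functions and Theorem~\ref{poi}; everything else is a routine transcription of the cliquish case.
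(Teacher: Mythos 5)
Your proposal is correct and follows essentially the same route as the paper's proof: the same counterexample function from \eqref{penny} shown to be usco, the same transfer of the argument of Theorem \ref{plonkook2} (including the truncations \eqref{pennyk} for the modulus item and the observation $\osc_{f}=f$ for the oscillation clause), and the same appeal to Theorem \ref{poi} for the Baire-1-representation functional $\Xi$. The extra detail you supply (the explicit usco verification at points of $A$ and the classical fact that bounded usco functions are Baire 1) merely fills in steps the paper leaves implicit.
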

\begin{proof}
The function $f$ from \eqref{penny} is usco, which follows from the observation that for any $\eps>0$, there are only finitely many $x\in A$ such that $f(x)>\eps$.  
Now repeat the proof of Theorem \ref{plonkook2} for usco functions to obtain the first three items of the theorem.
The final item follows from Theorem \ref{poi} as the latter expresses that $\exists^{2}$ computes the supremum of Baire 1 functions (given their representation).
%
%
\end{proof}
There is no contradiction between Theorems~\ref{poi} and~\ref{tam} as follows: 
while usco functions are Baire 1, Theorem \ref{tam} does not assume a Baire 1 (or effectively Baire~2) representation is given as an input, while of course Theorem~\ref{poi} does.  
The following corollary shows that a modulus of usco is indeed hard to compute.  
\begin{cor}\label{corke}
The following functional is not computable in any $\SS_{k}^{2}$:
\begin{center}
any functional $\Phi:(\R\di \R)\di ((\R\times \N)\di \R)$ such that $\Phi(f)$ is a modulus of usco for any usco function $f:[0,1]\di [0,1]$. 
\end{center}
\end{cor}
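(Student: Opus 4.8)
The plan is to derive the result by reduction, combining the positive content of Theorem~\ref{timtam} with the negative content of Theorem~\ref{tam}. Suppose toward a contradiction that some $\SS_{k}^{2}$ computes a functional $\Phi$ as in the statement, i.e.\ $\Phi(f)$ is a modulus of usco whenever $f:[0,1]\di[0,1]$ is usco. I would first record the standard fact that $\SS_{k}^{2}$ computes $\exists^{2}$: deciding arithmetical (indeed $\Sigma_{1}^{1}$) formulas is subsumed by deciding $\Sigma_{k}^{1}$-formulas, so any procedure that invokes $\exists^{2}$ as an oracle can be run relative to $\SS_{k}^{2}$.

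The key step is the composition. Given usco $f:[0,1]\di[0,1]$, use the assumed functional to compute, via $\SS_{k}^{2}$, the modulus of usco $\Psi:=\Phi(f)$. By the third item of Theorem~\ref{timtam}, $\exists^{2}$ computes some $x\in C_{f}$ on input $f$ \emph{together with} a modulus of usco for $f$. Feeding $\Psi=\Phi(f)$ into this procedure, and using that $\SS_{k}^{2}$ computes $\exists^{2}$, the entire composite is computable in $\SS_{k}^{2}$. In this way $\SS_{k}^{2}$ would compute a functional $(\R\di\R)\di\R$ sending every usco $f:[0,1]\di[0,1]$ to a point of $C_{f}$.

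This, however, directly contradicts the first item of Theorem~\ref{tam}, which asserts precisely that no such functional is computable in any $\SS_{k}^{2}$; moreover the contradiction persists even if an oscillation functional is supplied as an additional input, matching the robustness clause in that theorem. The contradiction shows that no $\SS_{k}^{2}$ computes a modulus-of-usco functional, which is the claim.

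Since the argument is a clean two-step reduction through already-established results, I do not anticipate a serious technical obstacle. The one point requiring care is that the composition respect Kleene's S1-S9 framework: specifically, that feeding the $\SS_{k}^{2}$-computed object $\Phi(f)$ as the modulus-of-usco input to the $\exists^{2}$-procedure of Theorem~\ref{timtam} yields a genuinely $\SS_{k}^{2}$-computable functional of $f$ alone. This is routine, since composition of S1-S9-computable functionals relative to a fixed oracle that computes the required sub-oracles is again S1-S9-computable relative to that oracle, and $\Phi(f)$ is by hypothesis a valid modulus of usco, so Theorem~\ref{timtam} applies verbatim.
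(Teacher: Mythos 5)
Your proof is correct and is exactly the paper's argument: the paper's proof of Corollary~\ref{corke} is the one-line instruction to combine Theorems~\ref{timtam} and~\ref{tam}, which is precisely your reduction (compose the hypothetical modulus-of-usco functional with the third item of Theorem~\ref{timtam}, using that $\SS_{k}^{2}$ computes $\exists^{2}$, to contradict the first item of Theorem~\ref{tam}). You have simply spelled out the composition and oracle-closure details that the paper leaves implicit.
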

\begin{proof}
Combine Theorems \ref{timtam} and \ref{tam}. 
\end{proof}
Next, the well-known Urysohn lemma states the existence of a \emph{continuous} separating function for disjoint closed sets of reals (see e.g.\ \cite{dagsamVII}*{\S4}).  
By the following theorem, the characteristic function of one of these closed sets is a \emph{usco and cliquish} separating function that is computable in $\exists^{2}$ while an \emph{usco and quasi-continuous} separating function is hard to compute.  We note that usco functions are Baire 1 and therefore cliquish.  
\begin{thm}\label{urythm1}
We have the following.
\begin{itemize}
\item for closed disjoint $C_{0}, C_{1}\subset \R$, $\exists^{2}$ computes usco \(or lsco\) $f:\R\di [0,1]$ such that $x \in C_{i}\di f(x)=i$ for any $x\in \R$. 
\item the functional $\SS_{k}^{2}$ cannot compute any functional $U:(\R\di \R)\di (\R\di \R)$ with the following specification:
\begin{center}
for closed disjoint $C_{0}, C_{1}\subset \R$, $U(C_{0}, C_{1})$ is a usco and quasi-continuous function on $\R$ such that $U(C_{0}, C_{1})(x)=i\asa x \in C_{i}$ for any $x\in \R$. 
\end{center}
The functional $\exists^{3}$ can compute such a functional $U$. 
\end{itemize}
\end{thm}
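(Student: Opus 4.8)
The plan is to treat the two items separately, dispatching the two easy directions first and reserving the real work for the non-computability claim. For the positive first item I would take the characteristic function of one of the given sets: writing $C_{1}$ via $F_{C_{1}}$, the function $f:=\mathbb{1}_{C_{1}}$ has range $\{0,1\}\subset[0,1]$, equals $1$ on $C_{1}$ and $0$ on $C_{0}$ (as $C_{0}\cap C_{1}=\emptyset$), and is usco precisely because $C_{1}$ is closed—at $x\in C_{1}$ the usco clause is vacuous, while at $x\notin C_{1}$ the open complement of $C_{1}$ supplies a ball on which $f\equiv 0$. Dually, $\mathbb{1}_{\R\setminus C_{0}}$ is lsco and realises the same separation with the roles of $0$ and $1$ interchanged. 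Since these functions are literally the set data of Definition~\ref{char}, $\exists^{2}$ trivially suffices. For the $\exists^{3}$ clause of the second item I would use the classical Urysohn function: given $\exists^{3}$ one decides $(\exists y\in\R)(F_{C_{i}}(y)=1\wedge|x-y|<r)$ and hence computes the distances $d(x,C_{i})$, so $f(x):=\frac{d(x,C_{0})}{d(x,C_{0})+d(x,C_{1})}$ (with the evident conventions when a set is empty, emptiness being $\exists^{3}$-decidable) is a continuous—hence both usco and quasi-continuous—separating function with $f(x)=i\asa x\in C_{i}$.

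The heart of the matter is the $\SS_{k}^{2}$-non-computability, which I would obtain by reducing the weak Cantor realiser, known not to be computable in any $\SS_{k}^{2}$ by \cite{dagsamXII}. Fix a nonempty $A\subset[0,1]$ and $Y:[0,1]\di\N$ bijective on $A$, and form the nowhere-dense clone $\tilde{A}\subset(0,1)$ exactly as in \eqref{iki}, so that $\tilde{A}$ accumulates only at $0$ and each dyadic band $[2^{-n-1},2^{-n})$ carries at most one clone point. I set $C_{1}:=\tilde{A}\cup\{0\}$, whose characteristic function is definable in $\exists^{2}$ as in Corollary~\ref{simplecor}, and $C_{0}:=[0,1]\setminus V$, where $V$ is a thin open neighbourhood of $C_{1}$ with per-band radius shrinking fast (say $2^{-n-10}$ inside band $n$). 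Then $C_{0}$ is closed, nonempty, and disjoint from $C_{1}$; deciding membership $y\in V$ amounts to deciding the $\Sigma_{1}^{1}$-statement $(\exists a\in\tilde{A})(|y-a|<\delta(a))$ relative to the oracles $F_{A},Y$, which the Suslin functional $\SS^{2}$ handles. Crucially this construction stays inside the $\SS_{k}^{2}$-hierarchy, which is all the reduction needs.

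Now suppose towards a contradiction that $U$ were computable in some $\SS_{k}^{2}$ and put $f:=U(C_{0},C_{1})$, so that $f$ is usco and quasi-continuous with $f=1$ on $\tilde{A}\subset C_{1}$ and $f=0$ on $C_{0}=[0,1]\setminus V$. Since $f$ is quasi-continuous, Theorem~\ref{plonkook}\eqref{q4} lets $\exists^{2}$ compute $\sup_{x\in[p,q]}f(x)$ on rational intervals. Searching the rational intervals for one with $\sup_{x\in[p,q]}f(x)<\tfrac12$ must succeed, because $\tilde{A}$ is nowhere dense and $V$ is thin, so some rational interval lies inside $C_{0}$, where $f\equiv 0$. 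As $f=1$ on all of $\tilde{A}$, any interval returned by this search is clone-free, whence its midpoint $z$ satisfies $z\notin\tilde{A}$. Recalling that the clone transports $(A,Y)$ to an instance $(\tilde{A},Y')$ with $Y'$ again bijective and $\exists^{2}$-computable, the point $z$ is exactly the output demanded of a weak Cantor realiser on $(\tilde{A},Y')$. Assembling the construction of $C_{0},C_{1}$ (in $\SS^{2}$), the oracle call to $U$ (in $\SS_{k}^{2}$), and the $\exists^{2}$ extraction would place a weak Cantor realiser in $\SS_{\max(k,1)}^{2}$, contradicting \cite{dagsamXII}.

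The step I expect to be the main obstacle is the design of $C_{0}$. Disjoint closed sets in $\R$ have no common accumulation points, so one cannot force the separator to be \emph{discontinuous} along $\tilde{A}$—a continuous separator always exists—and the hardness must instead be harvested from the values of an arbitrary separator. For the supremum test to locate clone-free intervals reliably, one must pin $f$ to $0$ off thin neighbourhoods of $\tilde{A}$, which is exactly what the fat set $C_{0}=[0,1]\setminus V$ accomplishes at the cost of a $\Sigma_{1}^{1}$ membership test. Verifying that $V$ can be made thin enough to keep $C_{0}$ nonempty and rich in clone-free intervals, while still definable within the $\SS_{k}^{2}$-hierarchy, is the delicate point; once this is in place the remaining estimates are routine.
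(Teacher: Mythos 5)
Your first item and the $\exists^{3}$ clause are fine and essentially match the paper: the paper likewise takes $f=\mathbb{1}_{C_{1}}$ (dually $\mathbb{1}_{\R\setminus C_{0}}$ for lsco), and where you build the classical distance-quotient separator, the paper simply cites \cite{dagsamVII}*{\S4-5}. The problem is the heart of the theorem, the $\SS_{k}^{2}$-non-computability, where your reduction has a genuine gap at its final step. You end by locating a single rational interval with $\sup_{x\in[p,q]}f(x)<\tfrac12$ and outputting its midpoint $z\notin\tilde{A}$, claiming this is "exactly the output demanded of a weak Cantor realiser on $(\tilde{A},Y')$". But a point outside $\tilde{A}$ is worthless: $\tilde{A}$ is nowhere dense and (after the rationals are removed from $A$) contains no rationals, so $z=\tfrac12$ already does the job with no oracle at all. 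What a weak Cantor realiser must produce, on the original input $(A,Y)$, is a point outside $A$; a point outside the clone $\tilde{A}$ gives no information about membership in $A$, since the clone map transports $a\in A$ to $a-q$ for a rational shift $q$ that you cannot determine without already knowing $a$. Hence no contradiction with \cite{dagsamXII} is obtained, and the fattened set $C_{0}=[0,1]\setminus V$ that you flagged as the delicate point buys nothing.

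The missing idea, which is how the paper proceeds, is to use the supremum functional not to find \emph{one} good interval but to decide membership of \emph{every} rational ball in the complement. Taking $C:=\tilde{A}\cup\{0\}$ and applying $U(\emptyset,C)$ (note that $C_{0}=\emptyset$ suffices; no thin neighbourhood is needed), quasi-continuity gives the suprema via Theorem \ref{plonkook}, and usco-ness gives that the supremum is attained, whence $B(x,\frac{1}{2^{N}})\subset [0,1]\setminus C$ if and only if $\sup_{y\in B(x,\frac{1}{2^{N}})}U(\emptyset,C)(y)<1$. Applied to all rational balls, this equivalence yields an RM-code of the open complement, hence of the closed countable set $C$; by \cite{dagsamXIII}*{Lemma 4.11} such a set can then be enumerated, the enumeration of $\tilde{A}$ pulls back (using $\mu^{2}$ to recover the rational shift in each band) to an enumeration of $A$, and Cantor's diagonal argument finally produces a real outside $A$. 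It is this full enumeration-plus-diagonalisation, not the exhibition of a single clone-free point, that contradicts the $\SS_{k}^{2}$-non-computability of Cantor realisers; your proposal stops exactly one (essential) step short of it.
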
  
\begin{proof}
For the first item, let the sets $C_{i}$ be as in the first item and consider $\lambda x.\mathbb{1}_{C_{1}}(x)$.  The latter is definable using $\exists^{2}$, satisfies $f(x)=i\asa x \in C_{i}$, and is usco.  
For the latter claim, in case $x\not \in C_{1}$, $1_{C_{1}}(y)=0$ for $y$ sufficiently close to $x$ as the complement of $C_{1}$ is open, i.e.\ the definition of usco is satisfied at $x$.  
In case $x\in C_{1}$, the definition is trivially satisfied.  For the lsco case, consider the characteristic function of the complement of $C_{0}$. 

\medskip

For the second item, the final sentence follows from the results in \cite{dagsamVII}*{\S4-5}.
Now let $C\subset \R$ be closed and put $O:=[0,1]\setminus C$.  
Let $U$ be as in the theorem and note that by Theorem \ref{plonkook}, we have access to the supremum of quasi-continuous functions.  
Now consider the equivalence
\be\label{tomp}\textstyle
B(x, \frac{1}{2^{N}})\subset O \asa \sup_{y\in B(x, \frac{1}{2^{N}})}U( \emptyset,C)(y)<1.
\ee
The reverse implication in \eqref{tomp} follows by the definition of $U$.  The forward implication in \eqref{tomp} is immediate 
as usco functions attain their supremum.  Since the right-hand side of \eqref{tomp} is decidable using $\exists^{2}$, define $H(x)$ as the smallest $N\in \N$ such that the right-hand side holds.  
Then $\cup_{q\in \Q\cap O} B(q, \frac{1}{2^{H(q)}})$ is an RM-code of $O$, i.e.\ the functional as in the second item of the theorem allows us to convert open (and closed) sets of reals to 
RM-codes.  By \cite{dagsamXIII}*{Lemma 4.11}, countable RM-closed sets can be enumerated, i.e.\ finding RM-codes for closed sets translates to computing a Cantor realiser modulo Cantor's diagonalisation argument. 
\end{proof}
We note that variations of the Urysohn lemma involving \emph{quasi-continuous} separating functions have been studied in \cite{uly1,uly2}.
The Urysohn lemma itself, where the separating function is continuous, has been studied in Kleene's computability theory in \cite{dagsamVII}.  
One could similarly study the Tietze extension theorem where the extended function is only assumed to be semi-continuous; the same results would come to the fore. 

\medskip

Finally, Cousin's lemma is also interesting in this context, as follows.
\begin{thm}
The following is not computable in any $\SS_{k}^{2}$:
\begin{center}
Any functional $\delta:(\R\di \R)\di \R$ such that for all \textbf{usco} $\Psi:[0,1]\di \R^{+}$, $\delta(\Psi)=(x_{0}, \dots, x_{k})$ is such that $\cup_{i\leq k}B(x_{i}, \Psi({x_{i}}))$ covers $[0,1]$.
\end{center}
The following is computable in $\exists^{2}$:
\begin{center}
Any functional $\delta:(\R\di \R)\di \R$ such that for all \textbf{lsco} $\Psi:[0,1]\di \R^{+}$, $\delta(\Psi)=(x_{0}, \dots, x_{k})$ is such that $\cup_{i\leq k}B(x_{i}, \Psi({x_{i}}))$ covers $[0,1]$.
\end{center}
\end{thm}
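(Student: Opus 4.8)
The plan is to mirror the proof of Theorem~\ref{kop}, treating the lower semi-continuous case as the easy ($\exists^{2}$) side and the upper semi-continuous case as the hard side that encodes a weak Cantor realiser. The guiding dichotomy is that an lsco gauge cannot drop suddenly in a limit, so rational centres always suffice for a covering, whereas a usco gauge may collapse near a point, so a finite sub-covering can be forced to use a structurally special centre.

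For the lsco part, fix lsco $\Psi:[0,1]\di \R^{+}$ and let $(q_{n})_{n\in \N}$ enumerate $\Q\cap [0,1]$. First I would show $\cup_{n\in \N}B(q_{n}, \Psi(q_{n}))$ covers $[0,1]$: given $x\in[0,1]$, lower semi-continuity at $x$ applied to $y:=\Psi(x)/2<\Psi(x)$ yields $\delta>0$ with $\Psi(z)>\Psi(x)/2$ for all $z\in B(x,\delta)$, so any rational $q$ with $|q-x|<\min(\delta,\Psi(x)/2)$ satisfies $|q-x|<\Psi(q)$, i.e.\ $x\in B(q,\Psi(q))$. Then $\mu^{2}$ (hence $\exists^{2}$) searches for the least $m$ with $\cup_{n\leq m}B(q_{n},\Psi(q_{n}))\supseteq[0,1]$, the coverage of $[0,1]$ by finitely many rational-centred balls being an arithmetical condition on the (real) endpoints; compactness guarantees such $m$ exists. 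This is exactly the quasi-continuous argument of Theorem~\ref{kop}.

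For the usco part, I would reduce an arbitrary instance of the weak Cantor realiser problem. Fix non-empty $A\subset[0,1]$ and $Y:[0,1]\di\N$ bijective on $A$, use $\mu^{2}$ to delete rationals from $A$, and pass to the nowhere-dense set $\tilde{A}$ of \eqref{iki}, which accumulates only at $0$ and whose membership is $\exists^{2}$-decidable via the rational-shift trick. The crucial design choice is that, unlike the cliquish gauge \eqref{penny38}, a usco gauge cannot be small on $\tilde{A}$ and large off it, since then nearby off-$\tilde{A}$ points would violate $\limsup_{z\to x}\Psi(z)\leq\Psi(x)$ at $x\in\tilde{A}$. Instead I would fix a small open set $O\supseteq\tilde{A}\cup[0,\eta)$, definable by $\exists^{2}$ as a union of tiny rational intervals about the points of $\tilde{A}$, and set $\Psi$ equal to a tiny constant $\eps<\tfrac{1}{16}$ on $O$ and to a fixed large constant on the closed complement $F:=[0,1]\setminus O$; placing the small values on the open set and the large value on the closed set makes $\Psi$ upper semi-continuous everywhere. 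Since $O$ clusters at $0$ and meets each dyadic band in at most one point, $F$ contains a sub-interval of $[\tfrac12,1]$ of length bounded below, whose midpoint lies at distance more than $\eps$ from $O$ and hence can only be covered by an $F$-centred ball. Thus any finite sub-covering returned by $\delta$ contains some centre $x_{j}\in F$, so $x_{j}\neq 0$ and $x_{j}\notin\tilde{A}$; reasoning exactly as in Theorem~\ref{kop}, such a centre yields a real outside $A$, whence $\delta$ together with $\exists^{2}$ computes a weak Cantor realiser, which is not computable in any $\SS_{k}^{2}$ by \cite{dagsamXII}.

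The main obstacle is precisely the upper semi-continuity of the constructed gauge: the natural ``small on the encoded set, large elsewhere'' template used for cliquish functions in \eqref{penny38} is \emph{not} usco, so the roles of open and closed must be interchanged, and one must then verify both that $O$ can be chosen $\exists^{2}$-definable (via rational shifts, as for $\tilde{A}$) and that $F$ retains interior points bounded away from $0$ so that the ``tiny balls cannot reach interior points of $F$'' argument genuinely forces an escaping centre rather than relying on a measure estimate. A secondary point to check is that the chosen $\eps$ and interval radii are compatible so that $\Psi>0$ throughout while the usco inequalities hold at every point of $F$, including the boundary $\partial O\subset F$.
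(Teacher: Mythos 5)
Your lsco half is correct and is exactly the paper's argument: lower semi-continuity guarantees that the rational-centred balls $B(q,\Psi(q))$ already cover $[0,1]$, and $\mu^{2}$ then finds a finite initial segment that covers, this being decidable given $\exists^{2}$. Your usco half also follows the paper's high-level strategy (modify \eqref{penny38} to something usco, force a centre $x_{j}\notin\tilde{A}\cup\{0\}$, conclude as in Theorem \ref{kop}), but your concrete gauge has a genuine gap.

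The gap is the computability of your open set $O$, hence of $\Psi=\eps\cdot\mathbb{1}_{O}+M\cdot\mathbb{1}_{F}$, from the inputs $A$, $Y$ and $\exists^{2}$. This is not a side condition: the entire reduction consists of feeding a gauge \emph{computed from} $(A,Y,\exists^{2})$ into $\delta$, so if $\Psi$ is not so computable, the conclusion that $\delta$ plus $\exists^{2}$ computes a weak Cantor realiser does not follow. Your justification ``via rational shifts, as for $\tilde{A}$'' does not work here. The rational-shift trick decides \emph{exact} membership $x\in\tilde{A}$ because the required witness in $A$ is $x+q$ for a \emph{rational} $q$: one evaluates the characteristic function of $A$ on the countable set of points $x+q$ and applies $\mu^{2}$. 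Deciding \emph{proximity}, i.e.\ whether some point of $\tilde{A}$ lies within $r_{n}$ of $x$, needs a witness $a\in A$ of the form $x+\theta+q$ with $\theta$ an unknown (typically irrational) offset, $|\theta|<r_{n}$; this is a genuine existential quantifier over $\R$ that $\exists^{2}$ cannot resolve. In effect, an $\exists^{2}$-definition of your $O$ would localise every point of $\tilde{A}$ to within $r_{n}$ from the data, which is precisely the kind of information the hardness results rule out. So the step ``$O$ can be chosen $\exists^{2}$-definable'', which you flagged as needing verification, in fact fails, and the construction cannot be repaired in that form.

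The paper's fix, \eqref{penny382}, avoids proximity altogether: the value of the gauge at $x$ depends only on \emph{decidable} facts about $x$ itself, namely which dyadic band contains $x$ and whether $x\in\tilde{A}$; but the value pattern is the reverse of yours and decays with the band, namely $\frac{1}{2^{n+5}}$ on $\tilde{A}$ in band $n$ and $\frac{1}{2^{n+6}}$ on the rest of band $n$. Your (correct) observation that a usco gauge cannot be small on $\tilde{A}$ and large off it steered you away from the right construction: a usco gauge \emph{can} be slightly \emph{larger} on $\tilde{A}$ than off it, since $\tilde{A}$ has at most one point per band, so the larger value occurs only at isolated points, and upward jumps at isolated points are compatible with upper semi-continuity. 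The escaping centre is then forced not by your distance argument but by the measure argument of Theorem \ref{kop}: all radii in band $n$ are at most $\frac{1}{2^{n+5}}$, so a finite subcover whose centres all lie in $\tilde{A}\cup\{0\}$ has total measure well below $1$, whence some centre lies outside $\tilde{A}\cup\{0\}$ and the conclusion of Theorem \ref{kop} applies.
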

\begin{proof}
For the second part, one can show that for lsco $\Psi:[0,1]\di \R^{+}$, the countable union $\cup_{q\in [0,1]\cap \Q}B(q, \Psi(q))$ covers $[0,1]$.
Thus, $\mu^{2}$ can find a finite sub-covering as required.  The first part is proved in (exactly) the same way as in proof in Theorem~\ref{kop}, as the functional $\Psi_{\textbf{P}}$ in the latter proof is also usco, which is proved in (exactly) the same way as in Footnote \ref{flagelle}.
\end{proof}
We could obtain similar results for the Vitali covering theorem as studied in \cite{dagsamVI}.

\medskip

Finally, we could study Baire $1^{*}$-functions (see e.g.\ \cite{mentoch}), but the associated results would be similar to the above due to the following theorem.   
\begin{thm}[\cite{mentoch}*{Lemma 5}]\label{b1sd}
For bounded Baire $1^{*}$ $f:[0,1]\di \R$, there exists usco $g_{0}:[0,1]\di \R$ and lsco $g_{1}:[0,1]\di \R$ such that $f=g_{0}+g_{1}$ on $[0,1]$. 
\end{thm}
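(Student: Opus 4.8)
The plan is to reduce the statement to the standard \emph{closed-cover} characterisation of Baire $1^{*}$ functions and then to exhibit $g_{0}$ and $g_{1}$ explicitly as \emph{monotone} (hence semi-continuous) limits of finite sums of elementary semi-continuous pieces. First I would invoke (or establish) that a bounded $f:[0,1]\di \R$ is Baire $1^{*}$ if and only if there is a sequence of closed sets $(F_{n})_{n\in\N}$ with $\bigcup_{n}F_{n}=[0,1]$ and each restriction $f\restriction F_{n}$ continuous. I may assume the $F_{n}$ are \emph{increasing}: if $f\restriction F$ and $f\restriction G$ are continuous for closed $F,G$, then $f\restriction (F\cup G)$ is continuous (a convergent sequence in $F\cup G$ splits into subsequences lying in $F$ resp.\ $G$, and closedness forces the limit into the relevant set), so I replace $F_{n}$ by $\bigcup_{i\le n}F_{i}$. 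Put $F_{0}:=\emptyset$ and $n(x):=\min\{n: x\in F_{n}\}$, noting $\{x: n(x)>k\}=[0,1]\setminus F_{k}$ is open. By the Tietze extension theorem each continuous $f\restriction F_{n}$ extends to a continuous $h_{n}:[0,1]\di\R$ (with $h_{0}:=0$); setting $c_{n}:=h_{n}-h_{n-1}$ yields continuous functions vanishing on $F_{n-1}$ (since $h_{n}=h_{n-1}=f$ there). The key bookkeeping identity is then, for every $x$,
\[
f(x)=h_{n(x)}(x)=\sum_{k=1}^{\infty}c_{k}(x)\,\big(1-\mathbb{1}_{F_{k-1}}(x)\big),
\]
because $k\le n(x)$ is equivalent to $x\notin F_{k-1}$, so only the terms with $k\le n(x)$ survive and the sum is really finite at each point.

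Next I would split each continuous $c_{k}$ into its positive and negative parts $c_{k}=c_{k}^{+}-c_{k}^{-}$ and define
\[
g_{1}:=\sum_{k=1}^{\infty}c_{k}^{+}\cdot\mathbb{1}_{[0,1]\setminus F_{k-1}},
\qquad
g_{0}:=-\sum_{k=1}^{\infty}c_{k}^{-}\cdot\mathbb{1}_{[0,1]\setminus F_{k-1}} .
\]
By the displayed identity $g_{0}+g_{1}=\sum_{k}c_{k}\,(1-\mathbb{1}_{F_{k-1}})=f$, so it remains only to verify the semi-continuity of the two summands. Here the elementary facts are that the indicator of an open set is lsco, the indicator of a closed set is usco, a product of two non-negative lsco functions is lsco (using $\liminf_{y\to x}a(y)b(y)\ge a(x)b(x)$ for non-negative $a,b$), and finite sums preserve the lsco/usco property. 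Thus every partial sum of $g_{1}$ is lsco and every partial sum of $g_{0}$ is usco.

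The final and main point is to pass from the partial sums to the infinite sums. Since each term of $g_{1}$ is non-negative, its partial sums increase to $g_{1}$, and a pointwise supremum of lsco functions is lsco; dually the partial sums of $g_{0}$ are non-positive and decrease to $g_{0}$, and a pointwise infimum of usco functions is usco. Both limits are real-valued, since at any fixed $x$ only the finitely many terms with $k\le n(x)$ are non-zero. I expect this monotone-limit step to be the delicate part: it is precisely here that the Baire $1^{*}$ hypothesis is used in an essential way. Indeed, the example $f=\mathbb{1}_{[0,1]\setminus\{1/k:k\ge 1\}}$ is Baire $1^{*}$ yet the naive choice of $g_{0}$ as the usco envelope $\limsup_{y\to x}f(y)\vee f(x)$ of $f$ makes $f-g_{0}$ fail to be lsco at $0$, so the layered construction, together with the monotonicity that guarantees semi-continuity of the limit, cannot be circumvented. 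The only other genuine obstacle is the very first step, namely producing the increasing closed cover; if one takes the closed-cover property as the \emph{definition} of Baire $1^{*}$ this is immediate, and otherwise it follows from a routine Baire-category argument applied to the restrictions $f\restriction P$ on perfect sets $P$.
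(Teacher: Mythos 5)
Your proposal is correct, and in fact there is nothing in the paper to compare it against: the paper does not prove Theorem \ref{b1sd} at all, but quotes it as Lemma 5 of the cited reference and only remarks afterwards that, \emph{based on the proof there}, $\exists^{2}$ computes $g_{0},g_{1}$ from $f$ and an upper bound. Judged on its own, your layered construction is sound at every step: the reduction to an \emph{increasing} closed cover (the union-of-two-closed-sets argument for continuity of the restriction is valid), the Tietze extensions $h_{n}$ with $c_{n}=h_{n}-h_{n-1}$ vanishing on $F_{n-1}$, the telescoping identity $f(x)=\sum_{k}c_{k}(x)\big(1-\mathbb{1}_{F_{k-1}}(x)\big)$ (only the terms with $k\le n(x)$ survive), the semicontinuity of each summand (indicator of an open set is lsco, a product of non-negative lsco functions is lsco, negatives of lsco are usco), and the monotone passage to the limit (a pointwise supremum of lsco functions is lsco, a pointwise infimum of usco functions is usco), with real-valuedness of $g_{0},g_{1}$ guaranteed by the pointwise finiteness of the sums. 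Two small observations. First, your argument never actually uses the boundedness of $f$, so you prove something at least as general as the quoted statement; this is not a defect, though it means your proof is not a literal reconstruction of the cited lemma, whose hypotheses it weakens. Second, the only place where you lean on an external fact is the equivalence of Baire $1^{*}$ with the countable-closed-cover property; since the paper never defines Baire $1^{*}$ except by reference, and the closed-cover property is the standard working definition in this literature, taking it as your starting point is legitimate, but if one insists on the portion-on-every-perfect-set definition, that equivalence is a genuine (if classical) theorem and deserves more than the closing clause you give it.
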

One readily verifies that $\exists^{2}$ computes $g_{0}, g_{1}$ from $f$ and its upper bound on $[0,1]$, based on the proof in \cite{mentoch}.  
%
%

\medskip

In conclusion, we have established that basic operations on usco functions can be computationally \emph{hard} as in Theorem \ref{tam}.  
By contrast, the same operations for Baire 1 functions can be performed using $\exists^{2}$, following Section \ref{SC2}.
There is no contradiction here, as we crucially assume Baire 1 functions are given together with their representation, yielding in particular \eqref{leckli2}.  
Of course, usco functions are Baire 1 but the (Baire) representation of usco functions is hard to compute by the final item in Theorem~\ref{tam}.   

\subsection{Bounded variation and around}\label{trank}
We study the notion of \emph{bounded variation} (abbreviated $BV$) due to Jordan (\cite{jordel}) and the closely related \emph{normalised} $BV$-functions. 
Nonetheless, basic properties concerning normalised $BV$-functions give rise to functionals computable in $\exists^{2}$ (Theorem \ref{kok}) while these functionals generalised to $BV$-functions are not computable in any $\SS_{k}^{2}$ (Theorem~\ref{bvt1}).  A detailed explanation of this phenomenon is found in Remark \ref{expla} at the end of this section. 

\medskip

Firstly, the notion of \emph{bounded variation} was first explicitly\footnote{Lakatos in \cite{laktose}*{p.\ 148} claims that Jordan did not invent or introduce the notion of bounded variation in \cite{jordel}, but rather discovered it in Dirichlet's 1829 paper \cite{didi3}.} introduced by Jordan around 1881 (\cite{jordel}) yielding a generalisation of Dirichlet's convergence theorems for Fourier series.  
Indeed, Dirichlet's convergence results are restricted to functions that are continuous except at a finite number of points, while $BV$-functions can have infinitely many points of discontinuity, as shown by Jordan in \cite{jordel}*{p.\ 230}.
\bdefi[Bounded variation and around]\label{varvar}
\begin{itemize}  
\item The function $f:[a,b]\di \R$ \emph{has bounded variation} on $[a,b]$ if there is $k_{0}\in \N$ such that $k_{0}\geq \sum_{i=0}^{n} |f(x_{i})-f(x_{i+1})|$ 
for any partition $x_{0}=a <x_{1}< \dots< x_{n-1}<x_{n}=b  $.\label{donp}
\item A function $f:[a,b]\di \R$ \emph{has normalised bounded variation} if it is in $BV$, $f(a)=0$, and $f(x+):=\lim_{y\di x+}f(y)$ equals $f(x)$ for all $x \in [a, b)$.  
\item The \emph{total variation} of a $BV$-function $f:[a, b]\di \R$ is defined as follows:
\be\label{tomb}\textstyle
V_{a}^{b}(f):=\sup_{a\leq x_{0}< \dots< x_{n}\leq b}\sum_{i=0}^{n} |f(x_{i})-f(x_{i+1})|.
\ee
\item A function $f:\R\di \R$ is \emph{regulated} if the left and right limits $f(x-)$ and $f(x+)$ exist for all $x\in \R$.   
\end{itemize}
\edefi
Scheeffer studies discontinuous regulated functions in \cite{scheeffer} (without using the term `regulated'), while Bourbaki develops Riemann integration based on regulated functions in \cite{boerbakies}.
The \emph{normalised} $BV$-functions (\emph{NBV} in \cite{voordedorst}) play the following role in real analysis: the Riemann-Stieltjes integral provides a natural one-to-one correspondence between the dual of the space of continuous functions and the space of \emph{normalised} functions of bounded variation (see e.g. \cite{voordedorst}*{Theorem 4.31}).

\medskip

Secondly, the fundamental theorem for $BV$-functions was proved by Jordan.
\begin{thm}[Jordan decomposition theorem, \cite{jordel}*{p.\ 229}]\label{drd}
A $BV$-function $f : [0, 1] \di \R$ is the difference of  two non-decreasing functions $g, h:[0,1]\di \R$.
\end{thm}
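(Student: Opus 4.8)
The plan is to reduce the statement to the classical total-variation construction. First I would define the variation function $V:[0,1]\di \R$ by setting $V(x):=V_{0}^{x}(f)$ (with $V_{0}^{0}(f):=0$), using the total variation of Definition \ref{varvar}; since $f$ is $BV$, each value $V(x)$ is a well-defined real bounded by the witness $k_{0}$ from Definition \ref{varvar}. I would then put $g:=V$ and $h:=V-f$, so that $g-h=f$ holds trivially, and the whole theorem reduces to proving that both $g$ and $h$ are non-decreasing.

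The crux is the additivity of total variation: for $0\leq x<y\leq 1$ one has $V_{0}^{y}(f)=V_{0}^{x}(f)+V_{x}^{y}(f)$. I would prove the inequality $V_{0}^{x}(f)+V_{x}^{y}(f)\leq V_{0}^{y}(f)$ by concatenating an arbitrary partition of $[0,x]$ with one of $[x,y]$ to form a partition of $[0,y]$ and then taking suprema. For the reverse inequality, I would take an arbitrary partition of $[0,y]$ and refine it by inserting the point $x$; the triangle inequality $|f(a)-f(c)|\leq |f(a)-f(b)|+|f(b)-f(c)|$ ensures this refinement does not decrease the associated sum, while the refined partition splits into a partition of $[0,x]$ and one of $[x,y]$, bounding the sum by $V_{0}^{x}(f)+V_{x}^{y}(f)$; taking the supremum over partitions of $[0,y]$ finishes the additivity claim.

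Granting additivity, monotonicity is immediate. For $x<y$ we have $g(y)-g(x)=V_{x}^{y}(f)\geq 0$, since $V_{x}^{y}(f)$ is a supremum of non-negative sums, so $g$ is non-decreasing. Likewise $h(y)-h(x)=V_{x}^{y}(f)-(f(y)-f(x))$, and the two-point partition $\{x,y\}$ of $[x,y]$ gives $V_{x}^{y}(f)\geq |f(y)-f(x)|\geq f(y)-f(x)$, so $h$ is non-decreasing as well. Combined with $g-h=f$, this yields the desired decomposition.

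The main obstacle will be the reverse inequality in the additivity lemma, which is the only genuinely non-formal step: everything hinges on the fact that inserting a point into a partition cannot decrease its variation sum, a direct consequence of the triangle inequality applied at the inserted point. The remaining verifications are routine, and I would expect the write-up to be short once additivity is in hand.
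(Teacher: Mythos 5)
Your proof is correct and is the standard total-variation argument: $g(x):=V_{0}^{x}(f)$, $h:=g-f$, with additivity of variation (via partition refinement and the triangle inequality) giving monotonicity of both. The paper does not actually prove Theorem \ref{drd} — it cites it as Jordan's classical result — but where it needs the decomposition concretely (in the proof of Theorem \ref{kok}) it uses precisely your construction, so your approach matches the paper's.
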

Thirdly, $BV$-functions are regulated while $\exists^{2}$ computes $f(x+)$ and $f(x-)$ if the latter exist.
The following definition is similar to that of a modulus of continuity.     
\bdefi
For regulated $f:\R\di \R$, the function $M:(\R\times \N)\di \N$ is a \emph{modulus of regulation} if for all $k\in \N$, $x\in \R$, we have
\[\textstyle
(\forall y\in (x, x+\frac{1}{2^{M(x, k)+1}}))(|f(x+)-f(y)|<\frac{1}{2^{k}} )\wedge (\forall z\in (x-\frac{1}{2^{M(x, k)+1}}, x))(|f(x-)-f(z)|<\frac{1}{2^{k}} ), 
\]
i.e.\ $M$ is a modulus of convergence for the left and right limits. 
\edefi

Fourth, we have the following theorem, to be contrasted with Theorem \ref{kok}. 
\begin{thm}\label{bvt1}
The following functionals are not computable in any $\SS_{k}^{2}$:
\begin{enumerate}
\renewcommand{\theenumi}{\alph{enumi}}
\item  any functional $\Phi:(\R\di \R)\di (\R^{2}\di \R)$ such that for $f:[0,1]\di \R$ in $BV$, we have $\Phi(f, p, q)=\sup_{x\in [p,q]}f(x)$ for $p, q\in [0,1]$. \label{bv1}
\item  any functional $\Psi:(\R\di \R)\di (\R\di \R)^{2}$ such that for $f:[0,1]\di \R$ in $BV$, $\Psi(f)=(g, h)$ such that $f=g-h$ on $[0,1]$ and $f, g$ and non-decreasing. \label{bv2}
\item  any functional $\Xi:(\R\di \R)\di \R$ such that for $f:[0,1]\di \R$ in $BV$, $f$ is continuous at $\Xi(f)\in [0,1]$.\label{bv3}
\item  any functional $\kappa:(\R\di \R)\di \R$ such that for $f:[0,1]\di \R$ in $BV$, the Fourier series converges to $f(\kappa(f))$ at $\kappa(f)\in [0,1]$.\label{bv5}
\item any functional $\zeta:(\R\di \R)\di \R$ such that for $f:[0,1]\di \R$ in BV with $\int_{0}^{1}f(x)dx=0$, we have $f(\zeta(f))=0$ \(Bourbaki \cite{boerbakies}\).\label{bv4}
\item any functional $\eps:(\R\di \R)\di \R$ such that for $f:[0,1]\di \R$ in BV,  $F(x):=\lambda x.\int_{0}^{x}f(t)dt$ is differentiable at $\eps(f)$ and $f(\eps(f))=F'(\eps(f))$ .\label{bv6}
\item any functional $\varepsilon:(\R\di \R)\di (\R\times \N^{2})\di \Q^{2}$ such that for $f:[0,1]\di \R$ in BV,  $\varepsilon(f)$ is a modulus of continuity for $f$.\label{bv7}
\item any functional $\delta:(\R\di \R)\di (\R\times \N)\di \N$ such that for $f:[0,1]\di \R$ in BV,  $\delta(f)$ is a modulus of regulation for $f$.\label{bv8}

\end{enumerate}
In particular, any of items \eqref{bv1}-\eqref{bv7} compute a Cantor realiser \(given $\exists^{2}$\).
\end{thm}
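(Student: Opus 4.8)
The plan is to prove each item by reducing its negation to the construction of a \emph{Cantor realiser}, exactly as in the proofs of Theorems \ref{plonkook2} and \ref{tam}. Thus, fix an arbitrary $A\subset[0,1]$ and $Y:[0,1]\di\N$ that is injective on $A$; since $Y$ maps $A$ injectively into $\N$, the set $A$ is necessarily countable. Consider the function $f$ from \eqref{penny}. The crucial new observation, which makes the earlier arguments available in the $BV$ setting, is that this $f$ has \emph{bounded variation}: because $Y$ is injective on $A$, the values $\frac{1}{2^{Y(a)+1}}$ for $a\in A$ are distinct elements of $\{2^{-n-1}:n\in\N\}$, whence $\sum_{a\in A}\frac{1}{2^{Y(a)+1}}\leq 1$ and the total variation of $f$ is at most $2$ (each partition point $a\in A$ contributes at most $2f(a)$). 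I will also record the following facts about $f$, all immediate from $A$ being countable: $C_{f}=[0,1]\setminus A$, so $f$ is continuous at $x$ iff $x\notin A$; the one-sided limits satisfy $f(x+)=f(x-)=0$ for every $x$; and $f=0$ almost everywhere, so that $\int_{0}^{1}f=0$, the Fourier coefficients of $f$ all vanish, and $F(x):=\int_{0}^{x}f(t)\,dt\equiv 0$.

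With these facts, the direct items are quick. For item \eqref{bv3}, $\Xi(f)\in C_{f}=[0,1]\setminus A$ is immediately a point outside $A$. For items \eqref{bv5}, \eqref{bv4}, and \eqref{bv6}, the Fourier series of $f$ is identically zero and $F\equiv 0$ with $F'\equiv 0$, so in each case the defining specification forces the value of $f$ at the output point to equal $0$; any such point lies outside $A$. Hence all four items yield a Cantor realiser directly, just as for items \eqref{cli1}, \eqref{cli7}, \eqref{cli8} in Theorem \ref{plonkook2}. For item \eqref{bv1} I will reuse the interval-halving argument from item \eqref{cli2} of Theorem \ref{plonkook2}: comparing $\Phi(f,\cdot,\cdot)$ on halved intervals computes each $y_{0}\in A$ realising a supremum, thereby enumerating $A$, and Cantor's diagonal argument then produces a real outside $A$.

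The three remaining items require the slightly heavier machinery already developed. For the Jordan decomposition, item \eqref{bv2}, I would use that a non-decreasing function has only countably many jump discontinuities, of which those of size $\geq 2^{-m}$ are finitely many and locatable by $\exists^{2}$ via its one-sided limits; enumerating these and avoiding them in a nested sequence of shrinking intervals, $\exists^{2}$ computes a point $w$ at which both $g$ and $h$ are continuous, whence $f=g-h$ is continuous at $w$ and $w\in C_{f}=[0,1]\setminus A$. For the modulus of continuity, item \eqref{bv7}, I would invoke the truncations $f_{k}$ from \eqref{pennyk}, which are again of bounded variation (finitely many spikes) with finite discontinuity set; the proof of item \eqref{cli5} in Theorem \ref{plonkook2} then applies, producing open dense sets with R2-representations and a common point of continuity outside $A$ via the effective Baire category theorem \cite{dagsamVII}*{Theorem 7.10}. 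For the modulus of regulation, item \eqref{bv8}, I would exploit that $f(x+)=f(x-)=0$ everywhere: a modulus of regulation then pins down around any $x$ a punctured neighbourhood on which $f<\frac{1}{2^{k}}$, so that any $a\in A$ there has $Y(a)\geq k$; running the nested-interval construction of item \eqref{cli6} in Theorem \ref{plonkook2} on these neighbourhoods yields $w_{0}\in[0,1]$ with $w_{0}\notin A$.

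Finally, each reduction above uses only $\exists^{2}$, which $\SS_{k}^{2}$ computes, so a hypothetical $\SS_{k}^{2}$-algorithm for any listed functional would produce a Cantor realiser computable in $\SS_{k}^{2}$, which in particular yields a weak Cantor realiser and contradicts \cite{dagsamXII}; as $A$ and $Y$ were arbitrary this proves all items, and the displayed reductions for \eqref{bv1}--\eqref{bv7} are precisely what shows these functionals compute a Cantor realiser given $\exists^{2}$. I expect the main obstacle to be twofold: first, cleanly verifying that $f$ (and the truncations $f_{k}$) lie in $BV$ with an explicit variation bound, since this is exactly what transplants the \eqref{penny}-based arguments into the $BV$ world; and second, item \eqref{bv2}, where one must extract a genuine point of continuity from an \emph{arbitrary}, non-unique monotone decomposition --- the key sub-fact being that $\exists^{2}$ locates the continuity points of a monotone function by enumerating its finitely-many large jumps and avoiding them along a nested interval.
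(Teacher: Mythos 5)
Your proof is correct and follows essentially the same route as the paper: the same function $f$ from \eqref{penny} (verified to be in $BV$ with variation at most $2$), the same direct arguments for items \eqref{bv3}--\eqref{bv6}, the interval-halving enumeration for \eqref{bv1}, the enumeration of discontinuities of monotone functions via $\exists^{2}$ for \eqref{bv2}, and the truncations \eqref{pennyk} for \eqref{bv7}. The only divergence is item \eqref{bv8}, where you reuse the nested-interval (Cantor intersection) construction of item \eqref{cli6} of Theorem \ref{plonkook2}, while the paper instead observes that the modulus of regulation yields R2-representations of the open dense sets $O_{k}=[0,1]\setminus D_{k}$ and invokes the effective Baire category theorem (\cite{dagsamVII}*{Theorem 7.10}); both mechanisms are sound and both already occur in the paper's Theorem \ref{plonkook2}.
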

\begin{proof}
Consider $f:[0,1]\di \R$ as in \eqref{penny} and note that this function has bounded variation as \eqref{tomb} is bounded by $2$.  
As in the proof of Theorem \ref{plonkook2}, item \eqref{bv1} yields a Cantor realiser.  
As shown in \cite{dagsamXIII}, $\exists^{2}$ can enumerate the points of discontinuity of a monotone function on the reals, i.e.\ item \eqref{bv2} yields a Cantor realiser in the same way.  
Moreover, $f$ is discontinuous at each $x\in A$ and hence item \eqref{bv3} computes a Cantor realiser.   
Similarly, $\int_{0}^{1}f(x)dx=0$ and the Fourier coefficients of $f$ are all $0$ and items \eqref{bv5}, \eqref{bv4}, and \eqref{bv6} therefore compute a Cantor realiser.   
For item \eqref{bv7}, note that \eqref{pennyk} is in $BV$ and observe that the associated proof goes through.  

\medskip

For item \eqref{bv8}, let $M$ be a modulus of regulation for $f$ as in \eqref{penny}.  
Define the finite set $D_{k}=\{x\in [0,1]: f(x)\geq \frac{1}{2^{k}}\}$ and note that $C_{f}=[0,1]\setminus \cup_{k\in \N}D_{k}$ and $B(x, \frac{1}{2^{M(x, k+2)+1}})\cap D_{k}=\emptyset$. 
Thus, $O_{k}:=[0,1]\setminus D_{k}$ is open, dense, and has an R2-representation.  The Baire category theorem for R2-representations (\cite{dagsamVII}*{Theorem~7.10}) provides $y\in \cap_{k\in \N}O_{k}$.
By definition, the latter implies $f(y)=0$ and hence $y\not \in A$ by \eqref{penny}.
In this way, we obtain a Cantor realiser. 
\end{proof}
The following corollary shows that requiring the total variation as input in Theorem \ref{bvt1} does not change the hardness much.  
\begin{cor}
The following are not computable in any $\SS_{k}^{2}$: the functionals from Theorem \ref{bvt1} modified to take as input $f\in BV$ together with $V_{0}^{1}(f)$.
\end{cor}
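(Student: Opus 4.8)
The plan is to show that handing the functional the extra datum $V_0^1(f)$ changes nothing, because for the spike functions appearing in the proof of Theorem~\ref{bvt1} the total variation is a number that $\exists^2$ can produce by itself---provided we run the reduction in the \emph{bijective} case, i.e.\ we aim to compute a \emph{weak} Cantor realiser rather than a mere Cantor realiser. Concretely, I would fix $A\subset[0,1]$ and $Y:[0,1]\di\N$ that is \emph{bijective} on $A$, and build $f$ exactly as in \eqref{penny} (and the truncations $f_k$ as in \eqref{pennyk} for item~\eqref{bv7}). Since $Y$ is injective into $\N$, the set $A$ is countable, so $[0,1]\setminus A$ is co-countable and hence dense; this density is all that the variation computation below requires, so there is no need to first delete rationals from $A$.

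The key step is an exact evaluation of the total variation of such a spike function. If $f$ vanishes off $A$, is positive on $A$, and $[0,1]\setminus A$ is dense, then any interior spike at $a\in A$ can be straddled by points where $f=0$ and so contributes $2f(a)$ to the sum in \eqref{tomb}, whereas a spike sitting at an endpoint contributes only $f(a)$; the triangle inequality shows that no partition does better, giving $V_0^1(f)=2\sum_{a\in A}f(a)-f(0)-f(1)$. Because $Y$ maps $A$ bijectively onto $\N$, we have $\sum_{a\in A}f(a)=\sum_{n\in\N}\tfrac{1}{2^{n+1}}=1$, so $V_0^1(f)=2-f(0)-f(1)$; the two endpoint values are obtained by a single evaluation of $f$, hence $\exists^2$ computes $V_0^1(f)$ outright. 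The same count yields $V_0^1(f_k)=\big(2-\tfrac{1}{2^{k}}\big)-f_k(0)-f_k(1)$ for the functions \eqref{pennyk}, again computable by $\exists^2$.

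With these values available, each reduction in the proof of Theorem~\ref{bvt1} goes through unchanged once the modified functional is fed the pair $\big(f,V_0^1(f)\big)$: its output still lets us extract a point outside $A$---by interval-halving for \eqref{bv1}, by enumerating the discontinuities of the monotone pieces for \eqref{bv2}, by producing a point of continuity or a zero of $f$ for \eqref{bv3}--\eqref{bv6}, and by the R2-representation Baire-category argument for \eqref{bv7}--\eqref{bv8}. Since $Y$ is a bijection of $A$ onto $\N$, the point so produced is precisely a \emph{weak} Cantor realiser, and as no weak Cantor realiser is computable in any $\SS_k^2$ (\cite{dagsamXII}), none of the modified functionals is either.

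The one genuine subtlety---and the reason the reduction must be run in the bijective case---is that for a merely \emph{injective} $Y$ the displayed formula becomes $V_0^1(f)=\sum_{n\in\range(Y|_A)}\tfrac{1}{2^{n}}-f(0)-f(1)$, and pinning down $\range(Y|_A)$ means deciding $(\exists x\in[0,1])\big(x\in A\wedge Y(x)=n\big)$ for each $n$, a quantifier over the reals that $\exists^2$ cannot evaluate (it needs $\exists^3$). Passing to bijective $Y$ forces this range to be all of $\N$ and removes the obstruction, so that the total variation degenerates to an explicit constant; this is the only new ingredient beyond Theorem~\ref{bvt1}, and I expect it to be the main point requiring care.
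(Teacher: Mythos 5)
Your proposal is correct and takes essentially the same route as the paper: restrict to the case where $Y$ is bijective on $A$, observe that the total variation of the spike function \eqref{penny} then degenerates to a constant that $\exists^{2}$ can supply by itself, and conclude that the modified functionals still compute \emph{weak} Cantor realisers, which are not computable in any $\SS_{k}^{2}$. Your explicit evaluation $V_{0}^{1}(f)=2-f(0)-f(1)$ is in fact more careful than the paper's, which asserts $V_{0}^{1}(f)=1$ (a harmless slip, since each interior spike contributes twice its height and the exact constant plays no role in the reduction).
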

\begin{proof}
In case $Y:[0,1]\di \R$ is surjective, \eqref{penny} is in $BV$ and $V_{0}^{1}(f)=1$.  Hence, the modification from the corollary 
still yields \emph{weak} Cantor realisers.  
\end{proof}
Thirdly, restricting the functionals in Theorem \ref{bvt1} to \emph{normalised} $BV$-functions and other classes, the computational hardness drops to $\exists^{2}$.  
To this end, we need the following basic theorem.  
\begin{thm}[\cite{dagsamXIV}*{Theorem 2.16}]\label{renk}
For regulated $f:[0,1]\di \R$, $\exists^{2}$ can enumerate all jump discontinuities of $f$, i.e.\ all $x\in [0,1]$ where $f(x+)\ne f(x-)$.
\end{thm}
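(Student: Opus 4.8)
The plan is to reduce the problem to \emph{detecting a large jump inside a rational interval}, to show that this detection is captured by an \emph{arithmetical} predicate about the values of $f$ at rationals, and then to have $\exists^{2}$ scan and refine a rational grid. Concretely, for $x\in[0,1]$ a jump $f(x+)\ne f(x-)$ means $|f(x+)-f(x-)|\geq 2^{-k}$ for some $k$, so it suffices, for each fixed $k$, to enumerate the (as we shall see, finitely many) points with jump at least $2^{-k}$ and then dovetail over $k$. Since $\exists^{2}$ is equivalent to $\mu^{2}$ and hence yields arithmetical comprehension, the whole algorithm will go through as soon as the detection predicate is shown to be arithmetical in the type-$2$ object $\lambda r.f(r)$ obtained by restricting $f$ to $\Q$.

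The first ingredient is the classical fact that for regulated $f$ and each $\eps>0$, there are only \emph{finitely many} $x\in[0,1]$ with $|f(x+)-f(x-)|\geq\eps$: choosing a step function $g$ with $\|f-g\|_{\infty}<\eps/3$, any $x$ outside the finitely many breakpoints of $g$ has $|f(x+)-f(x-)|<2\eps/3<\eps$. The second and crucial ingredient is the detection equivalence. For rationals $a<b$ define
\[
P(a,b,k)\equiv(\forall n\in\N)(\exists p,q\in\Q\cap(a,b))\big(|p-q|<2^{-n}\wedge|f(p)-f(q)|>2^{-k}-2^{-n}\big),
\]
which is $\Pi^{0}_{2}$ in $\lambda r.f(r)$. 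I claim $P(a,b,k)$ holds \emph{iff} there is $x\in[a,b]$ with $|f(x+)-f(x-)|\geq 2^{-k}$. The forward direction is immediate by straddling a jump point with close rationals. For the reverse, one takes close rational pairs $p_{n},q_{n}$ witnessing $P$, passes to a convergent subsequence $p_{n},q_{n}\to x^{\*}$ by compactness of $[a,b]$, and uses that $f$ is \emph{regulated}: if infinitely many pairs lay on one side of $x^{\*}$ then $|f(p_{n})-f(q_{n})|\to0$ by existence of the one-sided limit, a contradiction; hence infinitely many pairs straddle $x^{\*}$, forcing $|f(x^{\*}+)-f(x^{\*}-)|\geq 2^{-k}$. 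Because $P$ is arithmetical and strict real comparison is $\Sigma^{0}_{1}$, $\exists^{2}$ decides $P(a,b,k)$ for any given rational $a,b$ and any $k$.

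With this in hand the algorithm is routine. For fixed $k$ and resolution $n$, use $\mu^{2}$ to run over the finitely many cells of a rational grid of mesh $2^{-n}$ on $[0,1]$ and mark those cells on which $P$ holds; by the finiteness fact and the detection equivalence, for $n$ large enough these cells are pairwise disjoint and each contains exactly one jump point of size $\geq 2^{-k}$, so refining $n$ produces a Cauchy code (with explicit modulus) for each such point. Dovetailing over $k$ and $n$ then yields an enumeration of all $x$ with $f(x+)\ne f(x-)$, all definable from $f$ using $\exists^{2}$ only. The main obstacle is the reverse direction of the detection equivalence together with the threshold and boundary bookkeeping: one must choose the gap (here $2^{-k}$ versus $2^{-k}-2^{-n}$) and whether to work with open or half-open cells carefully, so that the $\Pi^{0}_{2}$ predicate neither overcounts drifting near-jumps (ruled out precisely by regulatedness) nor misses a jump sitting on a grid endpoint; everything else is an arithmetical search that $\exists^{2}$ performs directly, with no need for $\SS^{2}$ since no genuine $\Sigma_{1}^{1}$ quantifier over reals survives the reduction to rational sampling.
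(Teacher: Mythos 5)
Your overall strategy (reduce to detecting, via an arithmetical predicate on the restriction of $f$ to $\Q$, the finitely many jumps of size at least $2^{-k}$, then dovetail over $k$) is the natural one; note also that the paper itself offers no proof to compare against, since Theorem \ref{renk} is imported from \cite{dagsamXIV}. However, your crucial detection equivalence is false, and the error sits exactly in the case analysis of your reverse direction. You argue that if the witnessing pairs $(p_{n},q_{n})$ accumulate at $x^{*}$, then either infinitely many pairs lie on one side of $x^{*}$ (impossible by existence of one-sided limits) or infinitely many straddle $x^{*}$ (forcing a jump). You omit the third case: one member of the pair can equal $x^{*}$ itself, which happens precisely when $x^{*}$ is rational. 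Concretely, take $c\in \Q\cap(a,b)$ and $f=\mathbb{1}_{\{c\}}$, which is regulated (indeed $BV$). For every $n$ the pair $p=c$ and $q\in \Q\cap(a,b)$ with $0<|c-q|<2^{-n}$ gives $|f(p)-f(q)|=1>2^{-k}-2^{-n}$, so $P(a,b,k)$ holds for all $k\geq 0$; yet $f(x+)=f(x-)=0$ for every $x$, so $f$ has no jump discontinuities whatsoever. Thus $P$ is triggered by removable discontinuities at rational points, your claim that for large $n$ each marked cell contains exactly one jump point fails, and the enumeration your algorithm produces contains points that are not jump discontinuities, contrary to the statement, which asks for the points with $f(x+)\ne f(x-)$. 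Regulatedness does not rule out this overcounting, contrary to your closing remark that it does.

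The gap is repairable, but a genuine extra step is needed: after the dovetailing produces a candidate list (which does contain all jump points, granting the grid-boundary fix you gesture at, e.g.\ using cells centred at every rational so that each point of $[0,1]$ is interior to arbitrarily small cells), you must filter it. For any given real $x$, $\exists^{2}$ computes $f(x+)$ and $f(x-)$, since these are limits along rational sequences that are guaranteed to exist, so a modulus of convergence can be found by arithmetical search; deciding $f(x+)\ne_{\R} f(x-)$ is then also arithmetical, hence available from $\exists^{2}$. Running this test on each candidate and discarding the failures yields the desired enumeration. Without this filtering step, or an alternative detection predicate immune to rational removable discontinuities, the proof as written does not establish the theorem.
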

We now have the following theorem based on Theorem \ref{renk}.  The two final conditions often occur in the study of Fourier series (see e.g.\ \cite{meerwater,voordedorst}).
\begin{thm}\label{kok}
The functionals from Theorem \ref{bvt1} are computable by $\exists^{2}$ when restricted to the following classes:
\begin{itemize}
\item  \emph{normalised} $BV$-functions,
\item  quasicontinuous BV functions,
\item  $BV$-functions satisfying $\min(f(x-),f(x+))\leq f(x)\leq \max( f(x-), f(x+))$,
\item  $BV$-functions satisfying $f(x)=\frac{f(x+)+f(x-)}{2}$ for $x\in (0,1)$.
\end{itemize}
\end{thm}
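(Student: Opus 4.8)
The plan is to treat all four classes uniformly, reducing every item of Theorem~\ref{bvt1} to two computable primitives: the enumeration of the jump set provided by Theorem~\ref{renk}, and the one-sided limits $f(x\pm)$, both available from $\exists^{2}$. The crucial observation is that each of the four restrictions forces $f(x)$ to lie in the closed interval with endpoints $f(x-)$ and $f(x+)$: trivially for the trapping and averaging conditions, while normalised $BV$ gives $f(x)=f(x+)$, and quasi-continuity of a regulated function forces the value onto one of the one-sided limits. I would exploit two consequences throughout. First, there are \emph{no removable discontinuities}: if $f(x-)=f(x+)$ then $f(x)$ equals this common value, so $f$ is continuous at $x$; hence $D_{f}$ is exactly the enumerable jump set of Theorem~\ref{renk} and $C_{f}=[0,1]\setminus D_{f}$. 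Second, the value at a point contributes \emph{no extra variation}, i.e.\ $|f(x-)-f(x)|+|f(x)-f(x+)|=|f(x-)-f(x+)|$ since $f(x)$ lies between $f(x-)$ and $f(x+)$.

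From here I would dispatch the individual items. For the supremum (item~\eqref{bv1}) the trapping property yields $f(y)\leq\max(f(y-),f(y+))\leq\sup_{r\in[p,q]\cap\Q}f(r)$ for interior $y$, so (adjusting for the directly computable values at rational endpoints) $\sup_{x\in[p,q]}f(x)=\sup_{r\in[p,q]\cap\Q}f(r)$, which $\exists^{2}$ computes by the interval-halving technique of Theorem~\ref{plonkook}. Finding a point of continuity (item~\eqref{bv3}) amounts to producing a real distinct from every member of the enumerated sequence $D_{f}=(x_{n})_{n\in\N}$; this is the effective content of Cantor's diagonal argument, or equivalently follows from the Baire category theorem for R2-representations (\cite{dagsamVII}*{Theorem~7.10}) applied to the open dense sets $[0,1]\setminus\{x_{n}\}$. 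Items~\eqref{bv4}, \eqref{bv5} and~\eqref{bv6} then reduce to this: at a continuity point the Fourier series of a $BV$-function converges to $f(x)$ (indeed the averaging condition makes this hold everywhere); $F(x):=\int_{0}^{x}f$ is differentiable with $F'=f$ by the epsilon-delta argument of Theorem~\ref{plonkook}\eqref{q9}; and for the non-negative witnesses, $\int_{0}^{1}f=0$ forces $f=0$ at any continuity point.

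The heart of the argument is the Jordan decomposition (item~\eqref{bv2}) and the two moduli (items~\eqref{bv7}, \eqref{bv8}), which all hinge on computing the total-variation function $x\mapsto V_{0}^{x}(f)$. Here I would use the no-extra-variation identity to show that the supremum in~\eqref{tomb} over arbitrary partitions equals the supremum over \emph{rational} partitions (together with the enumerated jump at the right endpoint): replacing any partition point $z$ by straddling rationals loses exactly $|f(z-)-f(z)|+|f(z)-f(z+)|-|f(z-)-f(z+)|=0$. Thus $V_{0}^{x}(f)$ is computable in $\exists^{2}$, and the classical formulas $g=\tfrac12(V_{0}^{x}(f)+f)$ and $h=\tfrac12(V_{0}^{x}(f)-f)$ furnish the non-decreasing decomposition. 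For the moduli, the computability of $V_{0}^{1}(f)$ lets me locate, for each $k$, the \emph{finitely many} jumps of size at least $2^{-k}$ (the tail being bounded by the total variation); avoiding these finitely many large jumps then yields a modulus of continuity at a continuity point and a modulus of regulation at every point.

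I expect the main obstacle to be the total-variation computation underlying items~\eqref{bv2}, \eqref{bv7} and~\eqref{bv8}. The subtle point is not the pointwise identity but its \emph{uniform} deployment: one must certify that restricting to rational partitions incurs no loss simultaneously at all (possibly irrational) partition points, and that the right endpoint $x$ of $[0,x]$, which may be irrational and may itself be a jump, is handled correctly via the left limit $f(x-)$ and the enumerated jump data. Once the total-variation function is secured, the quasi-continuous case folds into Theorem~\ref{plonkook} for the continuity-type items, and the remaining items follow as above.
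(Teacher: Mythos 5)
Your overall route is the same as the paper's: each of the four conditions rules out removable discontinuities (indeed forces the trapping property), so Theorem \ref{renk} enumerates \emph{all} of $D_{f}$; from this enumeration one computes $V_{0}^{x}(f)$ and the suprema by reducing them to countable data, obtains the Jordan decomposition, produces a continuity point by Cantor's diagonal argument, and settles the Fourier, integral-zero and FTC items at that point. Your rational-partition argument via the identity $|f(z-)-f(z)|+|f(z)-f(z+)|=|f(z-)-f(z+)|$ is a nice self-contained justification of the step the paper phrases as ``the supremum over $\R$ in \eqref{tomb} can be replaced by a supremum over $\N\times\Q$''.

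However, your treatment of the two moduli, items \eqref{bv7} and \eqref{bv8} of Theorem \ref{bvt1}, has a genuine gap: locating and then avoiding the finitely many jumps of size at least $2^{-k}$ cannot by itself yield either modulus, because jump data say nothing about how steeply $f$ varies \emph{continuously}. Concretely, take the tent function $f_{\delta}(y):=\max(0,\,1-|y-\tfrac12|/\delta)$ for small $\delta>0$: it lies in all four classes, is in $BV$ with variation $2$, and has no jumps at all, so your recipe imposes no constraint whatsoever; yet any modulus of continuity at the continuity point $x=\tfrac12$ must return a radius of order $\delta\cdot 2^{-k}$, which depends on $\delta$, i.e.\ on actual function values rather than on jump locations. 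The same example defeats the proposed modulus of regulation. The repair is standard and uses only tools you already deploy elsewhere: given the enumeration $(x_{n})_{n\in\N}$ of $D_{f}$, the predicate $(\forall y\in B(x,2^{-N}))(|f(x)-f(y)|<2^{-k})$ is equivalent to its restriction to $y\in(\Q\cup\{x_{n}:n\in\N\})\cap B(x,2^{-N})$, since values at continuity points are limits of values at rationals; the predicate is therefore arithmetical in the given data, and Feferman's $\mu^{2}$ returns the least such $N$, which exists whenever $x\in C_{f}$ (as Definition \ref{modje} requires). The one-sided analogue with $f(x+)$ and $f(x-)$ gives the modulus of regulation at every point; this is essentially how the proof of \cite{dagsamXIV}*{Cor 2.17}, which the paper cites for item \eqref{bv8}, proceeds.
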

\begin{proof}
The conditions from the theorem guarantee that the function $f:[0,1]\di \R$ at hand does not have \emph{removable discontinuities}; the latter are reals $x$ such that: 
\[
f(x+)=f(x-) \textup{ and } f(x)\ne f(x+).
\] 
Since $BV$-functions are regulated, Theorem \ref{renk} provides an enumeration $(x_{n})_{n\in \N}$ of the points where $f$ is discontinuous, assuming the conditions from the theorem.  
With this enumeration, one readily computes $V_{0}^{x}(f)$ as the supremum over $\R$ in \eqref{tomb} can be replaced by a supremum over $\N\times \Q$.
Then $g(x):=\lambda x. V_{0}^{x}(f)$ and $h(x)=g(x)-f(x)$ are increasing and as required for the Jordan decomposition theorem. 
The supremum of $f$ on $[0,1]$ is now also readily computed as we only need to consider the values of $f$ on $\Q$ and $(x_{n})_{n\in \N}$.  
The Cantor diagonal argument yields a point of continuity of $f$, which is also a point where the Fourier series converges to the function value and also a point where the fundamental theorem of calculus holds. 
The same holds for item \eqref{bv4} of Theorem \ref{bvt1}, and we are done. 
A modulus of regulation is defined in the proof of \cite{dagsamXIV}*{Cor 2.17} using $\exists^{2}$. 
\end{proof}
We could similarly study the Federer decomposition (see \cite{voordedorst}*{Theorem 1.28}).

\medskip

%
%
\noindent
Finally, we speculate on the underlying explanation for the above results. 
\begin{rem}[Explaining the abyss]\label{expla}\rm
We believe to have convinced the reader of the existence of an abyss in Kleene computability theory in the sense that mathematically close operations can vary tremendously in terms of computational hardness.
An obvious question is now what underlying phenomenon causes this abyss into existence.  We believe to have an \emph{intuitive} explanation as follows.
For continuous $f:\R\di \R$, the following approximation device follows `by definition':
\begin{center}
\emph{ for fixed $x\in \R$, the real $f(x)$ can be approximated using only $f(q)$ for all $q\in \Q$.}
\end{center}
The centred observation is nothing new and actually the justification for the coding of continuous functions in the language of second-order arithmetic.
An intrepid mind might enquire whether there are \emph{discontinuous} functions for which the centred observation holds.  
We shall use the term\footnote{The English language includes the modifier `ish', meaning `sort of'.  For instance, `green-ish' means that the observed colour is similar to green, but perhaps not quite the same.} \emph{second-order-ish} to describe any function (class) that satisfies the centred observation.  
Now, the following classes are second-order-ish in nature, assuming representations for Baire functions:
\begin{center}
\emph{cadlag, normalised $BV$, quasi-continuous, Baire 1, and effectively Baire 2}.
\end{center}
By contrast, the following function classes are \textbf{not} second-order-ish:
\begin{center}
\emph{bounded variation \(BV\), regulated, semi-continuous, cliquish, and Baire 2.}
\end{center}
The above results imply that basic properties of second-order-ish functions can be witnessed using $\exists^{2}$ (and sometimes $\SS^{2}$ seems needed).  
For \textbf{non-}second-order-ish function classes, $\exists^{3}$ can establish the same properties, but no weaker oracle (like $\SS_{k}^{2}$ which decides the second-order formula class $\Pi_{k}^{1}$) suffices. 

\medskip

Finally, the computational and mathematical properties often `line up': quasi-continuous functions are cliquish while the former (resp.\ latter) is on the $\exists^{2}$-side and second-order-ish (resp.\ $\exists^{3}$-side and non-second-order-ish) of the abyss.
The same holds for e.g.\ effectively Baire 2 versus Baire 2 and cadlag versus regulated. 
There are however plenty of examples of non-second-order-ish classes (regulate, $BV$, usco) that are sub-classes of second-order-ish ones (Baire 1). 

\end{rem}

\begin{ack}\rm
We thank Anil Nerode for his valuable advice and discussions related to this topic.  We are grateful for the many useful comments and suggestions by the two anonymous referees that have greatly improved the paper.  
This research was supported by the \emph{Deutsche Forschungsgemeinschaft} (DFG) (grant nr.\ SA3418/1-1) and the \emph{Klaus Tschira Boost Fund} (grant nr.\ GSO/KT 43).  
The main idea of this paper was conceived while reading \cite{damurm}, following discussions on higher-order RM with Carl Mummert.  
\end{ack}

\begin{bibdiv}
\begin{biblist}
\bibselect{allkeida}
\end{biblist}
\end{bibdiv}

\bye

\bye